\documentclass[11pt]{amsart}
\usepackage[parfill]{parskip}    
\usepackage{fullpage}
\usepackage{graphicx}
\usepackage{amssymb,amsmath,latexsym}
\usepackage{tikz}
\usetikzlibrary{arrows,decorations.markings,decorations.pathreplacing,calc,matrix,intersections}
\tikzset{->-/.style={decoration={markings,mark=at position #1 with {\arrow{>}}},postaction={decorate}}}

\usepackage{epstopdf}
\usepackage{ifpdf}
\usepackage{color}
\usepackage{float}
\usepackage{amscd,amsmath}
\usepackage{amssymb,amsmath,latexsym,color,enumerate,tikz}
\usepackage[all]{xypic}       
\usepackage[varg]{pxfonts}
\usepackage{amsmath,amsthm,amsfonts,amssymb,fancyhdr,graphics,relsize,tikz-cd,mathtools,faktor,tikz,changepage}
\usepackage{graphicx}
\usepackage{placeins}

\usepackage{dsfont}
\DeclareGraphicsRule{.tif}{png}{.png}{`convert #1 `dirname #1`/`basename #1.tif`.png}

\definecolor{red}{rgb}{1,0,0} 

 \definecolor{darkgreen}{rgb}{0, .7, 0}

 \definecolor{purple}{rgb}{.7, 0, 1}



\tikzset{mynode/.style={draw,circle,fill=black,inner sep=2pt,outer sep=0.5pt}}


\newtheorem*{section5}{Theorem~\ref{Theorem Section 5}}
\newtheorem*{section6}{Theorem~\ref{Theorem Section 6}}
\newtheorem*{section7}{Theorem~\ref{Theorem Section 7}}
\newtheorem*{section10}{Theorem~\ref{Theorem Section 10}}
\newtheorem*{theoremC}{Theorem~\ref{Theorem C}}
\newtheorem*{theoremB}{Theorem~\ref{Theorem B}}
\newtheorem*{theoremA}{Theorem~\ref{Theorem A}}
\newtheorem*{theoremend}{Theorem~\ref{Theorem end}}
\newtheorem*{retract}{Theorem~\ref{Theorem Retract}}
\newtheorem*{separability}{Theorem~\ref{Theorem Separability}}
\newtheorem*{theoremconjugacy}{Theorem~\ref{Theorem Conjugacy}}

\newtheorem{theorem}{Theorem}[section]
\newtheorem*{theorem*}{Theorem}
\newtheorem*{lemma*}{Lemma}
\newtheorem{proposition}[theorem]{Proposition}
\newtheorem{lemma}[theorem]{Lemma}
\newtheorem{corollary}[theorem]{Corollary}

\theoremstyle{definition}
\newtheorem{definition}[theorem]{Definition}

\newtheorem{property}[theorem]{Property}
\theoremstyle{remark}
\newtheorem{remark}[theorem]{Remark}

\usepackage{hyperref}

\title{Subgroups of direct products of limit groups over Droms RAAGs}
\author{Jone Lopez de Gamiz Zearra}

\begin{document}
\maketitle

\begin{abstract}
A result of Bridson, Howie, Miller and Short states that if $S$ is a subgroup of type $FP_{n}(\mathbb{Q})$ of the direct product of $n$ limit groups over free groups, then $S$ is virtually the direct product of limit groups over free groups. Furthermore, they characterise finitely presented residually free groups. In this paper these results are generalised to limit groups over Droms right-angled Artin groups. Droms RAAGs are the right-angled Artin groups with the property that all of their finitely generated subgroups are again RAAGs. In addition, we show that the generalised conjugacy problem is solvable for finitely presented groups that are residually a Droms RAAG and that their finitely presentable subgroups are separable.
\end{abstract}
 
\section{Introduction}
\label{Section 1}

In 1984 Baumslag and Roseblade characterized finitely presented subgroups of the direct product of two finitely generated free groups, showing that up to finite index they are themselves a direct product of free groups. This result was generalized in a series of papers by Bridson, Howie, Miller and Short, culminating in a characterization of subgroups of the direct product of finitely many limit groups over free groups, assuming that the subgroups satisfy suitable finiteness properties.

Limit groups over free groups generalize free groups, and they were developed by Kharlmapovich-Miasnikov and Sela as a tool for studying solutions of systems of equations over groups. When Baumslag, Miasnikov and Remeslennikov laid down the foundations of algebraic geometry over groups, they showed that the study of a system of equations over $G$ can be reduced to the study of finitely generated subgroups of the direct product of limit groups over $G$.

The class of right-angled Artin groups (RAAGs) extends the class of finitely generated free groups, by allowing relations saying that some of the generators commute. The present work is the first in a series that aims to study the structure of systems of equations over RAAGs. More precisely, following the work of Bridson, Howie, Miller and Short, the goal is to describe the structure of subgroups of the direct product of limit groups over RAAGs. In this paper, we begin by studying limit groups over Droms RAAGs. Droms RAAGs are those RAAGs with the property that all of their finitely generated subgroups are again RAAGs.

We prove the following theorem:
  
\begin{theoremA}
If $\Gamma_{1},\dots, \Gamma_{n}$ are limit groups over Droms RAAGs and $S$ is a subgroup of $\Gamma_{1}\times \cdots \times \Gamma_{n}$ of type $FP_{n}(\mathbb{Q})$, then $S$ is virtually a direct product of limit groups over Droms RAAGs.
\end{theoremA}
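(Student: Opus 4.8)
\emph{Sketch of proof.}
The plan is to run the inductive argument of Bridson--Howie--Miller--Short for direct products of limit groups over free groups, substituting throughout the structural properties of limit groups over Droms RAAGs developed above. The inputs we rely on are: (i) a finitely generated subgroup of a limit group over a Droms RAAG is again a limit group over a Droms RAAG; (ii) limit groups over Droms RAAGs are coherent and of type $F_\infty$, and carry the abelian/cyclic JSJ-type decompositions over their Droms RAAG vertex groups that render the fibre product theorems (the $n$-$(n+1)$-$(n+2)$ theorem and its asymmetric version) and the virtual-surjection-to-pairs machinery applicable over $\Q$; and (iii) the structure theorem for full subdirect products established above: if $\Gamma_1,\dots,\Gamma_k$ are limit groups over Droms RAAGs and $T\le\Gamma_1\times\cdots\times\Gamma_k$ is a \emph{full} subdirect product (so $T$ surjects onto each $\Gamma_i$ and $T\cap\Gamma_i\ne 1$ for all $i$) of type $FP_k(\Q)$, then $T$ is virtually a direct product of limit groups over Droms RAAGs --- indeed, if all the $\Gamma_i$ are non-abelian then $T$ has finite index in $\Gamma_1\times\cdots\times\Gamma_k$. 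For $k=2$ this last assertion is the Droms RAAG analogue of the Baumslag--Roseblade theorem, and the general case is its higher-rank homological strengthening.

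Granting (i)--(iii), Theorem~A follows by a short induction on $n$. For $n=1$, a subgroup of type $FP_1(\Q)$ is finitely generated, hence a limit group over a Droms RAAG by (i), so there is nothing to prove. For $n\ge 2$, discard any trivial factor and replace each $\Gamma_i$ by the image $p_i(S)\le\Gamma_i$ of the coordinate projection $p_i\colon S\to\Gamma_i$; this subgroup is finitely generated, hence a limit group over a Droms RAAG by (i), and the replacement makes $S$ a subdirect product without changing $S$. If $S\cap\Gamma_i=1$ for some $i$, then projecting off the $i$-th coordinate embeds $S$ into a product of $n-1$ limit groups over Droms RAAGs, over which $S$ is still of type $FP_{n-1}(\Q)$, so the inductive hypothesis applies. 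Otherwise $S$ is a full subdirect product of type $FP_n(\Q)$, and (iii) applies directly. Keeping track of the number of factors at each step shows, as over free groups, that $S$ is virtually a direct product of at most $n$ limit groups over Droms RAAGs.

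The substance of the argument --- and the only point at which the Droms hypothesis on the RAAGs is genuinely used --- lies in establishing the inputs (i)--(iii). The proof of (iii) proceeds, as in the free case, by induction on $k$: the base case $k=2$ is the Baumslag--Roseblade-type analysis of subdirect products of two limit groups over Droms RAAGs, while the inductive step examines the normal subgroup $L=T\cap\Gamma_k$ of $\Gamma_k$ by means of the Lyndon--Hochschild--Serre spectral sequence with $\Q$-coefficients, splitting into the cases where $L$ is trivial, of finite index, or ``thin'' --- in each of which one either reduces the number of factors or contradicts the $FP_k(\Q)$ hypothesis. This requires precise control of the low-dimensional $\Q$-homology of limit groups over Droms RAAGs and of their normal subgroups, and of the structure of subdirect products of two such groups; this is exactly where one uses that Droms RAAGs are coherent and contain no copy of $F_2\times F_2$, the pathology that obstructs the entire approach for general RAAGs. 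I expect this to be the main obstacle: once the hierarchical (graph-of-groups over Droms RAAG vertex groups) structure of limit groups over Droms RAAGs has been exploited to secure these homological and structural facts, the rest of the proof is a faithful transcription of the limit-groups-over-free-groups case.
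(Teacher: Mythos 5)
Your high-level strategy --- reduce to a full subdirect product by replacing each $\Gamma_i$ with $p_i(S)$ and projecting off coordinates with trivial intersection, then invoke a structure theorem for full subdirect products of type $FP_n(\Q)$ --- is exactly the route the paper takes, and the structure theorem you call (iii) corresponds to the paper's Theorem~\ref{Theorem C} together with the reductions in Proposition~\ref{Proposition 2}. However, there is a genuine gap in how you state (iii) and consequently in the reduction. You write that if all the $\Gamma_i$ are \emph{non-abelian} then $T$ has finite index. For limit groups over free groups this is fine, because a non-abelian limit group over a free group automatically has trivial center. For limit groups over Droms RAAGs it is \emph{false}: $\mathbb{Z}\times F_2$ is itself a Droms RAAG, hence a limit group over a Droms RAAG, it is non-abelian, and it has non-trivial center. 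Taking $\Gamma_1=\Gamma_2=\mathbb{Z}\times F_2$ and letting $T$ be the fibre product over the coordinate map to $\mathbb{Z}$, one gets a full subdirect product $T\cong\mathbb{Z}\times F_2\times F_2$ which is of type $F_\infty$ but of infinite index in $\Gamma_1\times\Gamma_2$. So the correct hypothesis is \emph{trivial center}, not \emph{non-abelian}, and this is not a cosmetic distinction: it forces an extra reduction step. The paper performs it in Proposition~\ref{Proposition 2}(4) by writing $\Gamma_i\cong\mathbb{Z}^{m_i}\times\Gamma_i'$ with $\Gamma_i'$ centerless, splitting $S$ as $(S\cap(\Gamma_1'\times\cdots\times\Gamma_n'))\times\pi(S)$ with $\pi$ the projection to the abelian part, and observing the first factor inherits $FP_n(\Q)$. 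Your reduction (``discard trivial factors, replace $\Gamma_i$ by $p_i(S)$, project off coordinates with trivial intersection'') never produces this splitting, so after your reductions $S$ may be a full subdirect product of limit groups with non-trivial center, to which your (iii) does not apply and to which the homological contradiction (infinite-dimensional $H_j$) does not go through.

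Beyond that, the way you sketch the proof of (iii) diverges somewhat from the paper. You invoke the fibre-product/$n$--$(n+1)$--$(n+2)$ machinery and a case split on $L=T\cap\Gamma_k$; the paper's Theorem~\ref{Theorem C} instead runs through three intermediate results: a normal-subgroup/finite-index theorem in the class $\mathcal{C}$ (Theorem~\ref{Theorem Section 5}), the virtual nilpotence of $\Gamma_j/L_j$ (Theorem~\ref{Theorem Section 6}), and the infinite-dimensionality of some $H_k$ of the kernel of a map to $\mathbb{Z}$ (Theorem~\ref{Theorem Section 7}), combined via Corollary~\ref{Nilpotent 2} and a small spectral-sequence lemma. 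It also uses a further reduction, Proposition~\ref{Proposition 2}(5), replacing each $\Gamma_i$ by a finite-index subgroup that is an HNN extension over a cyclic or trivial group with stable letter in $L_i$ --- this step, together with the cyclic-subgroup-separability and finiteness properties you list under (ii), is where the hierarchical structure of limit groups over Droms RAAGs is actually used. Your plan would likely succeed once the trivial-center reduction and the HNN-splitting reduction are inserted, but as written it would not compile into a proof: the counterexample above shows the inductive application of (iii) fails without the centerless normalization.
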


We also prove that if a subgroup of the direct product of $n$ limit groups over Droms RAAGs is not of type $FP_{n}(\mathbb{Q})$, then it has a subgroup with an infinite dimensional homology group:

\begin{theoremB}
Let $\Gamma_{1},\dots, \Gamma_{n}$ be limit groups over Droms RAAGs. Let $S$ be a finitely generated subgroup in $\Gamma_{1}\times \cdots \times \Gamma_{n}$ and set $L_{i}= S \cap \Gamma_{i}$ for $i\in \{1,\dots,n\}$.

If $L_{i}$ is finitely generated for $1 \leq i \leq r$ and not finitely generated for $i>r$, then there is a subgroup of finite index $S_{0}<S$ such that $S_{0}= S_{1}\times S_{2}$, where\\[5pt]
(1) $S_{1}$ is the direct product of limit groups over Droms RAAGs,\\[3pt]
(2) If $n>r$, $H_{k}(S_{2};\mathbb{Q})$ is infinite dimensional for some $k\leq n-r$.
\end{theoremB}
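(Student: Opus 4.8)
The plan is to adapt, to the class of limit groups over Droms RAAGs, the strategy used over free groups by Bridson, Howie, Miller and Short; the new ingredients are a structure theory for finitely generated normal subgroups of such groups and a homological analysis of their subdirect products. As a first step I would reduce to the subdirect case: replacing each $\Gamma_i$ by its image $p_i(S)$ under the $i$-th coordinate projection alters neither $S$ nor any $L_i=S\cap\Gamma_i$, and $p_i(S)$ is again a limit group over a Droms RAAG, since finitely generated subgroups of such groups are again of this kind. So I may assume $p_i(S)=\Gamma_i$ for all $i$. I then pass freely to finite-index subgroups of $S$ and of the $\Gamma_i$ and to suitable ``full'' subproducts (none of this affects the statement), arranging in the process that the decomposition produced below is reduced: the abelian and centre-like directions of the $\Gamma_i$---which have no analogue over free groups but occur already in $\mathbb{Z}^2$ or in the centre of $F_2\times\mathbb{Z}$---are absorbed into the ``good'' factor. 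Under these normalisations the hypothesis that $L_1,\dots,L_r$ are finitely generated while $L_{r+1},\dots,L_n$ are not is closely tied to the ``virtual surjection to pairs'' condition that organises the free case.

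To split off the good factor, note that each $L_i$ is normal in $S$ and in $\Gamma_i$ and that the $L_i$ pairwise commute, so $P:=L_1\cdots L_r=L_1\times\cdots\times L_r$ is a normal subgroup of $S$ which is a direct product of at most $r$ limit groups over Droms RAAGs. I would show that, up to finite index, $P$ splits off as a direct factor: there is a finite-index subgroup $S_0=S_1\times S_2$ with $S_1$ containing $P$ with finite index (so that, after a further finite-index reduction, $S_1$ is a direct product of limit groups over Droms RAAGs), while $S_2$ projects onto each of the remaining factors $\Gamma_{r+1},\dots,\Gamma_n$. The tool is the structure of a nontrivial finitely generated normal subgroup $N$ of a limit group $\Gamma$ over a Droms RAAG: a finite-index subgroup of $\Gamma$ splits as $N_0\times C$ with $N_0\le N$ of finite index---the correct replacement here for the free-group fact that a nontrivial finitely generated normal subgroup has finite index. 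Applying this to $L_i\triangleleft\Gamma_i$ for $i\le r$, assembling the complements coordinatewise, and intersecting with $S$ produces the splitting; the normalisations of the first step are what prevent anything further from the first $r$ coordinates from landing in $S_2$.

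It remains to treat $S_2$ when $n>r$. After the reductions $S_2$ is a finitely generated subdirect product of $m\le n-r$ limit groups over Droms RAAGs all of whose intersections with the factors are infinitely generated (if $S_2$ had instead turned out to be a direct product of limit groups it would have been absorbed into $S_1$, and $S$ would be virtually such a product). I would show that $H_k(S_2;\mathbb{Q})$ is infinite-dimensional for some $k\le m\le n-r$ by induction on $m$, mimicking the homological analysis of the Stallings--Bieri groups. Forgetting one coordinate gives a short exact sequence $1\to K\to S_2\to\bar S_2\to1$ in which $K$ is commensurable with one of the infinitely generated intersections and $\bar S_2$, after renormalising, is a subdirect product of $m-1$ of the factors; feeding the infinite-dimensionality of the relevant low-degree homology---that of $K$ in the base step, and that of $\bar S_2$ given by the inductive hypothesis in general---into the Lyndon--Hochschild--Serre spectral sequence (equivalently, a Mayer--Vietoris sequence coming from the fibre-product / graph-of-groups picture of $S_2$) yields an infinite-dimensional $H_k(S_2;\mathbb{Q})$ in the required range of degrees. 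Together with the previous step this gives the finite-index subgroup $S_0=S_1\times S_2$ with the asserted properties.

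I expect this last step to be the main obstacle. Over free groups the Stallings--Bieri computation rests on very rigid features---free groups are one-dimensional, their subgroups are free, and the ascending and descending links in products of graphs are spheres---whereas limit groups over Droms RAAGs carry a rich hierarchical (JSJ) structure and many abelian subgroups, so the delicate point is to verify that the infinite-dimensional contribution of an infinitely generated kernel genuinely survives: that it is killed neither by differentials in the spectral sequence nor by passage to coinvariants under the (possibly large) complementary quotient, and that the bookkeeping of ``full'' subproducts can be kept consistent through the induction. A second, Droms-specific difficulty already surfaces when extracting the good factor: unlike over free groups, here there are many infinite-index finitely generated normal subgroups, so the peeling-off must proceed by tracking direct-factor complements rather than by a clean trivial-or-finite-index dichotomy.
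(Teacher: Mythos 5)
Your overall architecture (peel off a direct factor coming from $L_1,\dots,L_r$, then analyse the remaining subdirect product homologically) matches the paper's, but there are two concrete gaps in the execution. First, you dismiss the passage from the $\Gamma_i$ to their trivial-center quotients $\Gamma_i'$ as routine bookkeeping (``none of this affects the statement''), whereas the paper devotes roughly half of Section~9 to verifying precisely that this reduction is harmless: writing $\Gamma_i=\mathbb{Z}^{m_i}\times\Gamma_i'$ and $\pi$ for the projection killing all the $\mathbb{Z}^{m_i}$, one must show that $\pi(S)\cap\Gamma_i'$ is finitely generated if and only if $S\cap\Gamma_i$ is. The paper does this by exhibiting an explicit isomorphism $\pi(S)\cap\Gamma_i'\cong\pi(S\cap\Gamma_i)\times A$ with $A$ abelian, and this is exactly the kind of statement your sketch needs but never establishes. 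Second, in place of the paper's Theorem~\ref{Theorem Section 5} (a nontrivial finitely generated normal subgroup of a trivial-center limit group over a Droms RAAG has finite index), you posit a stronger, unproved ``tool'': that a nontrivial finitely generated normal subgroup of an arbitrary limit group over a Droms RAAG is a virtual direct factor. This is not a known fact, you offer no proof of it, and it is precisely the point that the trivial-center reduction is designed to sidestep---after that reduction the splitting $\hat S_0=\Delta\times\hat S_2$ with $\Delta=L_1\times\cdots\times L_r$ is automatic because $\Delta\leq S$, so no such structure theorem is needed.

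A smaller but still real issue is in the last step. The infinite-dimensionality of some $H_k(S_2;\mathbb{Q})$ with $k\leq n-r$ is exactly the content of Theorem~\ref{Theorem C}, applied to the full subdirect product $\hat S_2<\Gamma_{r+1}'\times\cdots\times\Gamma_n'$ (it has infinite index because each $L_i$ with $i>r$ is infinitely generated, hence of infinite index). You instead propose to rederive this by an induction modelled on Stallings--Bieri and identify it as ``the main obstacle''; but that would amount to reproving the core of Sections~5--8, and the delicate spectral-sequence bookkeeping you worry about (differentials, coinvariants under a large quotient) is exactly what Theorems~\ref{Theorem Section 6} and~\ref{Theorem Section 7} and the nilpotent-quotient machinery are engineered to control. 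So the correct move at this stage is a citation, not a new computation; without one your argument leaves the central analytic step of the theorem unproved.
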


Theorem~\ref{Theorem A} and Theorem~\ref{Theorem B} are the analogues of Theorem A and Theorem B of \cite{something1}. Following the spirit of \cite{something1}, we deduce both theorems from the following result. Recall that a subgroup of a direct product is a \emph{subdirect product} if the projection to each factor is surjective and it is \emph{full} if it intersects each factor non-trivially.

\begin{theoremC}
Let $\Gamma_{1},\dots,\Gamma_{n}$ be limit groups over Droms RAAGs with trivial center and let $S< \Gamma_{1}\times \cdots \times \Gamma_{n}$ be a finitely generated full subdirect product. Then either:\\[3pt]
(1) $S$ is of finite index; or\\[3pt]
(2) $S$ is of infinite index and has a finite index subgroup $S_{0}<S$ such that $H_{j}(S_{0}; \mathbb{Q})$ has infinite dimension for some $j\leq n$.
\end{theoremC}

By \cite{Baumslag}, groups that are residually a Droms RAAG are precisely finitely generated subgroups of the direct product of limit groups over Droms RAAGs, so Theorem~\ref{Theorem A} gives us the following result:

\begin{corollary}
For all $n\in \mathbb{N}$, a group that is residually a Droms RAAG is of type $F_{n}$ if and only if it is of type $FP_{n}(\mathbb{Q})$.
\end{corollary}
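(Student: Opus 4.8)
The plan is to deduce the corollary from Theorem~\ref{Theorem A} and Theorem~\ref{Theorem B} together with the structure theory behind them, combined with the standard relations among the finiteness properties $F_n$ and $FP_n$.

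One implication is formal: for any group and any nonzero ring $R$, type $F_n$ implies type $FP_n(R)$, so in particular type $F_n$ implies type $FP_n(\mathbb{Q})$; this I would simply recall. For the converse, let $G$ be residually a Droms RAAG and of type $FP_n(\mathbb{Q})$. The case $n=0$ is vacuous, so assume $n\geq 1$; then $G$ is finitely generated, and by \cite{Baumslag} it embeds in a direct product $\Gamma_1\times\cdots\times\Gamma_m$ of limit groups over Droms RAAGs. Replacing each $\Gamma_i$ by the subgroup generated by the image of $G$ under the $i$-th projection --- a finitely generated subgroup of a limit group over a Droms RAAG, hence again such a limit group --- and then deleting the factors that $G$ meets trivially, I may assume $G$ is a finitely generated full subdirect product of the $\Gamma_i$. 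Since $F_n$ and $FP_n(\mathbb{Q})$ are commensurability invariants, everything below may be done modulo passage to finite-index subgroups.

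I would then run the structure theory as an induction on the number $m$ of factors; the base cases are immediate because limit groups over Droms RAAGs are of type $F$. Put $L_i=G\cap\Gamma_i$, ordered so that $L_i$ is finitely generated exactly for $i\leq r$. Theorem~\ref{Theorem B} supplies a finite-index subgroup $G_0<G$ of the form $G_0=S_1\times S_2$, with $S_1$ a direct product of limit groups over Droms RAAGs (type $F$) and $S_2$ a finitely generated full subdirect product of the remaining $m-r$ factors, all of whose factor-intersections are infinitely generated. Because $F_n$ is inherited by direct products and $S_2$ is a retract of $G_0$, hence of type $FP_n(\mathbb{Q})$, it is enough to prove that $S_2$ is of type $F_n$. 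If $r\geq 1$ this is the inductive hypothesis for $S_2$; if $r=m$ then $S_2$ is trivial and $G$ is virtually a direct product of limit groups over Droms RAAGs, so of type $F$. This leaves the case $r=0$: $G$ is itself a full subdirect product of $m$ limit groups over Droms RAAGs with every $L_i$ infinitely generated, and the induction affords no reduction.

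The case $r=0$ is the real obstacle, and is where the hypothesis $FP_n(\mathbb{Q})$ has to be used in earnest --- $FP_n(\mathbb{Q})$ implies neither $FP_n(\mathbb{Z})$ nor, for $n\geq 2$, finite presentability for an arbitrary group, so one must show that it does so here. Following the strategy of \cite{something1}, I would first reduce to the case of centreless factors, so that Theorem~\ref{Theorem C} applies, and then show by contradiction that $G$ virtually surjects onto every product of $n$ of the factors $\Gamma_i$: if some such projection had infinite-index image, then a Bieri--Stallings-type computation --- tracking a homology class across the fibre of a map of $G$ onto $\mathbb{Z}$, or onto an abelian limit group, by means of the K\"unneth formula and the Lyndon--Hochschild--Serre spectral sequence, and using that the $\Gamma_i$ are of type $F$ --- would exhibit an infinite-dimensional $H_k(G;\mathbb{Q})$ with $k\leq n$, contradicting $FP_n(\mathbb{Q})$. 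Once this virtual surjectivity is in hand, the positive part of the theory --- the analogue for limit groups over Droms RAAGs of the finiteness results of \cite{something1} --- yields type $F_n$, so that $G$ is of type $F_n$, as required. The delicate parts are matching homological degrees against numbers of factors throughout these estimates, and making precise the reductions that allow Theorem~\ref{Theorem C} to be invoked.
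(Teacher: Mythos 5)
Your reduction via Theorem~\ref{Theorem B} and the induction on the number $m$ of factors is reasonable, and you correctly isolate the case $r=0$ (a full subdirect product with every $L_i$ infinitely generated) as the crux. But there is a genuine gap in the last step of that case. You propose to show that the hypothesis $FP_n(\mathbb{Q})$ forces virtual surjection onto every product of $n$ of the factors, and then to invoke ``the positive part of the theory'' to conclude type $F_n$. That positive converse --- that a full subdirect product of limit groups over Droms RAAGs which virtually surjects onto every $n$-fold subproduct is of type $F_n$ --- is available in this paper only for $n=2$: it is the implication $(4)\Rightarrow(1)$ of Theorem~\ref{Theorem Section 10}, which rests on \cite[Theorem~E]{Bridson3}. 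For $n\geq 3$ no such statement is proved here, nor is it a result of \cite{something1}; those sources establish only the ``negative'' direction, producing infinite-dimensional homology from a failure of virtual surjectivity (Theorems~\ref{Theorem Section 7} and~\ref{Theorem C}). So ``the positive part of the theory yields type $F_n$'' has no referent, and your $r=0$ argument does not close. A further soft spot in the same paragraph: an infinite-dimensional $H_k$ of the quotient $q(G)$ produced by Theorem~\ref{Theorem C} does not automatically pull back to an infinite-dimensional $H_k(G;\mathbb{Q})$, so even the asserted virtual surjectivity onto $n$-tuples would need a more careful spectral-sequence argument than the one you sketch.

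For comparison, the paper dispatches this corollary in one line, as an immediate consequence of \cite{Baumslag} and Theorem~\ref{Theorem A}. Read literally, that deduction covers the situation where $G$ admits an embedding into a product of at most $n$ limit groups over Droms RAAGs: then $FP_n(\mathbb{Q})$ gives $FP_m(\mathbb{Q})$, Theorem~\ref{Theorem A} makes $G$ virtually a direct product of limit groups, and such a group is of type $F_\infty$, hence $F_n$. You are right that this leaves the case $m>n$ (already present for Stallings--Bieri-type kernels, which are $FP_2(\mathbb{Q})$ and sit in $m=3$ factors) untreated by Theorem~\ref{Theorem A} alone; your inductive framework is a sensible attempt to address it, but it cannot be completed with the material in this paper because the positive finiteness theorem you call on in the $r=0$ case is missing for $n\geq 3$.
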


In Section~\ref{Section 10}, we study finitely presented groups that are residually a Droms RAAG. In order to state the result, we introduce the following definition: an embedding $S  \hookrightarrow \Gamma_{0}\times \cdots \times \Gamma_{n}$ of a finitely generated group that is residually a Droms RAAG as a full subdirect product of limit groups over Droms RAAGs is \emph{neat} if $\Gamma_{0}$ is abelian (possibly trivial), $S \cap \Gamma_{0}$ is of finite index in $\Gamma_{0}$ and $\Gamma_{i}$ has trivial center for $i\in \{1,\dots,n\}$.

\begin{section10}
Let $S$ be a finitely generated group that is residually a Droms RAAG. The following are equivalent:\\[3pt]
(1) $S$ is finitely presentable;\\[3pt]
(2) $S$ is of type $FP_{2}(\mathbb{Q})$;\\[3pt]
(3) $\dim H_{2}(S_{0};\mathbb{Q})$ is finite for all subgroups $S_{0}<S$ of finite index;\\[3pt]
(4) There exists a neat embedding $S \hookrightarrow \Gamma_{0}\times \cdots \times \Gamma_{n}$ into a product of limit groups over Droms RAAGs such that the image of $S$ under the projection to $\Gamma_{i}\times \Gamma_{j}$ has finite index for $0\leq i<j \leq n$;\\[3pt]
(5) For every neat embedding $S \hookrightarrow \Gamma_{0}\times \cdots \times \Gamma_{n}$ into a product of limit groups over Droms RAAGs the image of $S$ under the projection to $\Gamma_{i}\times \Gamma_{j}$ has finite index for $0\leq i<j \leq n$.
\end{section10}

For any commutative ring $R$, the property $FP_{n}(R)$ is inherited by finite index subgroups and persists in finite extensions. Nevertheless, if $H$ is a subgroup in $G$ and $H$ is of type $FP_{n}(R)$, it does not mean that $G$ is of type $FP_{n}(R)$. For example, $\mathbb{Z}$ is of type $FP_{1}(\mathbb{Z})$ but an infinitely generated free group is not of type $FP_{1}(\mathbb{Z})$. However, when restricted to subgroups of direct products of limit groups over Droms RAAGs, it follows from Theorem~\ref{Theorem Section 10} that any subdirect product of limit groups over Droms RAAGs that contains a finitely presentable full subdirect product is again finitely presentable. We generalize this as follows:

\begin{theoremend}
Let $\Gamma_{1}\times \cdots \times \Gamma_{k}$ be the direct product of limit groups over Droms RAAGs, where $\Gamma_{1}$ is free abelian and $\Gamma_{i}$ is a limit group over a Droms RAAG with trivial center, for $i\in \{2,\dots,k\}$. Let $n\in \mathbb{N} \setminus \{1\}$, $S < \Gamma_{1}\times \cdots \times \Gamma_{k}$ be a full subdirect product and let $T < \Gamma_{1}\times \cdots \times \Gamma_{k}$ be a subgroup that contains $S$. If $S$ is of type $FP_{n}(\mathbb{Q})$, then so is $T$.
\end{theoremend}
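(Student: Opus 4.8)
The plan is to run everything through the characterisation of the finiteness property $FP_{n}(\Q)$ for subdirect products in terms of virtual surjections onto sub-products of the factors — the analogue for limit groups over Droms RAAGs of the criterion of \cite{something1} that underlies Theorems~\ref{Theorem A}, \ref{Theorem B}, \ref{Theorem C} and~\ref{Theorem Section 10}. Call a full subdirect product $P<\Gamma_{1}\times\cdots\times\Gamma_{k}$ \emph{$n$-VSP} if, for every $I\subseteq\{1,\dots,k\}$ with $|I|\le n$, the projection $p_{I}(P)$ to $\Gamma_{I}:=\prod_{i\in I}\Gamma_{i}$ has finite index. Under the standing hypotheses ($\Gamma_{1}$ free abelian, $\Gamma_{i}$ with trivial centre for $i\ge 2$), the input I want is: for every $n\ge 2$, a full subdirect product is of type $FP_{n}(\Q)$ if and only if it is $n$-VSP; and, crucially, being $n$-VSP (for $n\ge 2$) already forces finite presentability, so no a priori finite-generation hypothesis on $P$ is needed. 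For $n=2$ this is essentially the content of the equivalences $(1)\Leftrightarrow(2)\Leftrightarrow(4)\Leftrightarrow(5)$ of Theorem~\ref{Theorem Section 10}, after the usual reduction to a neat embedding.

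Granting this, the proof is short. First, $T$ is again a full subdirect product of $\Gamma_{1}\times\cdots\times\Gamma_{k}$: it is subdirect since $p_{i}(T)\supseteq p_{i}(S)=\Gamma_{i}$, and full since $T\cap\Gamma_{i}\supseteq S\cap\Gamma_{i}\ne 1$. Next, as $S$ is of type $FP_{n}(\Q)$ with $n\ge 2$, $S$ is $n$-VSP, i.e.\ $p_{I}(S)$ has finite index in $\Gamma_{I}$ for every $I$ with $|I|\le n$. Since $S\le T$ we have $p_{I}(S)\le p_{I}(T)\le\Gamma_{I}$, so $p_{I}(T)$ also has finite index in $\Gamma_{I}$ for every such $I$; hence $T$ is $n$-VSP. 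Applying the characterisation in the converse direction to the full subdirect product $T$ gives that $T$ is of type $FP_{n}(\Q)$, as required. There is no circularity: the ``moreover'' part shows that $T$, being $n$-VSP with $n\ge 2$, is automatically finitely presented, in particular finitely generated.

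All of the content thus sits in the characterisation, and that is where I expect the main difficulty to be — as in \cite{something1}, the delicate point is to secure finite presentability (and more generally type $FP_{n}(\Q)$) for $T$ without knowing in advance that $T$ is finitely generated. The implication ``$n$-VSP $\Rightarrow FP_{n}(\Q)$'' is the analogue of the asymmetric $1$--$2$--$3$ theorem: one argues by induction on the number $k$ of factors, writing $P$ as an extension of the (inductively understood) projection $p_{\{1,\dots,k-1\}}(P)$ by the normal subgroup $P\cap\Gamma_{k}$ and using that limit groups over Droms RAAGs are of type $F$, together with the coherence of Droms RAAGs, to control the finiteness of the pieces; the abelian factor $\Gamma_{1}$ is absorbed by the neatness normalisation, replacing $\Gamma_{1}$ by $P\cap\Gamma_{1}$, which is of finite index once $P$ is of type $FP_{2}(\Q)$ (cf.\ Theorem~\ref{Theorem Section 10}). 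For the converse, ``$FP_{n}(\Q)\Rightarrow n$-VSP'', Theorem~\ref{Theorem C} is the engine: if some $p_{I}(P)$ had infinite index in $\Gamma_{I}$ with $|I|=m\le n$, then $p_{I}(P)$ would be an infinite-index full subdirect product of $m$ of the $\Gamma_{i}$, so by Theorem~\ref{Theorem C} (applied inside $\Gamma_{I}$, after splitting off the possible abelian factor) a finite-index subgroup of $p_{I}(P)$ would have infinite-dimensional rational homology in some degree $j\le m\le n$; pulling this obstruction back along $P\twoheadrightarrow p_{I}(P)$ through the Lyndon--Hochschild--Serre spectral sequence, using the finiteness properties of the kernel $P\cap\ker p_{I}$ (itself a subdirect product of the complementary factors, to which the same analysis applies), would contradict $P$ being of type $FP_{n}(\Q)$.
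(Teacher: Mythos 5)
Your proposal routes the whole argument through the characterisation ``full subdirect product is $FP_{n}(\mathbb{Q})$ $\iff$ all projections to $n$-tuples of factors have finite index'' (your ``$n$-VSP''). Granting that equivalence, your argument is correct and very short: $T$ is again full subdirect, projections can only grow in passing from $S$ to $T$, so finite index of $p_{I}(S)$ gives finite index of $p_{I}(T)$, and you are done. But this is a genuine gap, not just a rearrangement: the $n$-VSP $\iff FP_{n}(\mathbb{Q})$ criterion for arbitrary $n$ is \emph{not} proved in the paper, and the paper's Theorem~\ref{Theorem end} does not rely on it. What the paper does establish is the case $n=2$ (Theorem~\ref{Theorem Section 10}), Theorem~\ref{Theorem A} ($FP_{n}\Rightarrow$ virtually a direct product), and Theorem~\ref{Theorem C}, none of which is the general $n$-VSP theorem. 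Your sketch of how one would prove the criterion (``induction on $k$, writing $P$ as an extension of $p_{\{1,\dots,k-1\}}(P)$ by $P\cap\Gamma_{k}$'' for one direction, an LHS pullback against Theorem~\ref{Theorem C} applied to $p_{I}(P)$ for the other) is exactly where the hard content lives, and neither direction is carried out; the induction step in the first direction in particular needs control over the finiteness type of the kernel that is not available a priori, and the spectral-sequence pullback in the second needs finiteness properties of $P\cap\ker p_{I}$ that you would again have to argue inductively.

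The paper's actual proof is considerably more economical and avoids this entirely. It only uses the hypothesis $FP_{n}(\mathbb{Q})$ at $n=2$ to invoke Theorem~\ref{Theorem Section 6}, obtaining that each $\Gamma_{i}/L_{i}$ (and hence $D/L$ where $D=\Gamma_{1}\times\cdots\times\Gamma_{k}$ and $L=L_{1}\times\cdots\times L_{k}$) is virtually nilpotent. Since $L\le S\le T\le D$, both $S/L$ and $T/L$ live inside the finitely generated virtually nilpotent group $D/L$, and Corollary~\ref{Nilpotent 2} produces a subnormal chain $S_{0}\triangleleft S_{1}\triangleleft\cdots\triangleleft S_{l}=T$ with $S_{0}$ of finite index in $S$ and each $S_{i+1}/S_{i}$ finite or infinite cyclic. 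Since $S_{0}$ is of type $FP_{n}(\mathbb{Q})$ (finite index in $S$) and each quotient $S_{i+1}/S_{i}$ is of type $FP_{\infty}$, the standard extension lemma for the $FP_{n}$ property promotes $FP_{n}(\mathbb{Q})$ up the chain to $T$. In short: the paper exploits the fact that the ``slack'' between $S$ and $T$ lives inside the polycyclic-by-finite group $D/L$, where subnormal chains with cyclic-or-finite quotients come for free, rather than re-deriving structural control of $T$ from scratch via a projection criterion. If you want to retain your cleaner high-level argument, you would first have to prove the $n$-VSP characterisation for limit groups over Droms RAAGs, which is a substantially larger undertaking than Theorem~\ref{Theorem end} itself.
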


In \cite[Section 7]{Bridson3} the authors solve the multiple conjugacy problem for finitely presented residually free groups. Their argument can also be used to prove it for the case of finitely presented residually Droms RAAGs. Thus, in Section~\ref{Section 10} we review their proof and we explain why this can be used also for finitely presented groups that are residually a Droms RAAG. That is, we show:

\begin{theoremconjugacy}
The multiple conjugacy problem is solvable in every finitely presented group that is residually a Droms RAAG.
\end{theoremconjugacy}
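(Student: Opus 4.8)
The plan is to follow the argument of \cite[Section~7]{Bridson3} step by step, replacing each appeal to the structure of limit groups over free groups by the corresponding fact for limit groups over Droms RAAGs. Let $S$ be a finitely presented group that is residually a Droms RAAG. Since finite presentability and type $FP_{2}(\mathbb{Q})$ coincide for such groups, Theorem~\ref{Theorem Section 10} (the implication $(1)\Rightarrow(4)$) supplies a neat embedding $S\hookrightarrow\Gamma_{0}\times\cdots\times\Gamma_{n}$ into a product of limit groups over Droms RAAGs in which $\Gamma_{0}$ is free abelian with $S\cap\Gamma_{0}$ of finite index, each $\Gamma_{i}$ has trivial centre for $i\geq1$, and the projection of $S$ to every $\Gamma_{i}\times\Gamma_{j}$ has finite index. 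I would first note that this embedding, the finitely generated subgroups $S\cap\Gamma_{i}$, and the finitely presented groups that encode the subdirect structure of $S$, can all be produced effectively from a finite presentation of $S$, the proof of Theorem~\ref{Theorem Section 10} being constructive. The argument then needs three further ingredients: (a) solvability of the multiple conjugacy problem in limit groups over Droms RAAGs; (b) an effective description of centralizers of finite subsets in a limit group over a Droms RAAG, showing in particular that they are finitely generated and have solvable membership problem; and (c) solvability of the generalised membership problem in finitely presented residually Droms RAAG groups, which is Theorem~\ref{Theorem Membership}. Granting (a)--(c), the argument of \cite[Section~7]{Bridson3} runs unchanged.

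That argument proceeds by induction on the number of factors, exploiting the subdirect structure. The base case, multiple conjugacy in a single limit group over a Droms RAAG, is ingredient (a); the free abelian factor $\Gamma_{0}$ is the trivial instance, where conjugation is inert and the only constraint is equality of the relevant coordinates. For the inductive step one is given tuples $\bar{u},\bar{v}$ in $S\leq\Gamma_{1}\times\cdots\times\Gamma_{n}$; projecting to $\Gamma_{1}\times\cdots\times\Gamma_{n-1}$ one decides by induction whether the images are simultaneously conjugate in the (again subdirect) image, and if so, after replacing $\bar{v}$ by a conjugate, one may assume $\bar{u}$ and $\bar{v}$ have the same image there, so that they differ only in the last coordinate and there only by elements of $S\cap\Gamma_{n}$. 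Whether a conjugator for this reduced pair exists in $S$ is then --- and this is the delicate part of \cite[Section~7]{Bridson3} --- decided by testing membership of an explicitly computed element in a finitely generated subgroup built from $S\cap\Gamma_{n}$ and from the centralizer in $\Gamma_{1}\times\cdots\times\Gamma_{n-1}$ of the image of $\bar{u}$, which by (b) is finitely generated and computable; after passing to the finitely presented groups that control the subdirect product this is an instance of the generalised membership problem and is settled by (c). Routine bookkeeping absorbs the passage, where needed, to a finite-index full subdirect subgroup and the finitely many coset representatives this introduces.

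The \emph{main obstacle} is supplying ingredient (b), and through it (a). The argument of \cite{Bridson3} relies on the commutative transitivity of limit groups over free groups, so that centralizers of non-trivial elements are abelian; this already fails for limit groups over Droms RAAGs, since $\mathbb{Z}^{2}$ is one. The real work is therefore to analyse centralizers of finite subsets in a limit group over a Droms RAAG directly, and to show that they are again of a controlled, Droms-RAAG-like type --- finitely generated, coherent, effectively computable and with solvable membership problem --- so that the coordinate-wise conjugacy step and the membership reduction of the previous paragraph go through without commutative transitivity. The same structural analysis of limit groups over Droms RAAGs (their splitting theory, or their relative-hyperbolicity-type behaviour, which reduces multiple conjugacy to the peripheral free-abelian pieces) yields (a). Once these centralizer facts are in place, every remaining step transfers from \cite[Section~7]{Bridson3} essentially without change, and the multiple conjugacy problem in $S$ is solvable.
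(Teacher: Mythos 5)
Your proposal heads down a substantially harder path than the paper's, and it leaves exactly the difficult part undone. The paper does \emph{not} prove Theorem~\ref{Theorem Conjugacy} by an induction on factors with a centralizer-plus-membership reduction; it goes through \cite[Proposition~7.5]{Bridson3}, the bicombability criterion. Concretely: take a (neat, full subdirect) embedding $S\hookrightarrow\Gamma_0\times\cdots\times\Gamma_n$. By Theorem~\ref{Theorem Section 6} each $\Gamma_j/L_j$ is virtually nilpotent, so after passing to finite-index subgroups $\Gamma_j'\supseteq L_j$ with $\Gamma_j'/L_j$ nilpotent, the product $\Gamma'=\prod\Gamma_j'$ contains the normal subgroup $L=\prod L_j\subseteq S'=S\cap\Gamma'$ with $\Gamma'/L$ nilpotent. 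Since each $\Gamma_j$ is the fundamental group of a compact non-positively curved space (Property~\ref{Property (6)}/\cite[Corollary~9.5]{Montse}), $\Gamma'$ is bicombable, so \cite[Proposition~7.5]{Bridson3} gives a solution to the multiple conjugacy problem for $S'$. Finally, limit groups over Droms RAAGs have unique roots (RAAGs do, and being fully residually a group with unique roots inherits this), so $S$ has unique roots; \cite[Lemma~7.2]{Bridson3} then upgrades the solution from the finite-index subgroup $S'$ to $S$. No centralizer computations, no membership oracle.

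The gap in your proposal is precisely the thing you flag as ``the real work'': ingredient (b), an effective structure theory for centralizers of finite subsets in limit groups over Droms RAAGs, together with (a) which you say falls out of the same analysis. Neither is supplied, and commutative transitivity genuinely fails here, so there is no shortcut via \cite{Bridson3}'s centralizer arguments for the free case; as stated your argument does not close. You also invoke a Theorem~\ref{Theorem Membership} that is not proved in this paper, so ingredient (c) is similarly unavailable as a black box. The lesson is that the bicombable-group criterion of \cite[Proposition~7.5]{Bridson3} is exactly what lets one bypass centralizers altogether, and that is the route the paper takes: the only new inputs needed beyond \cite{Bridson3} are the virtually nilpotent quotient $\Gamma_j/L_j$ (Theorem~\ref{Theorem Section 6}), bicombability (Property~\ref{Property (6)}), and the unique-roots lemma.
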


Recall that a subgroup $H$ of a group $G$ is \emph{separable} if whenever $g\in G \setminus H$ there is a finite group $Q$ and a homomorphism $f \colon G \mapsto Q$ with $f(g) \notin f(H)$. In addition, $H$ is a \emph{virtual retract} of $G$ if there is a finite index subgroup $G^\prime < G$ such that $H \subseteq G^\prime$ and a homomorphism $r \colon G^\prime \mapsto H$ such that $r_{| H} = \text{id }_{H}$. In \cite{Separability}, Bridson and Wilton, by treating finitely presented residually free groups as subdirect products of limit groups, show that finitely presentable subgroups of residually free groups are separable and that the subgroups of type $FP_{\infty}$ are virtual retracts. Analogously, for finitely presented residually Droms RAAGs the following holds (see Section~\ref{Section 10}):

\begin{retract}
If $G$ is a finitely generated group that is residually a Droms RAAG and $H < G$ is a subgroup of type $FP_{\infty}(\mathbb{Q})$, then $H$ is a virtual retract of $G$.
\end{retract}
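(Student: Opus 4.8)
The plan is to adapt the argument of Bridson and Wilton \cite{Separability} to the present setting, using the structural results of this paper (Theorems~\ref{Theorem A}, \ref{Theorem B} and \ref{Theorem C}, and the notion of a neat embedding from Section~\ref{Section 10}) in place of their appeals to the structure theory of residually free groups, together with the analogue for limit groups over Droms RAAGs of Wilton's theorem that finitely generated subgroups of limit groups are virtual retracts. I would use three soft properties of virtual retracts: (i) the relation is transitive; (ii) if $q\colon C\to C'$ is a homomorphism whose restriction to $A\le C$ is injective and $q(A)$ is a virtual retract of $C'$, then $A$ is a virtual retract of $C$ (pull the witnessing data back along $q$); (iii) a direct product of virtual retracts is a virtual retract of the product of the ambient groups, and a finite-index subgroup is always a virtual retract. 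As an instance of (iii) and (i), every subgroup of a finitely generated free abelian group is a virtual retract (it has finite index in its saturation, which is a direct summand).

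\emph{Reduction to a direct product.} Since $G$ is finitely generated and residually a Droms RAAG, by \cite{Baumslag} there is an embedding $G\hookrightarrow D:=\Gamma_{1}\times\cdots\times\Gamma_{n}$ into a direct product of limit groups over Droms RAAGs. As $H\le G\le D$, it suffices to prove that $H$ is a virtual retract of $D$: if $D'\le D$ has finite index, contains $H$ and retracts onto $H$ via $r$, then $G\cap D'$ has finite index in $G$, contains $H$, and $r$ restricts to a retraction onto $H$. Now $H$ is of type $FP_{\infty}(\mathbb{Q})$, hence finitely generated, so each $K_{i}:=p_{i}(H)$ is a finitely generated subgroup of the limit group $\Gamma_{i}$ over a Droms RAAG; such a subgroup is again a limit group over a Droms RAAG, and -- this is the key geometric input -- it is a virtual retract of $\Gamma_{i}$. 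Hence, by (iii) and (i), it is enough to show that $H$ is a virtual retract of $K_{1}\times\cdots\times K_{n}$; that is, we may assume $H$ is a subdirect product of $D$. If $H\cap\Gamma_{i}=1$ for some $i$, then the projection away from $\Gamma_{i}$ is injective on $H$, so by (ii) we may discard that factor; iterating, we reduce to the case that $H$ is a \emph{full} subdirect product. Finally, using the structural description of limit groups over Droms RAAGs with non-trivial centre that underlies the notion of a neat embedding, I would rewrite the ambient product as $\Gamma_{0}\times\Gamma_{1}\times\cdots\times\Gamma_{m}$ with $\Gamma_{0}$ finitely generated free abelian (possibly trivial) and $\Gamma_{1},\dots,\Gamma_{m}$ of trivial centre, and, after once more replacing each factor $\Gamma_{i}$ by $p_{i}(H)$ (legitimate by (i) and (iii), since $H$ stays inside $p_{0}(H)\times\prod_{i\ge1}p_{i}(H)$ and each $p_{i}(H)$ is a virtual retract of $\Gamma_{i}$) and discarding factors met trivially, arrive at the situation that $H$ is a full subdirect product of $\Gamma_{0}\times\Gamma_{1}\times\cdots\times\Gamma_{m}$ in this neat form.

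\emph{Finishing via the structure theory.} For $i\ge1$ set $L_{i}:=H\cap\Gamma_{i}$: it is non-trivial (fullness), normal in $\Gamma_{i}$ (subdirectness), and finitely generated by Theorem~\ref{Theorem B} (since $H$ is of type $FP_{\infty}(\mathbb{Q})$). As $\Gamma_{i}$ is a non-trivial limit group over a Droms RAAG with trivial centre, hence non-abelian, a non-trivial finitely generated normal subgroup of it has finite index (the natural generalisation of the fact for non-abelian limit groups, underlying Theorem~\ref{Theorem C}), so $L_{i}$ has finite index in $\Gamma_{i}$. Therefore the direct product $\prod_{i\ge1}L_{i}$, which is contained in $H$, has finite index in $\prod_{i\ge1}\Gamma_{i}$; and since $H$ surjects onto $\Gamma_{0}$ we have $\Gamma_{0}\times\prod_{i\ge1}\Gamma_{i}=H\cdot\big(1\times\prod_{i\ge1}\Gamma_{i}\big)$, whence $[\Gamma_{0}\times\prod_{i\ge1}\Gamma_{i}:H]=[\prod_{i\ge1}\Gamma_{i}:H\cap\prod_{i\ge1}\Gamma_{i}]\le[\prod_{i\ge1}\Gamma_{i}:\prod_{i\ge1}L_{i}]<\infty$. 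Thus $H$ has finite index in the neat ambient product, so is a virtual retract of it by (iii); unwinding the chain of reductions -- each of which kept $H$ inside the (shrinking) ambient group -- via (i), (ii) and (iii) then shows that $H$ is a virtual retract of the original $D$, hence of $G$. (Equivalently one could invoke Theorem~\ref{Theorem C} directly: the image of $H$ under the projection to $\Gamma_{1}\times\cdots\times\Gamma_{m}$ is a full subdirect product of type $FP_{\infty}(\mathbb{Q})$ of trivial-centre limit groups over Droms RAAGs, hence of finite index.)

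\emph{Main obstacle.} The genuinely new geometric content is the analogue, for limit groups over Droms RAAGs, of Wilton's ``Hall's theorem for limit groups'': that every finitely generated subgroup is a virtual retract. For Droms RAAGs themselves this should be accessible by induction along their construction from $\mathbb{Z}$ via direct and free products, building the retractions explicitly at each stage; for limit groups over them one needs the Droms-RAAG counterpart of the work of Wise and Wilton on specialness of limit groups, or an argument through their graph-of-groups / relatively hyperbolic structure. The second, more delicate issue is bookkeeping: unlike $FP_{n}$, being a virtual retract depends on the ambient group and is \emph{not} automatically inherited by a finite-index overgroup, so one must set up every reduction above -- the passages to $\prod K_{i}$, the discarding of coordinates, and above all the passage to neat form (where a finite-index adjustment of the factors $\Gamma_{i}$ may be unavoidable) -- so that $H$ itself remains inside the ambient group throughout and the conclusion genuinely propagates back to $H\le G$. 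Handling the abelian factor $\Gamma_{0}$, which is precisely where $H$ may fail to have finite index in $D$, is what the soft fact about subgroups of free abelian groups and the neat-embedding analysis are for.
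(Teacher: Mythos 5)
Your proposal follows the same route as the paper: both transpose the Bridson--Wilton argument from \cite{Separability}, substituting the present paper's structure theory (Theorems~\ref{Theorem A}, \ref{Theorem B}, \ref{Theorem C} and Theorem~\ref{Theorem Section 5}) for the analogous facts about limit groups over free groups. The paper's own ``proof'' is essentially a pointer to \cite[Sections 2--3]{Separability}; you have spelled out what that pointer entails --- reduction to a full subdirect product, the neat-embedding device for the abelian factor, the finite-index conclusion via Theorems~\ref{Theorem B} and \ref{Theorem Section 5} --- and the spelling-out is correct, as are the soft facts (i)--(iii) about virtual retracts that you use. The point you flag as the ``main obstacle'' is exactly the right thing to worry about: Bridson and Wilton's reduction from $H\le G\le D$ to $H$ subdirect in $p_1(H)\times\cdots\times p_n(H)$ rests on Wilton's theorem that finitely generated subgroups of limit groups (over free groups) are virtual retracts, and the Droms-RAAG analogue of that result is the one genuine black box that the present paper neither proves nor explicitly cites; it merely asserts that ``the same reduction can be applied.'' Your bookkeeping concern --- arranging the reductions so that $H$ stays inside each successive ambient group, since being a virtual retract is not inherited by passage to a finite-index overgroup --- is also precisely the care that is needed and that the paper's sketch elides.
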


\begin{separability}
If $G$ is a finitely generated group that is residually a Droms RAAG and $H$ is a finitely presentable subgroup of $G$ then $H$ is separable in $G$.
\end{separability}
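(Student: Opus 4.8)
The plan is to deduce separability of finitely presentable subgroups from the virtual retract theorem together with the standard fact that residually-(Droms RAAG) groups are, in particular, residually finite, and more to the point, that virtual retracts of separable groups inherit separability in a suitable sense. The overall strategy mirrors Bridson–Wilton's treatment of residually free groups, replacing their use of limit groups over free groups with limit groups over Droms RAAGs.

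First I would reduce to the case where $H$ is of type $FP_\infty(\mathbb{Q})$ whenever possible, but since a finitely presentable subgroup of a residually-(Droms RAAG) group need not a priori be $FP_\infty$, I instead argue directly. Embed $G$ as a finitely generated subgroup of a direct product $\Gamma_1 \times \cdots \times \Gamma_n$ of limit groups over Droms RAAGs, using \cite{Baumslag}; after passing to the subdirect product generated by $G$ and discarding trivial-intersection factors, one may assume $G$ is a full subdirect product. The key point is that limit groups over Droms RAAGs are themselves subgroups of Droms RAAGs, which are RAAGs, hence are \emph{virtually special} (indeed RAAGs act freely cocompactly on CAT(0) cube complexes), so they are LERF — more precisely, by work on special cube complexes, finitely generated subgroups of RAAGs are separable. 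I would then want to lift separability of $H$ in $G$ from separability inside the ambient product $\Gamma_1\times\cdots\times\Gamma_n$.

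The mechanism for this lift is the following: by Theorem~\ref{Theorem Retract} applied to finite-index approximations, or more directly by the $1$-$2$-$3$ type structure of full subdirect products established via Theorem~\ref{Theorem C} and Theorem~\ref{Theorem Section 10}, a finitely presentable full subdirect product $H$ sits inside the product with controlled "fibre" behaviour: the projections of $H$ to pairs of factors have finite index (condition (5) of Theorem~\ref{Theorem Section 10}), which by the analogue of the Bridson–Howie–Miller–Short machinery forces $H$ to be "sufficiently saturated" — it contains a term of a subnormal series with limit-group quotients. Then, given $g \in G \setminus H$, I would produce a finite quotient separating $g$ from $H$ in two steps: first use residual finiteness / subgroup separability of the limit group factors $\Gamma_i$ (which are subgroups of RAAGs, hence LERF) to separate the image of $g$ from the image of $H$ in a single factor or pair of factors whenever $g$ already lies outside the closure of $H$ in the profinite topology on that factor; and otherwise use the finite-index property of pairwise projections to conclude that $g$ must itself lie in $H$, a contradiction. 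Packaging this, $H$ is closed in the profinite topology on $\Gamma_1\times\cdots\times\Gamma_n$, and since $G$ carries the subspace profinite topology (the inclusion $G\hookrightarrow\prod\Gamma_i$ induces the full profinite topology on $G$ because the $\Gamma_i$ are LERF and $G$ is finitely generated), $H$ is closed in $G$, i.e. separable.

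The main obstacle I anticipate is the last technical point: showing that the profinite topology on $G$ agrees with the one induced from the ambient product of limit groups, equivalently that every finite-index subgroup of $G$ is the trace of a finite-index subgroup of $\prod \Gamma_i$. This requires knowing that the $\Gamma_i$ are "efficient" — that products of LERF groups of this type remain LERF on the relevant class of subgroups — which in the residually free case is handled by Bridson–Wilton using the goodness/efficiency of limit groups and the fact that $H$ has a finite classifying space up to the needed dimension. For limit groups over Droms RAAGs one needs the analogous input: limit groups over Droms RAAGs are subgroups of Droms RAAGs (by the retract/fibre-product structure developed earlier), Droms RAAGs are RAAGs, and finitely generated subgroups of RAAGs are separable and in fact the RAAGs are "good" in Serre's sense, so the induced topology argument goes through verbatim. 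I would isolate this as a lemma ("products of limit groups over Droms RAAGs are LERF on finitely presentable full subdirect products") proved by combining Theorem~\ref{Theorem Retract} with the separability of finitely generated subgroups of RAAGs, and then the separability theorem follows immediately.
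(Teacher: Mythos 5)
Your overall shape (embed $G$ as a full subdirect product of limit groups over Droms RAAGs, use the finite-index pairwise projections coming from Theorem~\ref{Theorem Section 10}, and lift separability from the ambient product) is the right one and matches the paper's intent, which is to import the Bridson--Wilton argument wholesale. But the proposal rests on two false claims that carry most of the load, and this creates a genuine gap.

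First, you assert that limit groups over Droms RAAGs are subgroups of Droms RAAGs. This fails already over free groups: the fundamental group of a closed orientable surface of genus at least $2$ is a limit group over $F_{2}=\mathbb{Z}\ast\mathbb{Z}$, hence a limit group over a Droms RAAG. If it were a subgroup of a Droms RAAG it would itself be a Droms RAAG (that is the defining property of Droms RAAGs, via Property~\ref{Property (2)} and Droms' theorem), but genus-$\geq 2$ surface groups are not RAAGs. Second, you assert that finitely generated subgroups of RAAGs are separable, i.e. that RAAGs are LERF. This is also false: $F_{2}\times F_{2}$ is a RAAG and contains finitely generated subgroups (Mihailova's subgroups) for which the membership problem is undecidable, hence which are not separable. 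What is true is that \emph{quasi-convex} subgroups of special groups are separable; general finitely generated subgroups are not. Because both of these claims are wrong, your proposed route to LERF of the factors $\Gamma_{i}$ collapses, and with it the claim that the profinite topology on $G$ is induced from the ambient product.

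The paper's argument avoids this entirely. It follows Bridson--Wilton's proof of their Theorem~B: reduce to the subdirect-product setting exactly as you did, then use the virtually nilpotent quotient theorem (here Theorem~\ref{Theorem Section 6}, the analogue of Theorem~4.2 of Bridson--Howie--Miller--Short) as the key structural input, together with subgroup separability of the ambient limit groups. In the free-group case the latter is Wilton's LERF theorem for limit groups, proved via the iterated cyclic amalgam (ICE) hierarchy, not via an embedding into free groups. So the correct analogue you need is a LERF-type statement for limit groups over Droms RAAGs proved by working through their acylindrical hierarchy (compare Property~\ref{Property (4)}, which does the cyclic case, and the ICE-type argument in its proof), not by pretending they sit inside RAAGs. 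If you replace your two false claims with that hierarchy-based separability input and otherwise keep your lifting argument via the finite-index pairwise projections and the nilpotent quotients $\Gamma_{j}/L_{j}$, you will essentially have reproduced the paper's route.
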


\section{Motivation and background}
\label{Section 2}

\subsection{Algebraic geometry over groups}
\label{Subsection}

Classical algebraic geometry is concerned with the study of algebraic sets, that is solutions of systems of equations $S$ over a ring $R$. Each point in the algebraic set defines a homomorphism from the ring of polynomials $R[x_{1}, \dots, x_{n}]$ to the ring of coefficients $R$. Using this relation, one can establish a one-to-one correspondence between the algebraic set $S$, the radical ideal $Rad(S)$ of the ring $R[x_{1}, \dots, x_{n}]$ and the coordinate ring $R[x_{1}, \dots, x_{n}]\slash Rad(S)$. 

If one requires further conditions on the ring $R$ (namely, if $R$ is Noetherian and an integral domain), then every algebraic set is the union of finitely many irreducible algebraic sets. Irreducible algebraic sets correspond to prime ideals and so to coordinate rings which are integral domains. Therefore, the decomposition of the algebraic set into a finite union of irreducible components is equivalent to the radical ideal being a finite intersection of prime ideals, which in turn corresponds to the coordinate ring being a finitely generated subring of a direct product of coordinate rings of irreducible algebraic sets.

In \cite{Baumslag}, Baumslag, Miasnikov and Remeslennikov lay down the foundations of algebraic geometry over groups, which bears a similarity with classical algebraic geometry. The motivation for doing this comes from the desire to study equations over groups. The authors present group-theoretic counterparts to algebraic sets, coordinate rings, irreducibility and many other concepts. The analog of coordinate ring is called a coordinate group. Furthermore, they establish a correspondence between algebraic sets and coordinate groups. They also characterise coordinate groups of irreducible algebraic sets in terms of their residual properties; more precisely, they show that coordinate groups over a group $G$ are precisely the finitely generated residually $G$ groups. As in classical algebraic geometry, the study of coordinate groups is equivalent to the study of finitely generated subgroups of the direct product of coordinate groups associated to irreducible algebraic sets.

Algebraic geometry over free groups has been developed by Sela and Kharlmapovich-Miasnikov. In \cite{Sela1}, \cite{Sela2}, \cite{Sela3}, Sela uses techniques from geometric group theory, low dimensional topology and Diophantine geometry to give a description of coordinate groups of irreducible algebraic sets over free groups, known as limit groups. Limit groups have an algebraic structure that can be described hierarchically in terms of their JSJ decomposition and from which one can deduce strong properties such as finite presentability.

The general study of coordinate groups over free groups is equivalent to the study of finitely generated subgroups of the direct product of finitely many limit groups. Baumslag and Roseblade started this work in \cite{Roseblade} by describing subgroups of the direct product of two free groups. They explored the difference between finitely generated and finitely presented subgroups. There is no hope of classifying finitely generated subgroups since the isomorphism problem is unsolvable. However, there is a neat structure for finitely presented subgroups: if $F_{1}$ and $F_{2}$ are two finitely generated free groups and $S$ is a finitely presented subgroup in $F_{1}\times F_{2}$, then $S$ is free  or it is virtually the direct product of two free groups.

The study of coordinate groups over free groups was performed by Bridson, Howie, Miller and Short in a series of papers that culminated in \cite{Bridson3} and \cite{something1}. In \cite{something1}, the authors show that, under some finiteness conditions that generalise finite presentability, coordinate groups over free groups (equivalently, residually free groups) have a tame structure. More precisely, if $\Gamma_{1},\dots, \Gamma_{n}$ are limit groups over free groups and $S$ is a subgroup of type $FP_{n}(\mathbb{Q})$ of $\Gamma_{1}\times \cdots \times \Gamma_{n}$, then $S$ is virtually the direct product of limit groups over free groups. The authors also characterize finitely presented coordinate groups in terms of their projections onto pairs $\Gamma_{i} \times \Gamma_{j}$ (see \cite{Bridson3}). Briefly, they prove that if $S$ is a finitely generated coordinate group and $S \hookrightarrow \Gamma_{1}\times \cdots \times \Gamma_{n}$ is an embedding with $\Gamma_{i}$ a limit group over a free group, then $S$ is finitely presented if and only if the image of $S$ under the projection to $\Gamma_{i}\times\Gamma_{j}$ has finite index for $1\leq i < j \leq n$.

These structural results on finitely presented coordinate groups over free groups are useful for studying algorithmic problems. For example, in \cite{Bridson3} it is shown that the generalised conjugacy and the membership problems, although undecidable for general coordinate groups over free groups, are decidable for finitely presented ones.

\subsection{Right-angled Artin groups}

In recent years, RAAGs have been studied extensively especially due to their rich subgroup structure. On the one hand, they are a source of subgroups with interesting finiteness properties. In \cite{Bestvina}, Bestvina and Brady described subgroups of RAAGs that are of type $FP$ but not of type $F$. On the other hand, RAAGs have proved to be fundamental in the study of $3$-manifold groups. Agol showed that non-positively curved cube complexes with hyperbolic fundamental groups are virtually subgroups of RAAGs (see \cite{Agol}). An essential step in the argument is the result of Haglund and Wise that states that fundamental groups of special cube complexes are subgroups of RAAGs (see \cite{Wise}).

Let us recall the definition of a right-angled Artin group. Given a simplicial graph $X$, the corresponding \emph{right-angled Artin group} (\textit{RAAG}), denoted by $GX$, is given by the following presentation. Let $V(X)$ denote the vertex set of $X$. Then,
\[ GX= \langle V(X) \mid \text{if } xy=yx\iff x \text{ and } y \text{ are adjacent}\rangle.\] 
As mentioned above, subgroups of RAAGs can be quite wild, in particular, not all subgroups of RAAGs are themselves RAAGs. Carl Droms provided a condition for a RAAG to have all its subgroups again of this type (see \cite{Droms}): every subgroup of $GX$ is itself a RAAG if and only if no full subgraph of $X$ has either of the forms $C_{4}$ or $P_{4}$ illustrated below:
\[
\begin{tikzpicture}
	  \node [mynode] (1) at (0,0) {};
	  \node [mynode] (2) at (1,0) {};
      \node [mynode] (3) at (0,-1)  {};
      \node [mynode] (4) at (1,-1) {};
      \node [label=below:$C_{4}$] (5) at (0.5,-1) {};
      \node [label=above:$\text{or}$] (5) at (2,-1) {};
      \node [mynode] (6) at (3,-0.5) {};
      \node [mynode] (7) at (4,-0.5) {};
      \node [mynode] (8) at (5,-0.5) {};
       \node [mynode] (9) at (6,-0.5) {};
       \node [label=below:$P_{4}$] (5) at (4.5,-0.5) {};
       \draw[line width=1.1pt] (1) -- (2) {};
       \draw[line width=1.1pt] (1) -- (3) {};
       \draw[line width=1.1pt] (2) -- (4) {};
       \draw[line width=1.1pt] (4) -- (3) {};
       \draw[line width=1.1pt] (6) -- (7) {};
       \draw[line width=1.1pt] (7) -- (8) {};
       \draw[line width=1.1pt] (8) -- (9) {};     
\end{tikzpicture}
\]
This type of RAAGs will be called \emph{Droms RAAGs}.

Given a class of groups, there is a natural way to construct other groups using operations such as taking free products or adding center:

\begin{definition}\label{Definition hierarchy}
Let $\mathcal{G}$ be a class of groups. The \emph{$Z \ast$-closure of $\mathcal{G}$}, denoted by $Z \ast (\mathcal{G})$, is the union of classes $(Z \ast (\mathcal{G}))_{k}$ defined as follows. At level $0$, the class $(Z \ast (\mathcal{G}))_{0}$ equals $\mathcal{G}$. A group $G$ lies in $(Z \ast (\mathcal{G}))_{k}$ if and only if
\[ G \cong \mathbb{Z}^m \times (G_{1}\ast \cdots \ast G_{n}),\] where $m\in \mathbb{N}\cup \{0\}$ and the group $G_{i}$ lies in $(Z \ast (\mathcal{G}))_{k-1}$ for all $i\in \{1,\dots,n\}$.
\end{definition}

Notice that if the class $\mathcal{G}$ contains only RAAGs, then so does its $Z \ast$-closure.

\begin{remark}\label{DromsRAAG}
In this terminology, Droms showed in \cite{Droms} that the class of Droms RAAGs is the $Z\ast$-closure of $\mathbb{Z}$.
\end{remark}

\subsection{Limit groups over Droms RAAGs}

If $G$ is a group, a \emph{limit group over $G$} is a group $H$ that is finitely generated and fully residually $G$; that is, for any finite set of non-trivial elements $S \subseteq H$ there is a homomorphism $\varphi \colon H \mapsto G$ which is non-trivial on all $s\in S$.

In the language of model theory, a limit group over $G$ is a finitely generated model of the universal theory of $G$. In algebraic geometry over groups, it is a coordinate group of an irreducible algebraic set over $G$. In universal algebra, limit groups over $G$ are finitely generated subgroups of the ultrapower of $G$. Finally, in topological terms, they are limits of $G$ in a compact space of marked groups. See \cite{limit groups} for a survey of limit groups.

The structure of limit groups over free groups is well understood. Sela characterised them as groups that have a faithful action on a real tree induced by a sequence of homomorphisms to a free group. In addition, one can give a hierarchical description in terms of their non-trivial cyclic JSJ-decomposition. For further reference, see Sela's original papers \cite{Sela1}, \cite{Sela2}, \cite{Sela3} or the introductory notes of Bestvina and Feighn \cite{Feighn}.

In \cite{Montse} and \cite{Montse2}, the authors studied systems of equations over RAAGs. Their results can be used to establish the following properties of limit groups over Droms RAAGs which we will need to prove the results:

\begin{property}\label{Property (1)} Limit groups over Droms RAAGs are finitely presented (see \cite[Corollary 7.8]{Montse}).
\end{property}

\begin{property}\label{Property (2)} Finitely generated subgroups of limit groups over Droms RAAGs are limit groups over Droms RAAGs (see \cite[Theorem 8.1]{Montse}).\end{property}

\begin{property}\label{Property (6)} Limit groups over Droms RAAGs are of type $F_{\infty}$. This follows from \cite[Corollary 9.5]{Montse}.
\end{property}

In \cite{Montse}, Casals-Ruiz, Duncan and Kazachkov define a notion of height of a limit group over a Droms RAAG and prove:

\begin{proposition}[Analogous to Proposition 2.1 in \cite{something1}]\label{Proposition 1}
Let $G$ be a Droms RAAG. Write \[G \cong \mathbb{Z}^n \times G^\prime,\] where $n\in \mathbb{N}\cup \{0\}, G^\prime= G_{1}\ast \cdots \ast G_{r}$ and $G_{i}$ is a Droms RAAG with fewer vertices than $G$, $i\in \{1,\dots,r\}$.

Let $\Gamma$ be a limit group over $G$. If $\text{ht }(\Gamma)= 0$, then $\Gamma$ is a Droms RAAG which is a subgroup of $G$. If $\text{ht }(\Gamma)=h>0$, then
\[ \Gamma \cong \mathbb{Z}^m \times \Gamma^\prime,\] where
\begin{itemize}
\item if $n=0$, then $m=0$;
\item $\Gamma^\prime$ is a limit group over $G^\prime$.
\end{itemize}

Moreover, $\Gamma^\prime$ is a Droms RAAG or $\Gamma^\prime$ admits an acylindrical graph of groups decomposition such that at least one vertex group is a limit group of height $\leq h-1$ and the edge groups are trivial (if $\Gamma^\prime$ admits a non-trivial free product splitting) or infinite cyclic (if $\Gamma^\prime$ is freely-indecomposable).
\end{proposition}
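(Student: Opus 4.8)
The plan is to follow the proof of Proposition~2.1 in \cite{something1}, reading off the decomposition from the hierarchical description of limit groups over Droms RAAGs developed in \cite{Montse}; throughout, $\mathrm{ht}(\cdot)$ and the acylindrical accessibility statements used below are those of \cite{Montse}.

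\emph{Height $0$.} By definition, a limit group $\Gamma$ over $G$ with $\mathrm{ht}(\Gamma)=0$ is isomorphic to a finitely generated subgroup of $G$. Since $G$ is a Droms RAAG, every finitely generated subgroup of $G$ — and hence every finitely generated subgroup of $\Gamma$, as $\Gamma\le G$ — is a RAAG; because $\Gamma$ is itself finitely generated this says precisely that $\Gamma$ is a Droms RAAG, necessarily a subgroup of $G$. This is the first assertion.

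\emph{Positive height.} Assume $\mathrm{ht}(\Gamma)=h>0$ and fix the decomposition $G\cong\mathbb{Z}^n\times G'$ with $n$ chosen maximal, so that $G'=G_1\ast\cdots\ast G_r$ is trivial or has trivial center and each $G_i$ has strictly fewer vertices than $G$. The first step is to show that $Z(\Gamma)$ is finitely generated free abelian, say of rank $m$, that it is a direct factor of $\Gamma$, i.e.\ $\Gamma\cong\mathbb{Z}^m\times\Gamma'$ with $\Gamma'$ finitely generated and $Z(\Gamma')=1$, and that $m=0$ whenever $n=0$; here one invokes the structure theory of \cite{Montse}, the underlying reason being that any system of approximating homomorphisms $\varphi_i\colon\Gamma\to G$ carries $Z(\Gamma)$ into $Z(G)=\mathbb{Z}^n$, which already forces $Z(\Gamma)=1$ when $n=0$. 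Granting this, I would verify that $\Gamma'$ is a limit group over $G'$ by a commutator argument: let $p\colon G\to G'$ be the projection and let $T\subseteq\Gamma'\setminus\{1\}$ be finite; for each $s\in T$ pick $t_s\in\Gamma'$ with $[s,t_s]\ne 1$, which is possible since $Z(\Gamma')=1$, and choose $\varphi_i$ non-trivial on the finite set $\{\,[s,t_s]:s\in T\,\}$. Because $\mathbb{Z}^n$ is abelian, the $\mathbb{Z}^n$-coordinate of $\varphi_i([s,t_s])$ vanishes, so $1\ne\varphi_i([s,t_s])$ lies in $G'$ and equals $[\,p\varphi_i(s),p\varphi_i(t_s)\,]$; hence $p\varphi_i(s)\ne 1$ for every $s\in T$. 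Thus $p\varphi_i|_{\Gamma'}\colon\Gamma'\to G'$ is non-trivial on all of $T$, and since $\Gamma'$ is finitely generated it is a limit group over $G'$.

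\emph{The ``moreover'' clause.} Apply the accessibility results of \cite{Montse} to $\Gamma'$. If $\mathrm{ht}_{G'}(\Gamma')=0$ then $\Gamma'$ is a Droms RAAG by the height-$0$ case. Otherwise, either $\Gamma'$ splits non-trivially as a free product, and one takes its Grushko decomposition, whose edge groups are trivial and whose Bass--Serre action is automatically acylindrical; or $\Gamma'$ is freely indecomposable, and one takes its cyclic JSJ decomposition, whose edge groups are infinite cyclic and whose acylindricity follows from the CSA-type properties of limit groups over Droms RAAGs recorded in \cite{Montse}. In either case the monotonicity of height under passing to JSJ vertex groups, established in \cite{Montse}, ensures that at least one vertex group is a limit group over $G'$ of height $\le h-1$. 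The main obstacle is the first step of the positive-height case: proving that $Z(\Gamma)$ is finitely generated free abelian and splits off as a direct factor with centerless complement, with rank controlled by $n$. This is exactly where the hierarchical structure of limit groups over Droms RAAGs from \cite{Montse} must be brought to bear — mirroring the way \cite{something1} relies on the structure of limit groups over free groups — while the remaining steps are routine manipulations with approximating homomorphisms together with the Droms subgroup theorem.
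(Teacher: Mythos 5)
The paper does not supply its own proof of Proposition~\ref{Proposition 1}: it is stated as a result of Casals-Ruiz, Duncan and Kazachkov, cited from \cite{Montse}, where the notion of height is defined and the structural decomposition is established. There is therefore no internal argument in this paper against which your proposal can be checked; you are in effect reconstructing the \cite{Montse} argument from the outside.

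With that caveat, the self-contained parts of your outline are sound. The height-$0$ case is correct given the standard definition of height (a height-$0$ limit group over $G$ is a finitely generated subgroup of $G$, hence a Droms RAAG by Droms's theorem), and your commutator argument showing that $\Gamma'$ is fully residually $G'$ is correct and pleasantly elementary: the $\mathbb{Z}^n$-coordinate of $\varphi_i([s,t_s])$ does vanish, so $p\varphi_i$ is non-trivial on every $s\in T$, and finite generation of $\Gamma'$ gives the limit-group conclusion. The substantive gap — which you flag yourself — is the claim that $\Gamma\cong\mathbb{Z}^m\times\Gamma'$ with $\Gamma'$ finitely generated and centerless, and $m=0$ when $n=0$. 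Your heuristic that ``any approximating homomorphism $\varphi_i$ carries $Z(\Gamma)$ into $Z(G)$'' is not literally true: a homomorphism only carries $Z(\Gamma)$ into $C_G(\varphi_i(\Gamma))$, not into $Z(G)$. To force $Z(\Gamma)=1$ when $n=0$ one needs something like: pick a non-commuting pair $a,b\in\Gamma$, take $\varphi$ non-trivial on $z$ and on $[a,b]$, observe $\varphi(z)$ centralizes the non-abelian subgroup $\langle\varphi(a),\varphi(b)\rangle$, and then invoke the structure of centralizers in Droms RAAGs (Servatius's centralizer theorem, or the CSA-type properties recorded in \cite{Montse}) to conclude. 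Likewise the ``moreover'' clause — acylindricity of the splitting and the strict drop in height at a vertex group — rests on the accessibility and hierarchy results of \cite{Montse} and cannot be produced by hand here. Since the paper itself defers the entire proposition to \cite{Montse}, your reconstruction is about as complete as the paper's own treatment allows, but you should be precise that the center-splitting step is a theorem of \cite{Montse} rather than a consequence of the naive ``center maps to center'' observation.
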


We now define a class of groups that contains limit groups over Droms RAAGs.

\begin{definition}
The class of finitely presented groups $\mathcal{C}$ is defined in a hierarchical manner. It is the union of the classes $\mathcal{C}_{n}$ defined as follows.

At level $0$ we have the class $\mathcal{C}_{0}$ consisting of groups of the form $\mathbb{Z}^n \times (A \ast B)$ where $n\in \mathbb{N}\cup \{0\}$ and $A$ and $B$ are non-trivial finitely presented groups where at least one of $A$ and $B$ has at least cardinality $3$.

A group $\Gamma$ lies in $\mathcal{C}_{n}$ if and only if $\Gamma \cong \mathbb{Z}^m \times \Gamma^\prime$ where $m\in \mathbb{N}\cup \{0\}$ and $\Gamma^\prime$ is a finitely generated acylindrical graph of finitely presented groups, where all of the edge groups are cyclic, at least one of the vertex groups lies in $\mathcal{C}_{n-1}$. Suppose that $\mathbb{Z}^{m^\prime} \times \Gamma^{\prime \prime}$ is such a vertex group that lies in $\mathcal{C}_{n-1}$. Then, if $m=0$, $m^\prime=0$.
\end{definition}

\begin{property}\label{Property (3)} If $\Gamma$ is a limit group over a Droms RAAG, it lies in $\mathcal{C}$.
\end{property}
\begin{proof}
If $\Gamma \cong \mathbb{Z}^n \times \Gamma^\prime$ is a limit group over a Droms RAAG of height $0$ (see Proposition~\ref{Proposition 1}) or if $\Gamma^\prime$ admits a non-trivial free decomposition or if $\Gamma^\prime$ is a Droms RAAG, it clearly lies in $\mathcal{C}$. If it has height $\geq 1$ and $\Gamma^\prime$ is freely-indecomposable, it is an immediate consequence of Proposition~\ref{Proposition 1}.
\end{proof}

Finally, we prove other two properties that will be used in the proofs of the results.

\begin{property}\label{Property (4)}  Limit groups over Droms RAAGs are cyclic subgroup separable.

\end{property}

\begin{proof}
Let us first show that if $G$ is a group, $\mathbb{Z}^m \times G$ is cyclic subgroup separable if and only if $G$ is. Since $G \hookrightarrow \mathbb{Z}^m \times G$, if $\mathbb{Z}^m \times G$ is cyclic subgroup separable, then so is $G$. Now suppose that $G$ is cyclic subgroup separable and $H < \mathbb{Z}^m \times G$ is a cyclic subgroup. Then $H \cong (H \cap G) \times \pi(H)$, where $\pi$ is the projection map $\mathbb{Z}^m \times G \mapsto \mathbb{Z}^m$. The group $H$ is cyclic, so $H \cap G < G$ and $\pi(H) < \mathbb{Z}^m$ are cyclic (or trivial). Let $(g_{1},g_{2})$ be an element of $( \mathbb{Z}^m \times G) \setminus H$. Then, either $g_{1}\in \mathbb{Z}^m \setminus \pi(H)$ or $g_{2}\in G \setminus H \cap G$. In the first case, since $\mathbb{Z}^m$ is cyclic subgroup separable, there is a homomorphism $f \colon \mathbb{Z}^m \mapsto Q$ with $Q$ a finite group. Hence, if we compose the projection map $\pi$ with $f$ we obtain what we want. The second case is symmetric.

Now let $G$ be a Droms RAAG and $\Gamma$ be a limit group over $G$. Let us prove the property by induction on the level in the hierarchy where $G$ first appears (see Remark~\ref{DromsRAAG}). We will check that $\Gamma$ is a subgroup of a group $K$ that can be constructed hierarchically by amalgamating over cyclic subgroups. Then, we will use \cite[Theorem 3.6]{Burillo} to show that $K$ is cyclic subgroup separable. \cite[Theorem 3.6]{Burillo} states that if $A$ and $B$ are quasi-potent and cyclic subgroup separable and $C$ is a virtually cyclic common subgroup, then $A \ast_{C} B$ is cyclic subgroup separable. Finally, since $K$ is cyclic subgroup separable, so is $\Gamma$.

If the level equals $0$, $G$ is $\mathbb{Z}$, so $\Gamma$ is free abelian and so it is cyclic subgroup separable and quasi-potent. Now assume that limit groups over Droms RAAGs of level $n-1$ are cyclic subgroup separable and quasi-potent. Let $G$ be a Droms RAAG of level $n$. By the discussion above, we can assume that $G$ is the free product of Droms RAAGs $G_{i}$ of level at most $n-1$. It follows from \cite{Montse3, Jaligot} that $\Gamma$ is a finitely generated subgroup of an ICE group $K$, where ICE is the smallest class of groups containing all finitely generated free products of limit groups over $G_{i}$ and is closed under amalgamation over cyclic groups. It follows now from \cite[Theorem 3.6]{Burillo} and the induction hypothesis that $K$ is indeed cyclic subgroup separable.
\end{proof}

\begin{property}\label{Property (5)} Let $\Gamma$ be a Droms RAAG with trivial center and let $S$ be a subgroup of $\Gamma$ such that $H_{1}(S; \mathbb{Q})$ is finite dimensional. Then $S$ is finitely generated (and hence a limit group over a Droms RAAG).
\end{property}

\begin{proof}
\cite[Theorem 2]{normalizer} states that if $\Gamma$ is a limit group over a free group and $S$ is a subgroup of $\Gamma$ such that $H_{1}(S; \mathbb{Q})$ is finite $\mathbb{Q}$-dimensional, then $S$ is finitely generated. If we check that \cite[Lemma 4.1]{normalizer} holds also in the case of limit groups over Droms RAAGs, then we can use the same proof as in \cite[Theorem 2]{normalizer} in order to show our property. That is, it suffices to check that if $\Gamma$ has trivial center and it is residually a Droms RAAG and $S< \Gamma$ is a non-cyclic subgroup, then $H_{1}(S ; \mathbb{Q})$ has dimension at least $2$.

Indeed, if $S$ is abelian the result is obvious. If not, there are two elements $s,t\in S$ such that $[s,t]\neq 1$ in $S$ (they do not commute). Since $\Gamma$ is residually a Droms RAAG, there is $\phi \colon \Gamma \mapsto G$ a homomorphism with $G$ a Droms RAAG such that $[\phi(s), \phi(t)]\neq 1$. Then, $\langle \phi(s), \phi(t) \rangle$ is a free group (see \cite[Theorem 1.2]{Baudisch}). Thus $S$ maps onto a non-abelian Droms RAAG and hence onto a free abelian group of rank at least $2$.
\end{proof}

\section{From Theorem~\ref{Theorem C} to Theorem~\ref{Theorem A}}
\label{Section 3}

Let us fix the notation that is used throughout the paper: $S$ is a subgroup of the direct product of limit groups over Droms RAAGs $\Gamma_{i}$, $L_{i}$ is the intersection $S \cap \Gamma_{i}$ and $p_{i}$ is the projection map $\Gamma_{1}\times \cdots \times \Gamma_{n} \mapsto \Gamma_{i}$.

The goal of this section is to show that Theorem~\ref{Theorem A} follows from Theorem~\ref{Theorem C}. Let us recall both theorems:

\begin{theorem}\label{Theorem A}
If $\Gamma_{1},\dots, \Gamma_{n}$ are limit groups over Droms RAAGs and $S$ is a subgroup of $\Gamma_{1}\times \cdots \times \Gamma_{n}$ of type $FP_{n}(\mathbb{Q})$, then $S$ is virtually a direct product of limit groups over Droms RAAGs.
\end{theorem}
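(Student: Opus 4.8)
The plan is to deduce Theorem~\ref{Theorem A} from Theorem~\ref{Theorem C} by reducing the general situation to a full subdirect product of limit groups over Droms RAAGs \emph{with trivial centre}, the new ingredient being a careful treatment of the centres. First I would make two harmless reductions. Replacing each $\Gamma_{i}$ by $p_{i}(S)$, which is finitely generated and hence a limit group over a Droms RAAG by Property~\ref{Property (2)}, we may assume that $S$ is a subdirect product; the hypothesis $FP_{n}(\mathbb{Q})$ is intrinsic to $S$ and is unaffected. Next, iterating Proposition~\ref{Proposition 1} (on the height, and on the number of vertices of the underlying Droms RAAG) one writes $\Gamma_{i}\cong Z_{i}\times\Lambda_{i}$, where $Z_{i}=Z(\Gamma_{i})$ is free abelian and $\Lambda_{i}$ is a limit group over a Droms RAAG with trivial centre (possibly $\Lambda_{i}$ is trivial, i.e.\ $\Gamma_{i}$ is abelian); hence $\Gamma_{1}\times\cdots\times\Gamma_{n}\cong Z\times\Lambda$ with $Z=\prod_{i}Z_{i}$ central and free abelian and $\Lambda=\prod_{i}\Lambda_{i}$.

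The core of the argument runs as follows. Let $\bar{S}$ be the image of $S$ under the projection $Z\times\Lambda\to\Lambda$: it is a finitely generated subdirect product of $\Lambda_{1},\dots,\Lambda_{n}$, and the kernel of $S\twoheadrightarrow\bar{S}$ equals $S\cap Z$, a free abelian group that lies in the centre of $S$. Discarding one at a time the factors $\Lambda_{i}$ that $\bar{S}$ meets trivially — each such step alters neither $\bar{S}$ up to the natural isomorphism nor the kernel $S\cap Z$ — we obtain a \emph{full} subdirect product $\bar{S}<\prod_{i\in J}\Lambda_{i}$ with $|J|\leq n$; if $J=\varnothing$ then $S$ is finitely generated abelian and there is nothing to prove. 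Now apply Theorem~\ref{Theorem C} to $\bar{S}$. If some finite index subgroup $\bar{S}_{0}<\bar{S}$ had $\dim_{\mathbb{Q}}H_{j}(\bar{S}_{0};\mathbb{Q})=\infty$ for some $j\leq|J|\leq n$, then for the finite index preimage $S_{0}<S$ the central extension $1\to S\cap Z\to S_{0}\to\bar{S}_{0}\to 1$, together with the Lyndon--Hochschild--Serre spectral sequence — whose coefficients $H_{\ast}(S\cap Z;\mathbb{Q})$ are trivial modules since the extension is central — would force $\dim_{\mathbb{Q}}H_{j'}(S_{0};\mathbb{Q})=\infty$, where $j'\leq n$ is least with $\dim_{\mathbb{Q}}H_{j'}(\bar{S}_{0};\mathbb{Q})=\infty$; this contradicts that $S_{0}$, being of finite index in $S$, is of type $FP_{n}(\mathbb{Q})$. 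Hence Theorem~\ref{Theorem C} yields that $\bar{S}$ has finite index in $\prod_{i\in J}\Lambda_{i}$.

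It then remains to lift this conclusion back to $S$. Since $\bar{S}$ has finite index in $\prod_{i\in J}\Lambda_{i}$, the subgroups $\Lambda_{i}^{0}:=\bar{S}\cap\Lambda_{i}$ have finite index in $\Lambda_{i}$ — so are limit groups over Droms RAAGs by Property~\ref{Property (2)} — and $P:=\prod_{i\in J}\Lambda_{i}^{0}$ has finite index in $\bar{S}$; its preimage $S_{1}<S$ has finite index and sits in a central extension $1\to S\cap Z\to S_{1}\to P\to 1$. Finally, since $S_{1}$ lies inside $Z\times\Lambda$ with $S_{1}\cap Z$ central and the image of $S_{1}$ in $\Lambda$ isomorphic to $P$, a ``graph of a homomorphism'' argument — after passing to a further finite index subgroup to make $S_{1}\cap Z$ a direct summand of its image in $Z$ and the resulting abelian quotient torsion free — shows that up to finite index $S_{1}\cong(S\cap Z)\times P$, which is a direct product of limit groups over Droms RAAGs. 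Therefore $S$ is virtually a direct product of limit groups over Droms RAAGs.

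I expect the main difficulty to be organisational rather than conceptual: checking that iterating Proposition~\ref{Proposition 1} really splits $\Gamma_{i}$ as (free abelian)$\times$(trivial centre); that discarding $\Lambda$-factors genuinely preserves the central kernel $S\cap Z$; and that the several passages to finite index in the final step can be carried out simultaneously. The one point that must be got right conceptually is that the ``infinite-dimensional homology'' alternative of Theorem~\ref{Theorem C} survives the central extension $S\cap Z\hookrightarrow S$, since this is precisely what converts the hypothesis $FP_{n}(\mathbb{Q})$ into the statement that $\bar{S}$ has finite index.
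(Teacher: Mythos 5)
Your proposal is correct in outline, but it takes a genuinely different route from the paper for the reduction to trivial centre, and the difference matters. The paper's Proposition~\ref{Proposition 2}(4) asserts that $S$ is \emph{isomorphic} to $\bigl(S\cap(\Gamma_1'\times\cdots\times\Gamma_n')\bigr)\times\pi(S)$, where $\pi$ is the projection onto the central free abelian part $Z$. This claimed isomorphism is not available in general once different factors are allowed to have centres of different ranks: $S$ can realise a homomorphism from the non-central part of one $\Gamma_j$ onto a piece of the centre of another $\Gamma_i$, in which case $S\cap\Lambda$ (with $\Lambda=\prod\Gamma_i'$) is not even finitely generated while $S$ is. For example, with $\Gamma_1=\mathbb{Z}\times F_2$, $\Gamma_2=F_2$, and $S\cong F_2\times F_2$ embedded as the graph of an epimorphism $F_2\to\mathbb{Z}=Z(\Gamma_1)$, one gets $S\cap\Lambda\cong F_2\times(\text{infinite rank free group})$, so $S\not\cong(S\cap\Lambda)\times\pi(S)$. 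Your route instead keeps the central extension $1\to S\cap Z\to S\to\bar{S}\to 1$ and transports the ``infinite-dimensional homology'' alternative of Theorem~\ref{Theorem C} up to $S$ via the Lyndon--Hochschild--Serre spectral sequence. That computation is correct: since the extension is central and coefficients are in $\mathbb{Q}$, each $E^2_{p,q}$ is a finite direct sum of copies of $H_p(\bar{S}_0;\mathbb{Q})$, and minimality of $j'$ makes all incoming and outgoing differentials at $E^r_{j',0}$ finite-dimensional, so $E^\infty_{j',0}$, a quotient of $H_{j'}(S_0;\mathbb{Q})$, stays infinite-dimensional; contradiction with $S_0$ being $FP_n(\mathbb{Q})$. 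This is the right argument, and it is the argument that makes the reduction to trivial centre genuinely work.

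The place that needs tightening is the final lifting step. Saying ``$S_1\cong(S\cap Z)\times P$ up to finite index'' elides two points. First, the correct mechanism is: pass to a finite-index $S_1'\le S_1$ so that $\pi_Z(S_1')/(S_1'\cap Z)$ is free abelian of some rank $k$; then the composite $\tau\colon S_1'\to\mathbb{Z}^k$ is surjective with $\ker\tau=(S_1'\cap Z)\times(S_1'\cap\Lambda)$, the map $\tau$ splits, and since $S_1'\cap Z$ is central one gets $S_1'\cong(S_1'\cap Z)\times\rho(S_1')$ — the complement you construct is $\rho(S_1')$, not $P$. Second, $\rho(S_1')$ is only a finite-index subgroup of $P$ and need not itself be a direct product; you need one more pass, replacing $\rho(S_1')$ by $\prod_{i\in J}\bigl(\rho(S_1')\cap\Lambda_i\bigr)$ and pulling back to a further finite-index subgroup of $S_1'$. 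With those adjustments the argument is complete.
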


\begin{theoremC}
Let $\Gamma_{1},\dots,\Gamma_{n}$ be limit groups over Droms RAAGs with trivial center and let $S< \Gamma_{1}\times \cdots \times \Gamma_{n}$ be a finitely generated full subdirect product. Then either:\\[3pt]
(1) $S$ is of finite index; or\\[3pt]
(2) $S$ is of infinite index and has a finite index subgroup $S_{0}<S$ such that $H_{j}(S_{0}; \mathbb{Q})$ has infinite dimension for some $j\leq n$.
\end{theoremC}

We first reduce Theorem~\ref{Theorem A} to a simpler case:

\begin{proposition}\label{Proposition 2}
If Theorem~\ref{Theorem A} holds under the below assumptions (1)--(5), Theorem~\ref{Theorem A} holds in general:\\[5pt]
(1) $n\geq 2$.\\[3pt]
(2) Each projection map $p_{i} \colon S \mapsto \Gamma_{i}$ is surjective.\\[3pt]
(3) Each intersection $L_{i}= S \cap \Gamma_{i}$ is non-trivial.\\[3pt]
(4) Each $\Gamma_{i}$ has trivial center.\\[3pt]
(5) Each $\Gamma_{i}$ splits as a HNN extension over an infinite cyclic subgroup $C_{i}$ or the trivial group with stable letter $t_{i}\in L_{i}$.
\end{proposition}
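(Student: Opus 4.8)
The plan is to perform a sequence of reductions, each of which preserves the conclusion of Theorem A, bringing an arbitrary $S \leq \Gamma_1 \times \cdots \times \Gamma_n$ of type $FP_n(\mathbb{Q})$ into the shape described by (1)--(5). Throughout I will repeatedly use the standard facts that $FP_n(\mathbb{Q})$ passes to finite-index subgroups and overgroups of finite index, that a direct factor of a group of type $FP_n(\mathbb{Q})$ need not be but a retract is, and — crucially — that by Property~\ref{Property (2)} finitely generated subgroups of limit groups over Droms RAAGs are again limit groups over Droms RAAGs, so all the ambient factors we produce stay in the right class. I would also note at the outset that "virtually a direct product of limit groups over Droms RAAGs" is itself stable under passing to finite-index subgroups and overgroups, so it suffices to prove the statement for any finite-index subgroup of $S$.

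First I would dispose of $n \leq 1$: for $n=1$, $S$ is a finitely generated (indeed $FP_1(\mathbb{Q})$) subgroup of a limit group over a Droms RAAG, hence itself one by Property~\ref{Property (2)}, so (1) may be assumed. Next, to arrange (2), replace each $\Gamma_i$ by $\overline{\Gamma_i} := p_i(S)$, which is a finitely generated subgroup of $\Gamma_i$ and so again a limit group over a Droms RAAG; then $S$ is a subdirect product of the $\overline{\Gamma_i}$ and the finiteness hypothesis is unchanged. To arrange (3), I would argue by induction on $n$: if some $L_i = S \cap \Gamma_i$ is trivial, then $p_j\colon S \to \prod_{j\neq i}\Gamma_j$ is injective on... more precisely, using that $L_i \trianglelefteq S$ is trivial one shows $S$ embeds as a subdirect product into $\prod_{j \neq i}\Gamma_j$ (the projection away from the $i$-th coordinate is injective because its kernel is contained in $\Gamma_i \cap S = L_i = 1$), reducing to $n-1$ factors; the finiteness property $FP_n(\mathbb{Q})$ implies $FP_{n-1}(\mathbb{Q})$, so the inductive hypothesis applies. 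For (4), decompose each $\Gamma_i \cong \mathbb{Z}^{m_i} \times \Gamma_i'$ as in Proposition~\ref{Proposition 1} (iterating until the non-abelian part has trivial center, which is possible since Droms RAAGs and the relevant limit groups have centers that split off as free abelian direct factors); the $\mathbb{Z}^{m_i}$ factors can be absorbed into a single free abelian direct factor and handled separately, since $\mathbb{Z}^m \times (\text{virtual direct product of limit groups})$ is again of the desired form, and one checks the projection of $S$ to the product of the trivial-center parts still satisfies the hypotheses after passing to a finite-index subgroup (the abelian part contributes a finitely generated, hence $FP_\infty(\mathbb{Q})$, central direct factor).

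The last and most delicate reduction is (5): arranging that each $\Gamma_i$ splits as an HNN extension over a cyclic or trivial edge group with stable letter lying in $L_i$. Here I would invoke the hierarchical structure from Proposition~\ref{Proposition 1}: a limit group over a Droms RAAG with trivial center either is a (non-abelian, non-cyclic) Droms RAAG, or admits an acylindrical graph-of-groups decomposition with trivial or infinite cyclic edge groups. A non-abelian Droms RAAG $GX$ with trivial center has a vertex $v$ not adjacent to all others, giving a free product / HNN splitting; in general, passing to a finite-index subgroup $\Gamma_i^0 \leq \Gamma_i$ one can find an HNN splitting whose stable letter can be taken inside $L_i^0 = S^0 \cap \Gamma_i^0$ — here one uses that $L_i$ is normal in $S$ and, being nontrivial in a group with trivial center, is "large" (e.g. contains a nonabelian or infinite subgroup), together with cyclic subgroup separability (Property~\ref{Property (4)}) to pass to a finite-index subgroup realizing an element of $L_i$ as a stable letter. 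This is the step I expect to be the main obstacle: one must simultaneously (a) produce the cyclic HNN splitting from the JSJ/graph-of-groups data, (b) control it well enough that a stable letter lies in the intersection $L_i$, and (c) ensure that after replacing $S$ by a suitable finite-index subgroup $S_0$ and each $\Gamma_i$ by $p_i(S_0)$ all of conditions (1)--(4) still hold. I would organize (c) by choosing one finite-index subgroup of $S$ up front that works for all $i$ at once (intersecting finitely many finite-index subgroups), and verify (1)--(4) are inherited — (1) is trivial, (2) holds by construction, (3) holds because $L_i^0 \geq L_i \cap S_0$ has finite index in $L_i$ hence is nontrivial, and (4) holds because having trivial center is inherited by finite-index... no: rather one re-runs the Proposition~\ref{Proposition 1} decomposition on $p_i(S_0)$, which is again a limit group over a Droms RAAG. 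Assembling these reductions, any $S$ of type $FP_n(\mathbb{Q})$ has a finite-index subgroup to which the restricted form of Theorem A applies, and pulling the conclusion back through the reductions (each of which only added free abelian direct factors, split off $\Gamma_i$-free coordinates, or passed to finite index) yields that $S$ itself is virtually a direct product of limit groups over Droms RAAGs.
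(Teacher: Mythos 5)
Your reductions (1)--(3) coincide with the paper's, and (4) is in the same spirit (the paper asserts $S \cong (S \cap (\Gamma_1^\prime \times \cdots \times \Gamma_n^\prime)) \times \pi(S)$ with $\pi$ the projection to the central factor; your version projects rather than intersects, and both are terse about why the complement is a \emph{direct} rather than merely semidirect factor).

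The genuine gap is (5). You flag it as the main obstacle but the tools you propose are not the right ones: cyclic subgroup separability (Property~\ref{Property (4)}) plays no role here, and there is no clear route from it to ``realizing an element of $L_i$ as a stable letter.'' The paper's argument instead runs through Bass--Serre theory. By Proposition~\ref{Proposition 1}, each $\Gamma_i$ (now with trivial center) acts acylindrically on a tree $T$ with trivial or cyclic edge stabilizers; because $p_i(S)=\Gamma_i$ by (2), the subgroup $L_i$ is normal in all of $\Gamma_i$ (you only asserted normality in $S$, which would not suffice here), and nontrivial by (3), so by \cite[Corollary 2.2]{Bridson3} it contains a hyperbolic isometry $t_i$ of $T$. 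Then \cite[Theorem 3.1]{Bridson3} produces a finite-index subgroup $\Lambda_i \leq \Gamma_i$ that is an HNN extension over a trivial or cyclic subgroup with stable letter exactly $t_i$; replacing each $\Gamma_i$ by $\Lambda_i$ and $S$ by the finite-index subgroup $S \cap (\Lambda_1 \times \cdots \times \Lambda_n)$ completes the reduction. Your organizational point (c) about rechecking (1)--(4) is correct, but the core technical input --- extracting a finite-index HNN splitting with a prescribed stable letter from the acylindrical action on a tree --- is what your sketch is missing.
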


\begin{proof}
(1) If $n=1$, $S< \Gamma_{1}$ has type $FP_{1}(\mathbb{Q})$, so it is finitely generated. Therefore, it is a limit group over a Droms RAAG.\\[3pt]
(2) Since $S$ has type $FP_{n}(\mathbb{Q})$ for some $n\geq 2$, $S$ is in particular finitely generated. Hence, $p_{i}(S)$ is also finitely generated; in particular, it is a limit group over a Droms RAAG. Therefore, we can replace each $\Gamma_{i}$ by $p_{i}(S)$.\\[3pt]
(3) Assume that there is a $L_{i}$, say $L_{n}$, which is trivial. Then, the projection map $q_{n} \colon S \mapsto \Gamma_{1}\times \cdots \times \Gamma_{n-1}$ is injective, so \[S \cong q_{n}(S) < \Gamma_{1} \times \cdots \times \Gamma_{n-1}.\] After iterating this argument, we may assume that each $L_{i}$ is non-trivial.\\[3pt]
(4) Assume that $\Gamma_{i} \cong \mathbb{Z}^{m_{i}}\times \Gamma_{i}^\prime$ for $m_{i}\in \mathbb{N} \cup \{0\}$ and $\Gamma_{i}^\prime$ is a limit group over a Droms RAAG with trivial center, $i\in \{1,\dots,n\}$. Then, \[\Gamma_{1}\times \cdots \times \Gamma_{n} \cong \mathbb{Z}^{m_{1}+\dots + m_{n}} \times (\Gamma_{1}^\prime \times \cdots \times \Gamma_{n}^\prime).\] Then, $S$ is isomorphic to $ S \cap (\Gamma_{1}^\prime \times \cdots \times \Gamma_{n}^\prime) \times \pi(S)$, where $\pi$ denotes the projection map \[ \Gamma_{1}\times \cdots \times \Gamma_{n} \mapsto \mathbb{Z}^{m_{1}+\dots + m_{n}}.\] But then $ S \cap (\Gamma_{1}^\prime \times \cdots \times \Gamma_{n}^\prime)$ is of type $FP_{n}(\mathbb{Q})$. Since $\pi(S)$ is a limit group over a Droms RAAG, if Theorem~\ref{Theorem A} holds in the case where all the $\Gamma_{i}$ have trivial center, then $S$ is virtually a direct product of limit groups over Droms RAAGs.\\[3pt]
(5) The subgroup $L_{i}$ of $\Gamma_{i}$ is normal by (2) and non-trivial by (3). If we denote by $T$ the Bass-Serre tree corresponding to the splitting described in Proposition~\ref{Proposition 1}, by \cite[Corollary 2.2]{Bridson3} $L_{i}$ contains a hyperbolic isometry $t_{i}$. Then, by \cite[Theorem 3.1]{Bridson3}, there is $\Lambda_{i}$ a finite index subgroup in $\Gamma_{i}$ such that $\Lambda_{i}$ is a HNN extension with stable letter $t_{i}$ and with trivial or cyclic edge stabilizer.

We can replace each $\Gamma_{i}$ by the subgroup of finite index $\Lambda_{i}$ and $S$ by $T= S \cap (\Lambda_{1}\times \cdots \times \Lambda_{n})$. Then, $T$ has finite index in $S$, so in order to prove Theorem~\ref{Theorem A} it suffices to show that $T$ is virtually the direct product of limit groups over Droms RAAGs.
\end{proof}
Finally, assume that Theorem~\ref{Theorem C} holds and let us prove Theorem~\ref{Theorem A}. Assume that the group $S$ is as in Theorem~\ref{Theorem A} with the additional assumptions of Proposition~\ref{Proposition 2}. By Theorem~\ref{Theorem C}, $S$ has finite index in $\Gamma_{1}\times \cdots \times \Gamma_{n}$ or $S$ has infinite index and there is a subgroup $S_{0}< S$ of finite index in $S$ such that $H_{j}(S_{0};\mathbb{Q})$ has infinite dimension for some $j\in \{0,\dots,n\}$. In particular, $S_{0}$ is not of type $FP_{j}(\mathbb{Q})$, and since $S_{0}$ has finite index in $S$, this implies that $S$ is not of type $FP_{j}(\mathbb{Q})$. Thus, $S$ is not of type $FP_{n}(\mathbb{Q})$.

\section{Organization of the proof of Theorem~\ref{Theorem C}}
\label{Section 4}

We have shown in the previous section that Theorem~\ref{Theorem A} follows from Theorem~\ref{Theorem C}. We now focus on proving Theorem~\ref{Theorem C} so the properties of Proposition~\ref{Proposition 2} can be assumed. The proof extends from Section~\ref{Section 5} to Section~\ref{Section 8}.

In Section~\ref{Section 5} we prove the following generalisation of the result that non-trivial, finitely generated normal subgroups in free products have finite index.

\begin{section5}
Let $\Gamma$ be a group in the class $\mathcal{C}$ with trivial center and suppose $\Gamma$ has subgroups $1 \neq N < G < \Gamma$ with $N$ normal in $\Gamma$ and $G$ finitely generated. Then $| \Gamma \colon G | < \infty$.
\end{section5}

In section 6 we use this result and Proposition~\ref{Proposition 2} (5) to deduce the following useful fact (see Section~\ref{Section 6}):

\begin{section6}
Let $\Gamma_{1},\dots, \Gamma_{n}$ be limit groups over Droms RAAGs with the properties of Proposition~\ref{Proposition 2}. Suppose that $H_{2}(S_{1}; \mathbb{Q})$ is finite dimensional for all $S_{1}<S$ finite dimensional. Then, $\Gamma_{j} \slash L_{j}$ is virtually nilpotent.
\end{section6}

We also state some properties of virtually nilpotent groups in Section~\ref{Section 6} that will be used in the proof of Theorem~\ref{Theorem C}.

In Section~\ref{Section 7} we show that there is a subgroup $N_{0}$ in $\Gamma_{1}\times \cdots \times \Gamma_{n}$ with infinite dimensional $k$-th homology for some $k\in \{0,\dots,n\}$. More precisely, we prove the following: 

\begin{section7}
Let $\Gamma_{1},\dots, \Gamma_{n}$ be limit groups over Droms RAAGs with trivial center, and $N$ the kernel of an epimorphism $\Gamma_{1}\times \cdots \Gamma_{n} \mapsto \mathbb{Z}$. Then, there is a subgroup of finite index $N_{0}\subseteq N$ such that at least one of the homology groups $H_{k}(N_{0}; \mathbb{Q})$ has infinite dimension.
\end{section7}

Finally, Section~\ref{Section 8} is devoted to the proof of Theorem~\ref{Theorem C} which uses an inductive argument and the lower central series obtained from the fact that $\Gamma_{j} \slash L_{j}$ is virtually nilpotent.

\section{Proof of Theorem~\ref{Theorem Section 5}}
\label{Section 5}

In this section we prove Theorem~\ref{Theorem Section 5}.

\begin{theorem}\label{Theorem Section 5}
Let $\Gamma$ be a group in the class $\mathcal{C}$ with trivial center and suppose $\Gamma$ has subgroups $1 \neq N < G < \Gamma$ with $N$ normal in $\Gamma$ and $G$ finitely generated. Then $| \Gamma \colon G | < \infty$.
\end{theorem}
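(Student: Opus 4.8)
The plan is to prove the statement by induction on the least $n$ with $\Gamma\in\mathcal{C}_{n}$, the base case being $n=0$.

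\emph{Setup.} Since $Z(\Gamma)=1$, the decomposition $\Gamma\cong\mathbb{Z}^{m}\times\Gamma^\prime$ witnessing membership in $\mathcal{C}_{n}$ has $m=0$. Thus in the base case $\Gamma$ is a free product $A\ast B$ of finitely presented groups with, say, $|A|\geq 3$, and in the inductive step $\Gamma=\Gamma^\prime$ is the fundamental group of a finite acylindrical graph of finitely presented groups with cyclic edge groups, at least one vertex group lying in $\mathcal{C}_{n-1}$. Let $T$ be the Bass--Serre tree of this splitting; after passing to the minimal $\Gamma$-invariant subtree, $\Gamma$ acts minimally and acylindrically on $T$ with trivial (base case) or cyclic edge stabilisers. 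One checks by induction that groups in $\mathcal{C}_{n-1}$ are not virtually cyclic, and hence that the action is non-elementary: $\Gamma$ fixes no point and no end of $T$ (a minimal action fixing an end would force a vertex group to be virtually cyclic, against the presence of a $\mathcal{C}_{n-1}$ vertex group).

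\emph{A hyperbolic element, and reduction to vertex groups.} Because $N$ is normal, the fixed-point set $T^{N}$ and the set of $N$-fixed ends are $\Gamma$-invariant; minimality and non-elementarity then rule out $N$ being elliptic or fixing an end — if $T^{N}\neq\emptyset$ then $T^{N}=T$, so $N$ lies in the kernel of the action, which is finite by acylindricity and hence trivial (a nontrivial finite normal subgroup is incompatible with $\Gamma\in\mathcal{C}$ and $Z(\Gamma)=1$). Hence $N$ contains a hyperbolic isometry $h$. For each $g\in\Gamma$ the conjugate $ghg^{-1}\in N\subseteq G$ is hyperbolic with axis $g\cdot\mathrm{axis}(h)$, and the convex hull of $\bigcup_{g}g\cdot\mathrm{axis}(h)$ is a non-empty $\Gamma$-invariant subtree, hence all of $T$; so the minimal $G$-invariant subtree is $T$. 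As $G$ is finitely generated, $G\backslash T$ is a finite graph, the vertex groups $G_{v}:=G\cap\Gamma_{v}$ are finitely generated, and the double-coset computation gives $[\Gamma:G]=\sum_{w}[\Gamma_{w}:G_{w}]$, the sum running over the finitely many $G$-orbits of vertices $w$ in a fixed $\Gamma$-orbit. So it suffices to prove $[\Gamma_{v}:G_{v}]<\infty$ for every vertex $v$.

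\emph{Vertices with small edge groups.} If every edge at $v$ has trivial or finite stabiliser — in particular every vertex of $T$ in the base case $\Gamma=A\ast B$ — then $\Gamma_{v}$ acts on the set $E_{v}$ of incident edges with finite stabilisers and transitively on each $\Gamma_{v}$-orbit, while $G_{v}$ has only finitely many orbits on $E_{v}$ (the natural map $E_{v}\to G\backslash T^{1}$ has finite image and fibres that are unions of at most two $G_{v}$-orbits). Counting $G_{v}$-orbits inside each coset space $\Gamma_{v}/\Gamma_{e}$, where $\Gamma_{e}$ is finite, yields $[\Gamma_{v}:G_{v}]<\infty$. This finishes the base case and, in the inductive step, handles every vertex whose incident edge groups are finite.

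\emph{The main obstacle.} The remaining case is a vertex $v$ incident to an \emph{infinite} cyclic edge group, where orbit-counting fails (e.g.\ a rank-one subgroup of $\mathbb{Z}^{2}$ can have finitely many orbits on such a coset space yet infinite index). The intended approach is to note that $N_{v}:=N\cap\Gamma_{v}$ is normal in $\Gamma_{v}$ and contained in $G_{v}$, and then apply the inductive hypothesis to $\Gamma_{v}$ — which, for the $\mathcal{C}_{n-1}$ vertex group, is genuinely a trivial-centre member of $\mathcal{C}_{n-1}$, since the clause ``if $m=0$ then $m^\prime=0$'' in the definition of $\mathcal{C}_{n}$ together with $Z(\Gamma)=1$ kills the central free-abelian factor of that vertex group. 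The two points I expect to be the crux are: (i) showing that $N$ does not act freely on $T$ — equivalently, that some $N_{v}$ is nontrivial — which should follow from acylindricity together with $Z(\Gamma)=1$ (a nontrivial normal subgroup cannot meet every vertex group of the splitting trivially); and (ii) arranging matters so that the vertex group carrying a nontrivial $N_{v}$ is one to which either the inductive hypothesis or the ``small edge groups'' argument applies. Granting (i) and (ii), every $[\Gamma_{v}:G_{v}]$ is finite, and the displayed formula gives $[\Gamma:G]<\infty$, completing the induction.
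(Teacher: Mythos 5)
Your overall strategy --- induct on the level $n$ with $\Gamma\in\mathcal{C}_n$ and reduce to the vertex groups of the Bass--Serre tree --- resembles the paper's on the surface, but the quantity you carry through the induction is different from the paper's, and that difference is where the argument breaks. The paper does \emph{not} try to propagate the normal subgroup $N$ down to the vertex groups. It first uses $N$ only once, to deduce that $G\backslash T$ is finite; this yields the finiteness of the double coset space $|G\backslash\Gamma/\Gamma_e|$ for an edge $e$, and the induction is then carried out for the \emph{cleaner} hypothesis ``$D<\Gamma$ is cyclic, $G$ is finitely generated, and $|G\backslash\Gamma/D|<\infty$'' (Proposition~\ref{Proposition Section 5}), splitting into cases according to whether a generator of $D$ is elliptic or hyperbolic on the relevant tree. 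That hypothesis survives passage to a vertex group $\Gamma_w\in\mathcal{C}_{n-1}$ essentially by counting edges of the finite quotient graph, with no reference to $N$ at all.

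Your plan, by contrast, wants to feed $N_v=N\cap\Gamma_v$ into the inductive hypothesis, and this is where the gap lies. Your point (i), that some $N_v$ is nontrivial because ``a nontrivial normal subgroup cannot meet every vertex group of the splitting trivially,'' is simply false. Already $[F_2,F_2]\triangleleft F_2$ acts freely on the Bass--Serre tree of $F_2=\mathbb{Z}\ast\mathbb{Z}$; and the phenomenon persists with infinite cyclic edge groups: in the $2$-acylindrical amalgam $F_3\cong\langle a,b\rangle\ast_{\langle a\rangle}\langle a,b'\rangle$ the normal closure of $bb'^{-1}$ is nontrivial, normal, and intersects every conjugate of both vertex groups trivially (the quotient map to $\langle a,b\rangle$ is injective on each vertex group). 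Neither acylindricity nor triviality of the centre rescues the claim, and you offer no argument specific to the splittings arising in the definition of $\mathcal{C}$. Once $N$ acts freely your entire reduction has nothing to hand to the inductive hypothesis, since $N_{w}=1$ for the $\mathcal{C}_{n-1}$ vertex and the small-edge-group orbit count does not apply to an infinite cyclic edge group. This is exactly the obstruction the paper's reformulation in terms of $|G\backslash\Gamma/D|$ is designed to avoid, so you would need to either prove (i) in the specific situation at hand (which I doubt is possible) or switch to tracking the double-coset condition as the paper does.

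A secondary remark: your formula $[\Gamma:G]=\sum_w[\Gamma_w:G_w]$ is correct only when the sum runs over $G$-orbit representatives inside a single $\Gamma$-orbit of vertices; you can and should fix that orbit to be the one containing the $\mathcal{C}_{n-1}$ vertex group, so that every term has a conjugate of $\Gamma_w$ (this also makes all the $N_{w'}$ conjugate, which you need). This point is repairable; the failure of (i) is not.
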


\begin{proof}
Let $\Gamma \in \mathcal{C}$ with trivial center. Then, $\Gamma$ acts non-trivially, acylindrically, cocompactly and minimally on a tree $T$ with trivial or cyclic edge stabilizers. By \cite[Lemma 5.1]{something1}, $G \backslash T$ is finite. If the edge stabilizers are trivial, this implies that $| \Gamma \colon G|$ is finite because the number of edges in $G \backslash T$ is an upper bound for that number.

If the edge stabilizers are cyclic, $|G \backslash \Gamma \slash \Gamma_{e}|$ is finite for the stabilizer $\Gamma_{e}$ of any edge $e$ in $T$. It remains to show that $|\Gamma \colon G|$ is finite (see Proposition~\ref{Proposition Section 5} below).
\end{proof}

\begin{proposition}\label{Proposition Section 5}
Let $\Gamma \in \mathcal{C}$ be with trivial center, and let $D,G$ be subgroups of $\Gamma$ with $D$ cyclic and $G$ finitely generated. If $|G \backslash \Gamma \slash D| < \infty$, then $| \Gamma \colon G |< \infty$.
\end{proposition}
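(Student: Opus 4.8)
The plan is to peel off two elementary reductions and then to run an induction on the hierarchy defining $\mathcal{C}$, using the tree action that membership in $\mathcal{C}$ provides (this is the analogue, in our setting, of the corresponding step for limit groups over free groups).

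First, if $D$ is finite then each of the finitely many double cosets $Gg_iD$ is a union of at most $|D|$ left cosets of $G$, so $|\Gamma\colon G|<\infty$ and there is nothing to prove. Assume henceforth that $D=\langle d\rangle\cong\mathbb{Z}$ and write $\Gamma=\bigsqcup_{i=1}^{k}Gg_iD$. Counting the left $G$-cosets inside each double coset, together with the bijection $G\backslash Gg_iD\cong D/(D\cap g_i^{-1}Gg_i)$, gives the identity
\[ |\Gamma\colon G|=\sum_{i=1}^{k}[\,D:D\cap g_i^{-1}Gg_i\,].\]
Since the sum has finitely many terms and $D\cong\mathbb{Z}$, we have $|\Gamma\colon G|<\infty$ if and only if $D\cap g_i^{-1}Gg_i\neq 1$ for every $i$, equivalently $(g_iDg_i^{-1})\cap G\neq 1$ for every $i$. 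As each $g_iDg_i^{-1}$ is again infinite cyclic, satisfies $|G\backslash\Gamma/(g_iDg_i^{-1})|=|G\backslash\Gamma/D|<\infty$ (via $x\mapsto xg_i^{-1}$), and $G$ is still finitely generated, it is enough to prove the malnormality-type statement: \emph{if $D\le\Gamma$ is infinite cyclic and $|G\backslash\Gamma/D|<\infty$, then $D\cap G\neq 1$.}

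For this I would argue by contradiction, assuming $D=\langle d\rangle\cong\mathbb{Z}$, $|G\backslash\Gamma/D|<\infty$ and $D\cap G=1$, and induct on the least $n$ with $\Gamma\in\mathcal{C}_n$. Since $\Gamma$ has trivial centre the $\mathbb{Z}^m$-factor in its hierarchical description is trivial, so $\Gamma$ acts on a tree $T$ minimally, cocompactly and acylindrically, with trivial or infinite cyclic edge stabilisers and with a distinguished vertex group in $\mathcal{C}_{n-1}$ to which the inductive hypothesis will apply. The generator $d$ is hyperbolic or elliptic on $T$. If $d$ is hyperbolic with axis $\ell$, acylindricity forces $V=\mathrm{Stab}_\Gamma(\ell)$ to be virtually cyclic, so $[V:D]<\infty$, $G\cap V$ is finite, and $|G\backslash\Gamma/V|<\infty$; by minimality the convex hull of $\Gamma\ell$ is all of $T$, so, via $\Gamma=\bigcup_iGg_iV$, the tree $T$ is the convex hull of finitely many $G$-orbits of lines. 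Using that $G$ is finitely generated and meets $V$ finitely, I would then show that the edges $d^{m}e_0$ ($m\in\mathbb{Z}$) along $\ell$ fall into infinitely many $G$-orbits, and translate this — through $\Gamma=\bigsqcup Gg_iD$ — into infinitely many $(G,D)$-double cosets, contradicting $|G\backslash\Gamma/D|<\infty$. If $d$ is elliptic it lies in a vertex stabiliser $\Gamma_v$; when $v$ is in the orbit of the distinguished vertex $v_0$ the inclusion $D\le\Gamma_{v_0}$ gives $|G\backslash\Gamma/\Gamma_{v_0}|<\infty$, and intersecting with $\Gamma_{v_0}$ and invoking the inductive hypothesis for $\Gamma_{v_0}\in\mathcal{C}_{n-1}$ produces $D\cap G\neq 1$; when $v$ lies in an uncontrolled vertex orbit one instead passes to a splitting in which a power of $d$ is hyperbolic (or works with an edge stabiliser containing a power of $d$), returning to the hyperbolic analysis. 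The base case $n=0$ is $\Gamma=A\ast B$ on its Bass--Serre tree with trivial edge stabilisers, where $V$ is trivial and the argument degenerates.

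The hard part is the hyperbolic case: one must rule out that a finitely generated $G$ with $D\cap G=1$ meets all $\Gamma$-translates of the sparse family of edges $\{g_id^{m}e_0\}$ in only finitely many $G$-orbits. This is exactly where acylindricity (controlling $V=\mathrm{Stab}_\Gamma(\ell)$) and the triviality of the centre are essential — the statement fails for $\Gamma=\mathbb{Z}^2$, $G=\mathbb{Z}\times 0$, $D=0\times\mathbb{Z}$ — and propagating the single finite orbit-count across the whole tree while keeping track of $G$ being finitely generated is the delicate computation; a secondary difficulty is arranging that the elliptic case always reduces to the smaller class $\mathcal{C}_{n-1}$ rather than to an arbitrary finitely presented vertex group.
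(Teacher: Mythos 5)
Your first reduction is correct: writing $\Gamma=\bigsqcup_{i}Gg_{i}D$ gives $|\Gamma\colon G|=\sum_{i}[D:D\cap g_{i}^{-1}Gg_{i}]$, so one only needs to show that $D'\cap G\neq 1$ whenever $D'$ is infinite cyclic with $|G\backslash\Gamma/D'|<\infty$. But this repackaging does not actually make the inductive step any easier, and your execution of the induction has genuine gaps precisely where the work is.

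The paper's proof takes a different route, and the difference matters. It first shows that the quotient graph $X=G\backslash T$ is finite, using only the cocompactness of the $\Gamma$-action, the fact that $D$ (elliptic) stabilises a vertex $t$ so that $\Gamma t$ falls into finitely many $G$-orbits, finite generation of $G$ to bound the rank of $\pi_{1}(X)$, and minimality to rule out infinitely many valence-one vertices. Only then is the induction invoked: the finiteness of $X$ yields, for free, that $|(G\cap\Gamma_{w})\backslash\Gamma_{w}/\Gamma_{e}|<\infty$ for the distinguished vertex group $\Gamma_{w}\in\mathcal{C}_{l-1}$, after which an orbit-stabiliser count finishes. Crucially, the fixed vertex $t$ of $D$ has nothing to do with $w$; the double-coset hypothesis for the inductive call comes from the finiteness of $X$, not from the location of $D$.

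This is exactly where your plan breaks down in the elliptic case. When $v\notin\Gamma v_{0}$ you propose to ``pass to a splitting in which a power of $d$ is hyperbolic'' or to ``work with an edge stabiliser containing a power of $d$,'' but the hierarchy defining $\mathcal{C}$ hands you one particular acylindrical splitting, and there is no reason a different splitting with the required properties exists; $d$ may well lie in a rigid vertex group of every relevant decomposition. Even when $v\in\Gamma v_{0}$, you say ``intersecting with $\Gamma_{v_{0}}$ and invoking the inductive hypothesis produces $D\cap G\neq 1$,'' but the inductive hypothesis requires both that $G\cap\Gamma_{v_{0}}$ be finitely generated and that $|(G\cap\Gamma_{v_{0}})\backslash\Gamma_{v_{0}}/D|<\infty$; neither is immediate from $|G\backslash\Gamma/D|<\infty$ (a single $(G,D)$-double coset can meet $\Gamma_{v_{0}}$ in infinitely many $(G\cap\Gamma_{v_{0}},D)$-double cosets). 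Both facts are exactly what the paper extracts from the prior finiteness of $X$. Finally, your treatment of the hyperbolic case is admitted to be incomplete; here the paper also defers, but the essential gaps in your argument are the two points above.

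In short: the malnormality reformulation is valid but cosmetic, and the attempt to localise $d$ relative to the distinguished vertex group is the wrong move. You need to establish finiteness of $G\backslash T$ first, before any appeal to the vertex group in $\mathcal{C}_{n-1}$, and then the inductive hypothesis becomes available regardless of where $d$ sits.
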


\begin{proof}
Let us prove it by induction on the level $l=l(\Gamma)$ in the hierarchy of $\mathcal{C}= \cup_{n} \mathcal{C}_{n}$ where $\Gamma$ first appears.

If $l=0$, $\Gamma$ is a finitely presented free product, so it has a non-trivial, acylindrical, cocompact action on a tree $T$ with trivial edge stabilizers. If $l>0$, $\Gamma$ has a non-trivial, acylindrical, cocompact action on a tree $T$ with infinite cyclic edge stabilizers and the stabilizer of some vertex $w$ is in $\mathcal{C}_{n-1}$.

Let $c$ be a generator of the cyclic subgroup $D$. We distinguish two cases depending on whether $c$ is elliptic or hyperbolic.

Suppose that $c$ is elliptic and that it fixes a vertex $t$ of $T$. The proof that $X = G \backslash T$ is finite is the same proof as in \cite[Proposition 5.2]{something1}:

First, we show that $X$ has finite diameter. Since $| G \backslash \Gamma \slash D|$ is finite, there are $\gamma_{1},\dots,\gamma_{n}\in \Gamma$ such that \[ \Gamma= G \gamma_{1} D \dot{\cup} \cdots \dot{\cup} G \gamma_{n} D.\] Since $c \cdot t=t$, the $\Gamma$-orbit of $t$ consists of finitely many $G$-orbits, namely $G(\gamma_{i} \cdot t)$, $i\in \{1,\dots,n\}$. Since the action of $\Gamma$ on $T$ is cocompact, there is $m>0$ such that $T$ is the $m$-neighbourhood of $\Gamma t$. Therefore, $X$ has finite diameter.

Second, $\pi_{1}(X)$ has finite rank. Since $G$ is the graph of groups of $X$, there is $r \colon G \mapsto \pi_{1}(X)$ a retract. Since $G$ is finitely generated, $r(G)= \pi_{1}(X)$ is also finitely generated.

Third, $X$ has finitely many valency $1$ vertices. Suppose that there are infinitely many valency $1$ vertices. Then there is $\overline{u}= Gu$ a vertex with valency $1$ where $u$ is a vertex in $T$ and $\overline{e}= Ge$ is the unique edge in $X$ incident at $\overline{u}$ ($e$ is an edge in $T$ incident at $u$) such that $G_{\overline{u}}= G_{\overline{e}}$, where $G_{\overline{u}}$ and $G_{\overline{e}}$ denote the stabilizers of $u$ and $e$ in $G$, respectively. Otherwise, $G$ would not be finitely generated. Since $G$ acts by isometries in $T$ and $\overline{u}=Gu$ has valency $1$ in $X$, $G_{\overline{u}}$ acts transitively on the link of $u$, $\text{lk}(u)$. Hence, $ |\text{lk}(u)|=|G_{\overline{u}} \colon G_{\overline{e}}|=1$, so $u$ is a valency $1$ vertex of $T$. This is a contradiction since $T$ is minimal as a $\Gamma$-tree.

In conclusion, $X$ has finite diameter, finite rank and finitely many valency $1$ vertices, so $X$ is a finite graph.

Suppose that $l=0$. Then, $|G \backslash \Gamma|= | \Gamma \colon G|$ is finite because the number of edges in $X$ is an upper bound for that number. If $l>0$ there is a vertex $w$ such that its stabilizer, $\Gamma_{w}$, is in $\mathcal{C}_{l-1}$. Recall that by the definition of the class $\mathcal{C}$, $\Gamma_{w}$ does not have center. The number $| (G \cap \Gamma_{w}) \backslash \Gamma_{w} \slash \Gamma_{e}|$ is finite because it is bounded above by the number of edges in $X$ incident at $Gw$ that are images of edges $\gamma e \in \Gamma e$. So by inductive hypothesis $G \cap \Gamma_{w}$ has finite index in $\Gamma_{w}$. Define the action $\alpha \colon  \Gamma_{w} \times G \backslash \Gamma \mapsto G \backslash \Gamma$ by \[ \alpha((x, G \gamma))= G( \gamma x).\] Then, \[ G \backslash \Gamma= \dot{\bigcup}_{\gamma \in G \backslash \Gamma \slash \Gamma_{w}} \text{Orbit}(G \gamma).\] Note that $|G \backslash \Gamma \slash \Gamma_{w}|< \infty$ because it is bounded above by the number of edges in $X$. Moreover, by the orbit-stabilizer theorem, $\text{Orbit}(G \gamma)$ is in bijection with $\Gamma_{w} \slash ( \gamma^{-1} G \gamma \cap \Gamma_{w})$ which is finite. In conclusion, $G \backslash \Gamma$ is finite.

The argument for the case when $c$ acts hyperbolically on $T$ is the same as in \cite[Proposition 5.2]{something1}.
\end{proof}

\section{Proof of Theorem~\ref{Theorem Section 6} and nilpotent quotients}
\label{Section 6}

From now on, we shall restrict to the conditions of Theorem~\ref{Theorem A} with the additional assumptions of Proposition~\ref{Proposition 2}. That is, each projection map $p_{i} \colon S \mapsto \Gamma_{i}$ is surjective, each $\Gamma_{i}$ has trivial center and each $\Gamma_{i}$ splits as a HNN extension over an infinite cyclic subgroup (or the trivial group) with stable letter $t_{i}\in L_{i}$.

The goal of this section is to prove that $\Gamma_{i} \slash L_{i}$ is virtually nilpotent. For the convenience of the reader, we recall the proof of Bridson, Howie, Miller and Short in \cite{something1}, pointing out which properties of limit groups over free groups are used and checking that these are all satisfied by limit groups over Droms RAAGs.

We introduce the following notation. We write $K_{i}$ for the kernel of the $i$-th projection map $p_{i}$ and $N_{i,j}$ for the image of $K_{i}$ under the $j$-th projection map $p_{j}$. That is,
\[ K_{i}= \{ (s_{1},\dots,s_{i-1},1,s_{i+1},\dots,s_{n}) \in S \},\]
\[ N_{i,j}= \{s_{j}\in \Gamma_{j} \mid (\ast,\dots,\ast,1,\ast,\dots,\ast,s_{j},\ast,\dots,\ast)\in S\}.\]
Note that $N_{i,j}$ is a normal subgroup in $\Gamma_{j}$ because $p_{j}$ is surjective.

\begin{lemma}{\cite[Lemma 6.1]{something1}}
The iterated commutator $[N_{1,j},N_{2,j},\dots, N_{j-1,j},N_{j+1,j},\dots, N_{n,j}]$  is contained in $L_{j}$.
\end{lemma}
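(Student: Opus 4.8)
The plan is to prove the statement inside $S$ and then push it forward to $\Gamma_j$ by $p_j$. Consider the iterated (left-normed) commutator subgroup
\[ M \;=\; [\,K_1,\dots,K_{j-1},K_{j+1},\dots,K_n\,]\;\leq\;S, \]
of the kernels $K_i$, with the $j$-th factor omitted. I claim that $M\subseteq L_j$ and that $p_j(M)$ is precisely the commutator subgroup in the statement; the lemma follows at once from these two facts.

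For the first claim, recall that each $K_k$ (with $k\neq j$) is the kernel of the homomorphism $p_k\colon S\to\Gamma_k$, hence normal in $S$, and that it occurs among the factors defining $M$. A short induction on the number of factors --- using only the elementary facts that $[A,B]\subseteq A$ when $A\trianglelefteq S$ and $[A,B]\subseteq B$ when $B\trianglelefteq S$ --- shows that $M\subseteq K_k$ for every $k\neq j$, hence $M\subseteq\bigcap_{k\neq j}K_k$. But an element of $S$ lies in this intersection exactly when all of its coordinates except possibly the $j$-th are trivial, i.e. when it lies in $S\cap\Gamma_j=L_j$; so $M\subseteq L_j$.

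For the second claim, a group homomorphism carries iterated commutator subgroups to iterated commutator subgroups, so
\[ p_j(M)=[\,p_j(K_1),\dots,p_j(K_{j-1}),p_j(K_{j+1}),\dots,p_j(K_n)\,]=[\,N_{1,j},\dots,N_{j-1,j},N_{j+1,j},\dots,N_{n,j}\,], \]
using the definition $N_{i,j}=p_j(K_i)$. Combining the two claims, the commutator in the statement equals $p_j(M)\subseteq p_j(L_j)=L_j$, since $p_j$ restricts to an isomorphism from $S\cap\Gamma_j$ onto $L_j\leq\Gamma_j$.

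There is no real obstacle here: the argument is bookkeeping with the coordinate subgroups, resting on two standard facts of commutator calculus (an iterated commutator of subgroups lies inside every factor that is normal in the ambient group, and homomorphisms commute with forming iterated commutator subgroups). The only point worth a sentence of care is the identification of $\bigcap_{k\neq j}K_k$ with $L_j$, which is immediate from the description of $K_k$ as the set of elements of $S$ with trivial $k$-th coordinate. Alternatively, one can argue with elements: given $x_i\in N_{i,j}$, pick $\tilde x_i\in K_i$ with $p_j(\tilde x_i)=x_i$, form $w=[\tilde x_1,\dots,\widehat{\tilde x_j},\dots,\tilde x_n]\in S$, and note that for $k\neq j$ the factor $\tilde x_k$ contributes a trivial $k$-th coordinate, so (a left-normed commutator with a trivial entry being trivial) the $k$-th coordinate of $w$ vanishes, while its $j$-th coordinate is $[x_1,\dots,\widehat{x_j},\dots,x_n]$; hence $w\in S\cap\Gamma_j=L_j$, and these elements generate the commutator subgroup in question.
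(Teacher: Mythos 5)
Your proof is correct, and since the paper simply cites \cite[Lemma 6.1]{something1} without reproducing an argument, there is nothing in the source to compare against in detail; your approach (lift the iterated commutator to the kernels $K_i$ inside $S$, observe that an iterated commutator of normal subgroups lies in each of them, hence in $\bigcap_{k\neq j}K_k=L_j$, then push forward by $p_j$) is exactly the standard argument used in the cited reference. The element-wise alternative you offer at the end makes the same point and is equally valid.
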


Let $\Gamma_{i}= B_{i} \ast_{C_{i}}$ be the HNN splitting of $\Gamma_{i}$ with stable letter $t_{i}\in L_{i}$ and $C_{i}$ an infinite cyclic subgroup (or the trivial group). Let us consider, for $i\neq j$, the group $A_{i,j}= p_{j}({p_{i}}^{-1}(C_{i}))$. Since $N_{i,j}= p_{j}({p_{i}}^{-1}(\{1\}))$, $N_{i,j}<A_{i,j}<\Gamma_{j}$. The strategy is to show that $N_{i,j}$ has finite index in $\Gamma_{j}$. This is achieved in two steps. We sketch the proof below:\\[5pt]
(a) If $H_{2}(S;\mathbb{Q})$ is finite dimensional, $A_{i,j}$ has finite index in $\Gamma_{j}$ and $A_{i,j} \slash N_{i,j}$ is cyclic (see \cite[Lemma 6.2]{something1}).

This step is proved as follows. By using the HNN splitting of $\Gamma_{j}$, we obtain a decomposition of $S$ as a HNN extension. An argument using the Mayer-Vietoris sequence for HNN extensions implies that $H_{1}(A_{i,j}; \mathbb{Q})$ is finite dimensional. In the case of limit groups over free groups, it follows from \cite[Theorem 2]{normalizer} that $A_{i,j}$ is finitely generated. In the case of limit groups over Droms RAAGs, it follows from Property~\ref{Property (5)}. The subgroup $A_{i,j}$ contains the non-trivial normal subgroup $L_{j}$, so Theorem~\ref{Theorem Section 5} implies that $A_{i,j}$ has finite index in $\Gamma_{j}$.\\[5pt]
(b) If $H_{2}(S_{1}; \mathbb{Q})$ is finite dimensional for each subgroup $S_{1}$ of finite index in $S$, $N_{i,j}$ has actually finite index in $\Gamma_{j}$ (\cite[Proposition 6.3]{something1}).

This step is shown with homological arguments. The only property used of limit groups over free groups is the splitting of $\Gamma_{i}$ as a HNN extension. Assuming that $\Gamma_{j} \slash N_{i,j}$ is virtually cyclic, it is shown that there is $S_{1}<S$ a finite index subgroup such that $H_{2}(S_{1};\mathbb{Q})$ is infinite dimensional, which is a contradiction.

Theorem~\ref{Theorem Section 6} now follows easily.

\begin{theorem}\label{Theorem Section 6}
Let $\Gamma_{1},\dots, \Gamma_{n}$ be limit groups over Droms RAAGs with the properties of Proposition~\ref{Proposition 2}. Suppose that $H_{2}(S_{1}; \mathbb{Q})$ is finite dimensional for all $S_{1}<S$ finite dimensional. Then, $\Gamma_{j} \slash L_{j}$ is virtually nilpotent.
\end{theorem}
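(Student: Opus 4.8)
The plan is to combine steps (a) and (b) sketched above with the iterated commutator Lemma~\cite[Lemma 6.1]{something1}. First I would fix $j$ and recall that, by hypothesis, $H_{2}(S_{1};\mathbb{Q})$ is finite dimensional for every finite index $S_{1}<S$; in particular step (b) applies and yields that each $N_{i,j}$ (for $i\neq j$) has finite index in $\Gamma_{j}$. Since there are only finitely many indices $i\in\{1,\dots,n\}\setminus\{j\}$, the intersection $\bigcap_{i\neq j} N_{i,j}$ also has finite index in $\Gamma_{j}$; call it $M_{j}$. Passing to $M_{j}$ is the device that lets us promote a commutator statement into a nilpotency statement.

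Next I would use the iterated commutator lemma: $[N_{1,j},\dots,N_{j-1,j},N_{j+1,j},\dots,N_{n,j}]\subseteq L_{j}$. Since $M_{j}\subseteq N_{i,j}$ for every $i\neq j$, taking the $(n-1)$-fold iterated commutator of $M_{j}$ with itself gives $[M_{j},\overset{(n-1)}{\dots},M_{j}]\subseteq L_{j}$, i.e.\ the $(n-1)$-st term $\gamma_{n-1}(M_{j})$ of the lower central series of $M_{j}$ lies in $L_{j}$. Therefore the image of $M_{j}$ in $\Gamma_{j}/L_{j}$ is nilpotent of class at most $n-2$ (and in particular $\Gamma_{j}/L_{j}$ contains this nilpotent subgroup with finite index, since $M_{j}L_{j}/L_{j}$ has finite index in $\Gamma_{j}/L_{j}$ because $M_{j}$ has finite index in $\Gamma_{j}$). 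Hence $\Gamma_{j}/L_{j}$ is virtually nilpotent, as claimed.

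The only genuine content beyond bookkeeping lies in steps (a) and (b), which are cited from \cite{something1} with the replacements already flagged in the excerpt: in step (a) one uses Property~\ref{Property (5)} in place of \cite[Theorem 2]{normalizer} to deduce that $A_{i,j}$ is finitely generated once $H_{1}(A_{i,j};\mathbb{Q})$ is finite dimensional, and then Theorem~\ref{Theorem Section 5} (applicable since $\Gamma_{j}$ lies in the class $\mathcal{C}$ by Property~\ref{Property (3)} and has trivial center, and contains the non-trivial normal subgroup $L_{j}<A_{i,j}$) forces $A_{i,j}$ to have finite index in $\Gamma_{j}$; in step (b) the homological argument uses only the HNN splitting of $\Gamma_{i}$ from Proposition~\ref{Proposition 2}(5), which is available here. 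The main obstacle — really the only place where something could go wrong — is verifying that the Mayer--Vietoris computations of \cite[Lemma 6.2, Proposition 6.3]{something1} do not secretly use any property of limit groups over free groups beyond the HNN splitting and the finite generation/finite index conclusions we have already secured via Property~\ref{Property (5)} and Theorem~\ref{Theorem Section 5}; once that is checked the deduction of Theorem~\ref{Theorem Section 6} is the short commutator argument above.
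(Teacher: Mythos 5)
Your plan is correct and matches the paper's proof essentially step for step: the paper likewise uses steps (a) and (b) to get that each $N_{i,j}$ has finite index in $\Gamma_{j}$, intersects them to obtain a finite-index normal subgroup $N$ of $\Gamma_{j}$ with $[N,\dots,N]\subseteq L_{j}$, and concludes that $NL_{j}/L_{j}$ is a nilpotent finite-index subgroup of $\Gamma_{j}/L_{j}$, which is precisely your commutator/lower-central-series argument with $M_{j}=\bigcap_{i\neq j}N_{i,j}$.
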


\begin{proof}
Let $G$ be a group, $L$ a normal subgroup in $G$ and $N_{1},\dots, N_{n}$ normal subgroups of finite index in $G$ such that $[N_{1},\dots, N_{n}]\subseteq L$. Then, if we denote $N_{1}\cap \dots \cap N_{n}$ by $N$, $N$ has finite index in $G$ and $[N,\dots,N]$ is a subgroup of $L$. Now $N$ is normal in $G$, and accordingly $[NL,\dots,NL]$ is also a subgroup in $L$. Therefore, $NL\slash L$ is nilpotent and it has finite index in $G \slash L$ because $N$ has finite index in $G$.
\end{proof}

We finally state two results about finitely generated virtually nilpotent groups that will be used in the proof of Theorem~\ref{Theorem A}.

\begin{lemma}{\cite[Lemma 8.1]{something1}}\label{Nilpotent 1}
Let $G$ be a finitely generated virtually nilpotent group and let $H$ be a subgroup of infinite index. Then there exists a subgroup $K$ of finite index in $G$ and an epimorphism $f \colon K \mapsto \mathbb{Z}$ such that $( H \cap K) \subseteq \text{ker }(f)$.
\end{lemma}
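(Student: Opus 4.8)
This is a statement about finitely generated virtually nilpotent groups alone, with no input from limit groups, so the plan is to argue purely group-theoretically (it is \cite[Lemma~8.1]{something1}).

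\emph{Step 1: reduction to a finitely generated nilpotent group.} Since $G$ is finitely generated and virtually nilpotent it has a finite-index normal subgroup $N\trianglelefteq G$ that is finitely generated and nilpotent (take a finite-index nilpotent subgroup and intersect its finitely many conjugates). Put $H'=H\cap N$. The tower law gives $[G:H]=[G:HN]\cdot[HN:H]$ with $[G:HN]\le[G:N]<\infty$, so $[HN:H]=\infty$; and $[HN:H]=[N:H']$, so $H'$ has infinite index in $N$. It now suffices to produce a finite-index subgroup $K\le N$ (automatically of finite index in $G$) and an epimorphism $f\colon K\to\mathbb{Z}$ with $H'\cap K\subseteq\ker f$, since then $H\cap K=H'\cap K\subseteq\ker f$. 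So I may assume $G=N$ is finitely generated nilpotent, $H'$ has infinite index, and I expect $K=N$ will work.

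\emph{Step 2: detect the infinite index in the free abelianisation.} The essential ingredient is the standard fact that a subgroup of a finitely generated nilpotent group has finite index if and only if its image in the abelianisation does; I would either cite it or prove it by induction on the nilpotency class, exploiting that the last nontrivial term $Z=\gamma_c(N)$ of the lower central series is central (first showing $H'\cap Z$ has finite index in $Z$, then passing to $N/Z$). Granting this, the image of $H'$ in $N^{\mathrm{ab}}=N/[N,N]$ has infinite index; since the torsion subgroup of the finitely generated abelian group $N^{\mathrm{ab}}$ is finite, the image $\overline{H'}$ of $H'$ in the free abelianisation $A:=N^{\mathrm{ab}}/\mathrm{tors}\cong\mathbb{Z}^{d}$ still has infinite index, i.e.\ $\operatorname{rank}\overline{H'}<d$ (in particular $d\ge1$, as otherwise $N$ would be finite).

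\emph{Step 3: build $f$.} Let $B\le A$ be the isolator of $\overline{H'}$, the set of $a\in A$ with $a^{m}\in\overline{H'}$ for some $m\ge1$. It is a pure subgroup of the free abelian group $A$, hence a direct summand, of the same rank as $\overline{H'}$, so $A/B\cong\mathbb{Z}^{s}$ with $s\ge1$. Composing the canonical map $N\twoheadrightarrow A$ with $A\twoheadrightarrow A/B\cong\mathbb{Z}^{s}$ and a coordinate projection $\mathbb{Z}^{s}\to\mathbb{Z}$ gives an epimorphism $f\colon N\to\mathbb{Z}$, and $\overline{H'}\subseteq B\subseteq\ker(A\to A/B)$, so $H'\subseteq\ker f$. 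Taking $K=N$ finishes the argument. The only non-formal point is the nilpotent-group fact in Step 2; the rest is bookkeeping with indices and with finitely generated abelian groups, so that is where whatever (mild) difficulty there is will sit.
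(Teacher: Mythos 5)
This lemma is quoted verbatim from \cite{something1} and the present paper gives no proof of its own, so there is no in-paper argument to compare against. Your proof is correct and follows the standard route: intersect conjugates to get a normal finitely generated nilpotent $N\trianglelefteq G$ of finite index (then $H'=H\cap N$ has infinite index in $N$ by the index formula), invoke the fact that a subgroup of a finitely generated nilpotent group has finite index if and only if its image in the abelianisation does, and split off the pure closure (isolator) of $\overline{H'}$ in the free part of $N^{\mathrm{ab}}$ to obtain the epimorphism to $\mathbb{Z}$ with kernel containing $H'$. The only non-formal ingredient is the nilpotent-group fact you flag in Step~2; your sketch (induction on the class $c$, using that $\gamma_c(N)$ is central) is the right way to establish it, the key computation being that if generators $g_i$ of $N$ satisfy $g_i^{k}\in H'[N,N]$ then multilinearity of the $c$-fold commutator modulo $\gamma_{c+1}(N)=1$ forces $[g_1,\dots,g_c]^{k^c}\in H'\cap\gamma_c(N)$.
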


Repeated applications of the previous lemma yield the following result.

\begin{corollary}{\cite[Corollary 8.2]{something1}}\label{Nilpotent 2}
Let $G$ be a finitely generated, virtually nilpotent group and let $H$ be a subgroup of $G$. Then there is a subnormal chain \[ H_{0} < H_{1} < \cdots < H_{r}=G,\] where $H_{0}$ is a subgroup of finite index in $H$ and for each $i$ the quotient group $H_{i+1} \slash H_{i}$ is either finite or cyclic.
\end{corollary}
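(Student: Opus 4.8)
The plan is to argue by induction on the Hirsch length $h=h(G)$. Recall that a finitely generated virtually nilpotent group is virtually polycyclic, so it carries a well-defined finite Hirsch length; this invariant is additive in short exact sequences, vanishes exactly on finite groups, and is unchanged on passing to a finite-index subgroup. Moreover every subgroup of such a group is again finitely generated and virtually nilpotent, so the induction hypothesis will be available for the subgroups that arise. Throughout, I read ``subnormal chain'' as $H_{i}\triangleleft H_{i+1}$ for each $i$; after the construction one deletes repeated terms to make all inclusions strict.

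If $H$ has finite index in $G$ — in particular whenever $h=0$, i.e.\ $G$ is finite — there is nothing to iterate: the normal core $H_{0}=\bigcap_{g\in G}gHg^{-1}$ is a normal, finite-index subgroup of $G$ contained in $H$, hence of finite index in $H$, and $H_{0}\triangleleft G$ is the required chain, with the single quotient $G/H_{0}$ finite. So assume $H$ has infinite index. Then Lemma~\ref{Nilpotent 1} supplies a finite-index subgroup $K<G$ and an epimorphism $f\colon K\to \mathbb{Z}$ with $H\cap K\subseteq \ker f$.

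The one genuinely delicate point is that $K$ need not be normal in $G$, so one cannot simply append ``$K\triangleleft G$'' on top of a chain built inside $\ker f$. I would get around this by working with $G_{1}:=\bigcap_{g\in G}gKg^{-1}$, a normal finite-index subgroup of $G$ contained in $K$, and restricting $f$ to $G_{1}$. This restriction cannot be trivial: if $G_{1}\subseteq\ker f$ then $f$ would factor through the finite group $K/G_{1}$ and hence be zero. Therefore $f(G_{1})=d\mathbb{Z}$ for some $d\geq 1$, and $N_{1}:=G_{1}\cap\ker f=\ker(f|_{G_{1}})$ is normal in $G_{1}$ with $G_{1}/N_{1}\cong d\mathbb{Z}$ infinite cyclic. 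Since $G_{1}\subseteq K$ we get $H\cap G_{1}\subseteq H\cap K\subseteq\ker f$, so $H\cap G_{1}\subseteq N_{1}$, while $[H:H\cap G_{1}]\leq[G:G_{1}]<\infty$.

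Now $N_{1}$ is finitely generated virtually nilpotent with $h(N_{1})=h(G_{1})-1=h(G)-1<h(G)$, so the induction hypothesis applied to the pair $(N_{1},\,H\cap G_{1})$ yields a subnormal chain $H_{0}\triangleleft H_{1}\triangleleft\cdots\triangleleft H_{t}=N_{1}$ with $H_{0}$ of finite index in $H\cap G_{1}$ — hence of finite index in $H$ — and every $H_{i+1}/H_{i}$ finite or cyclic. Appending $N_{1}=H_{t}\triangleleft G_{1}\triangleleft G$, which is legitimate because $N_{1}$ is normal in $G_{1}$ and $G_{1}$ is normal in $G$ and which contributes the cyclic quotient $G_{1}/N_{1}\cong\mathbb{Z}$ followed by the finite quotient $G/G_{1}$, produces the desired chain from a finite-index subgroup of $H$ up to $G$. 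Everything besides the passage from $K$ to its core $G_{1}$ (together with the verification that $f|_{G_{1}}\neq 0$) is routine bookkeeping with Hirsch lengths and indices, so I expect that step to be the only one needing real care.
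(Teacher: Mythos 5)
Your proof is correct and is the natural fleshing-out of the paper's terse ``repeated applications of the previous lemma,'' proceeding by induction on Hirsch length exactly as one would expect. You have, in fact, correctly spotted and handled the one point the paper glosses over: the finite-index subgroup $K$ supplied by Lemma~\ref{Nilpotent 1} need not be normal in $G$, so passing to its normal core before taking $\ker(f|_{G_{1}})$ is exactly the right fix, and your verification that $f$ remains nontrivial on the core (since it cannot factor through the finite quotient $K/G_{1}$) closes the gap cleanly.
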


\section{Proof of Theorem~\ref{Theorem Section 7}}
\label{Section 7}

The aim of this section is to prove Theorem~\ref{Theorem Section 7}:

\begin{theorem}\label{Theorem Section 7}
Let $\Gamma_{1},\dots, \Gamma_{n}$ be limit groups over Droms RAAGs with trivial center, and $N$ the kernel of an epimorphism $\Gamma_{1}\times \cdots \Gamma_{n} \mapsto \mathbb{Z}$. Then, there is a subgroup of finite index $N_{0}\subseteq N$ such that at least one of the homology groups $H_{k}(N_{0}; \mathbb{Q})$ has infinite dimension.
\end{theorem}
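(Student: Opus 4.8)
The plan is to induct on $n$, reducing at each stage to a homological statement about a single limit group over a Droms RAAG or a direct product with one fewer factor. The key structural input is Proposition~\ref{Proposition 1} (or Property~\ref{Property (3)}): each $\Gamma_i$ either is a Droms RAAG or splits as a finite graph of groups over trivial/cyclic edge groups with a proper vertex group of smaller height. Since $\Gamma_i$ is of type $F_\infty$ (Property~\ref{Property (6)}), finite-index subgroups are of type $F_\infty$ as well, and in particular every $H_k$ we consider is finite-dimensional for the ambient groups, so the passage to $N$ is where infinite dimensionality must be forced. The epimorphism $\phi\colon \Gamma_1\times\cdots\times\Gamma_n \to \mathbb{Z}$ restricts to a homomorphism $\phi_i\colon \Gamma_i\to\mathbb{Z}$ on each factor, and $N = \ker\phi$ fits into the picture: after passing to a finite-index subgroup, we may assume each $\phi_i$ is either zero or surjective.

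First I would dispose of the degenerate cases. If some $\phi_i$ is the zero map, then $\Gamma_i$ is a direct factor of $N$, namely $N \cong \Gamma_i \times N'$ where $N'$ is the kernel of the induced map on the complementary product; then $H_*(N;\mathbb{Q})$ contains $H_*(\Gamma_i;\mathbb{Q})\otimes H_*(N';\mathbb{Q})$ by Künneth, and one reduces to the $(n-1)$-factor case (or, if all $\phi_i$ vanish, $\phi$ is not surjective, contradiction). So assume every $\phi_i$ is surjective, whence $K_i := \ker\phi_i$ has infinite index in $\Gamma_i$ and $\Gamma_i/K_i \cong \mathbb{Z}$. The heart of the argument is then: either some $K_i$ is not finitely generated, in which case $H_1(K_i;\mathbb{Q})$ is infinite-dimensional — here Property~\ref{Property (5)} is the converse statement that we cannot invoke directly since $\Gamma_i$ need not be a Droms RAAG, so instead I would argue that an infinitely generated $K_i$ sitting as a kernel of a map to $\mathbb{Z}$ has infinite-dimensional first homology by a Mayer–Vietoris or Bieri–Strebel-type argument applied to the graph-of-groups splitting of $\Gamma_i$ — or else every $K_i$ is finitely generated, hence (by Property~\ref{Property (2)}) itself a limit group over a Droms RAAG, and one uses the fact that $\Gamma_i$ fibers over $\mathbb{Z}$ with finitely generated but non-finitely-presented (or non-$FP_\infty$) kernel to locate the infinite-dimensional homology.

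The cleanest route, and the one I would actually pursue, follows the original strategy of Bieri: $N$ is the kernel of a map to $\mathbb{Z}$, so $\Gamma_1\times\cdots\times\Gamma_n$ is an HNN extension (ascending or not) with base containing $N$-related data, and one analyzes $H_*(N;\mathbb{Q})$ via the Wang exact sequence $\cdots \to H_k(N;\mathbb{Q}) \xrightarrow{t_*-1} H_k(N;\mathbb{Q}) \to H_k(\Gamma_1\times\cdots\times\Gamma_n;\mathbb{Q}) \to H_{k-1}(N;\mathbb{Q}) \to \cdots$. Since the ambient group is of type $F_\infty$, all its rational homology is finite-dimensional; if every $H_k(N;\mathbb{Q})$ were finite-dimensional, the Wang sequence would force $t_*-1$ to be an isomorphism in all degrees beyond some point, and one derives a contradiction with the structure of $\Gamma_1\times\cdots\times\Gamma_n$ — concretely, by producing a free abelian or free subgroup on which $\phi$ acts in a way incompatible with $N$ having finite-dimensional homology, using that each $\Gamma_i$ contains a non-abelian free subgroup or a $\mathbb{Z}^2$ on which $\phi_i$ is nonzero. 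The main obstacle, and where I expect most of the work to lie, is the base case $n=1$: showing that for a single limit group $\Gamma$ over a Droms RAAG with trivial center and a surjection $\phi\colon\Gamma\to\mathbb{Z}$, the kernel has some infinite-dimensional rational homology group. For this I would use Proposition~\ref{Proposition 1} to split $\Gamma$ and argue by induction on height: a Droms RAAG maps onto $\mathbb{Z}$ with kernel having infinite-dimensional $H_1$ unless it is cyclic (which is excluded by triviality of center together with the surjection), and for higher height the graph-of-groups splitting combined with the Mayer–Vietoris sequence propagates infinite dimensionality from a vertex or edge group up to $\Gamma$.
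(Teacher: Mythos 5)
Your proposal identifies the right reduction to the case where every $\phi_i := \phi|_{\Gamma_i}$ is surjective, and you are right that Property~\ref{Property (5)} is the right kind of tool, but there are two substantial gaps, and you have misjudged where the weight of the argument falls.

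First, the base case $n=1$ is the \emph{easy} part, not the hard part, and your proposed induction-on-height/Mayer--Vietoris route is both unnecessary and unfinished. The paper's base case is one sentence: $N=\ker\phi$ is a non-trivial normal subgroup of infinite index in a limit group over a Droms RAAG with trivial center, so by Theorem~\ref{Theorem Section 5} it cannot be finitely generated, and then Property~\ref{Property (5)} (contrapositive) forces $H_1(N;\mathbb{Q})$ to be infinite-dimensional. No splitting of $\Gamma$, no induction on height, no case analysis is needed. Relatedly, your dichotomy ``either some $K_i$ is not finitely generated, or else every $K_i$ is finitely generated'' is vacuous: each $K_i$ is a non-trivial normal subgroup of infinite index in $\Gamma_i\in\mathcal{C}$ with trivial center, so Theorem~\ref{Theorem Section 5} rules out the second branch unconditionally.

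Second, and more seriously, your inductive step is not an argument. You invoke the Wang sequence for $1\to N\to \Gamma_1\times\cdots\times\Gamma_n\to\mathbb{Z}\to 1$, but finite-dimensionality of every $H_k(N;\mathbb{Q})$ is perfectly compatible with finite-dimensional $H_k$ of the ambient group --- the Wang sequence alone gives no contradiction without controlling the $t_*$-module structure, and you never produce the promised ``free abelian or free subgroup on which $\phi$ acts incompatibly.'' What the paper actually does is prove an intermediate statement (Theorem~\ref{Theorem Section 7.2}): when every $\phi_i$ is surjective, $H_n(\ker\phi;\mathbb{Q})$ itself is infinite-dimensional. The inductive step runs the LHS spectral sequence for $1\to S_{n-1}=\ker(p_n|_N)\to N\xrightarrow{p_n}\Gamma_n\to 1$. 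The crucial input is Proposition~\ref{Prev Proposition} (from~\cite{something1}), which guarantees $H_j(N;\mathbb{Q})$ is finite-dimensional for $j<n$, so one only needs $E^2_{1,n-1}=H_1(\Gamma_n;H_{n-1}(S_{n-1};\mathbb{Q}))$ to be infinite-dimensional. This is where the work is: one analyzes $H_{n-1}(S_{n-1};\mathbb{Q})$ as a $\mathbb{Q}[\Gamma_n]$-module using the structure theory over the PID $\mathbb{Q}[t,t^{-1}]$ and Shapiro's lemma, and extracts a direct summand isomorphic to $H_1(L_n;\mathbb{Q})$, which is infinite-dimensional by the same Theorem~\ref{Theorem Section 5} plus Property~\ref{Property (5)} argument as in the base case. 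None of this module-theoretic bookkeeping appears in your sketch, and it cannot be replaced by the Wang sequence alone. Finally, the passage from Theorem~\ref{Theorem Section 7.2} to Theorem~\ref{Theorem Section 7} is exactly your degenerate-case reduction together with replacing each $\Gamma_i$ by the finite-index preimage $\phi_i^{-1}(m\mathbb{Z})$ for a common $m$ --- so that part of your plan is on target.
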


Recall that by Property~\ref{Property (6)} limit groups over Droms RAAGs are of type $F_{\infty}$. Thus the following result can be used in our case:

\begin{proposition}{\cite[Proposition 7.1]{something1}}\label{Prev Proposition}
If $\Gamma_{1},\dots,\Gamma_{n}$ are groups of type $FP_{n}(\mathbb{Z})$ and $\phi \colon \Gamma_{1}\times \cdots \times \Gamma_{n} \mapsto \mathbb{Z}$ has non-trivial restriction to each factor, then $H_{j}(\text{ker }\phi; \mathbb{Z})$ is finitely generated for $j\leq n-1$.
\end{proposition}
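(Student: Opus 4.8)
We sketch a proof. The plan is to compute $H_\ast(\ker\phi;\mathbb{Z})$ from a resolution built out of the factors and to reduce the statement to a finiteness lemma over the Laurent ring $R=\mathbb{Z}[t^{\pm1}]\cong\mathbb{Z}[\mathbb{Z}]$. Write $G=\Gamma_1\times\cdots\times\Gamma_n$, $K=\ker\phi$; after replacing $\mathbb{Z}$ by $\phi(G)$ we may assume $\phi$ is onto, and we set $\phi_i=\phi|_{\Gamma_i}\colon\Gamma_i\to\mathbb{Z}$, so $\phi_i(\Gamma_i)=d_i\mathbb{Z}$ with $d_i\geq 1$ by hypothesis. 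First I would choose, for each $i$, a projective $\mathbb{Z}\Gamma_i$-resolution $P^{(i)}_\ast$ of $\mathbb{Z}$ that is finitely generated in degrees $\leq n$ (possible since $\Gamma_i$ is of type $FP_n(\mathbb{Z})$) and form $P_\ast=P^{(1)}_\ast\otimes_{\mathbb{Z}}\cdots\otimes_{\mathbb{Z}}P^{(n)}_\ast$; a K\"unneth argument over $\mathbb{Z}$ shows $P_\ast$ is a projective $\mathbb{Z}G$-resolution of $\mathbb{Z}$, finitely generated over $\mathbb{Z}G$ in degrees $\leq n$.

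Next I would use the identification $H_\ast(K;\mathbb{Z})\cong H_\ast\!\big(\mathbb{Z}[G/K]\otimes_{\mathbb{Z}G}P_\ast\big)$ and the standard manipulation of tensor products over a tensor-product ring to rewrite this as $H_\ast\!\big(D^{(1)}_\ast\otimes_{R}\cdots\otimes_{R}D^{(n)}_\ast\big)$, where $D^{(i)}_\ast=P^{(i)}_\ast\otimes_{\mathbb{Z}\Gamma_i}R$ and $R$ is a $\mathbb{Z}\Gamma_i$-module through $\phi_i$. Each $D^{(i)}_\ast$ is a complex of finitely generated projective $R$-modules in degrees $\leq n$; since $\phi_i$ factors through $\Gamma_i/\ker\phi_i\cong d_i\mathbb{Z}$ and $R$ is free of finite rank over $\mathbb{Z}[d_i\mathbb{Z}]$, one obtains $H_q(D^{(i)}_\ast)\cong H_q(\ker\phi_i;\mathbb{Z})\otimes_{\mathbb{Z}[d_i\mathbb{Z}]}R$, which is finitely generated over the Noetherian ring $R$ for $q\leq n-1$ by the usual fact that the homology of the kernel of a surjection onto $\mathbb{Z}$ from an $FP_n$ group is $\mathbb{Z}[\mathbb{Z}]$-finite up to degree $n-1$. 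The crucial extra point, and the only place the hypothesis $\phi_i\neq 0$ enters, is that $H_0(D^{(i)}_\ast)\cong R/(t^{d_i}-1)$ is finitely generated as an abelian group.

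Then I would isolate the lemma that drives the end of the argument: if $M$ is a finitely generated $R$-module and $N$ an $R$-module that is finitely generated as an abelian group, then $\mathrm{Tor}^R_\ast(M,N)$ is finitely generated as an abelian group in every degree. Here $t$ acts invertibly on the finitely generated abelian group $N$, so by Cayley--Hamilton on a filtration of $N$ it is annihilated by some $g(t)$ that is a product of monic polynomials with unit constant term; hence $N$ is a module over $B=R/(g)$, a ring finitely generated over $\mathbb{Z}$, and as $g$ is a non-zero-divisor in the domain $R$ the groups $\mathrm{Tor}^R_\ast(M,B)$ live only in degrees $0,1$ and are finitely generated over $B$, so a change-of-rings spectral sequence over the Noetherian ring $B$ gives the claim.

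Finally I would evaluate $H_\ast\!\big(D^{(1)}_\ast\otimes_{R}\cdots\otimes_{R}D^{(n)}_\ast\big)$ by induction on the number $k$ of factors tensored in, via the K\"unneth spectral sequence $E^2_{p,q}=\bigoplus_{a+b=q}\mathrm{Tor}^R_p\big(H_a(D^{(1)}_\ast\otimes_{R}\cdots\otimes_{R}D^{(k)}_\ast),\,H_b(D^{(k+1)}_\ast)\big)$, which has $E^2_{p,q}=0$ for $p>2$ since $R$ has global dimension $2$. The inductive statement is that the $m$-th homology of the first $k$ factors is finitely generated over $R$ for $m\leq n-1$ and finitely generated over $\mathbb{Z}$ for $m\leq k-1$: the first half is immediate from Noetherianity, and for the second, a nonzero $E^2_{p,q}$-summand contributing to degree $m\leq k$ has $p+a+b=m$ with either $b=0$, where $H_0(D^{(k+1)}_\ast)$ is $\mathbb{Z}$-finite and the lemma applies, or $b\geq 1$, which forces $a\leq k-1$, so the inductive hypothesis makes $H_a$ $\mathbb{Z}$-finite and the lemma applies again. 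Taking $k=n$ yields that $H_m(K;\mathbb{Z})$ is finitely generated for $m\leq n-1$. I expect the module lemma, together with keeping this last bookkeeping straight, to be the only real content; the essential idea is simply that the non-triviality of each $\phi_i$ makes the bottom homology $H_0(D^{(i)}_\ast)$ finite over $\mathbb{Z}$, and the induction then propagates this.
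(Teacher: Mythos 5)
Your proof is correct, and it follows the same essential route as the cited result (the paper itself only cites \cite[Proposition~7.1]{something1} without proof, so the comparison is to that proof): choose resolutions $P^{(i)}_\ast$ that are finitely generated through degree $n$, push everything down to the Laurent ring $\Lambda=\mathbb{Z}[t^{\pm1}]=\mathbb{Z}[G/\ker\phi]$, identify $H_\ast(\ker\phi)$ with the homology of $D^{(1)}_\ast\otimes_\Lambda\cdots\otimes_\Lambda D^{(n)}_\ast$, and exploit the fact that $H_0(D^{(i)}_\ast)\cong\Lambda/(t^{d_i}-1)$ is $\mathbb{Z}$-finite precisely because $\phi|_{\Gamma_i}\neq 0$, together with a K\"unneth-type induction. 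The bookkeeping in your induction is right: in the spectral sequence contributing to $H_m$ with $m\leq k$, the constraints $p+a+b=m$ always keep $a,b$ in the range $\leq n-1$ where $\Lambda$-finiteness is known, and the dichotomy $b=0$ versus $b\geq1$ feeds the $\mathrm{Tor}$-finiteness lemma correctly. The lemma itself (a f.g.\ $\Lambda$-module against a $\Lambda$-module that is $\mathbb{Z}$-finite has $\mathbb{Z}$-finite $\mathrm{Tor}$) is the one step you spell out in more detail than the source; your Cayley--Hamilton reduction to a quotient $B=\Lambda/(g)$ with $g$ a nonzerodivisor and unit constant term, followed by the change-of-rings spectral sequence over the Noetherian ring $B$, is sound. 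Two very minor points worth tidying in a writeup: when producing the annihilator $g(t)$ you should explicitly note that the torsion subgroup of $N$ is $t$-invariant and combine its annihilator with the one coming from Cayley--Hamilton on the free quotient; and the remark that $\Lambda$ has global dimension $2$ is true but not actually needed, since the sum over $p+q=m$ is already finite.
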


By using that result, in \cite[Theorem 7.2]{something1} they show that if $\Gamma_{1},\dots, \Gamma_{n}$ are non-abelian limit groups over free groups and $S$ is the kernel of an epimorphism $\Gamma_{1}\times \cdots \times \Gamma_{n} \mapsto \mathbb{Z}$ such that the restriction to each of the $\Gamma_{i}$ is an epimorphism, then $H_{n}(S;\mathbb{Q})$ has infinite dimension. 

The same statement holds for limit groups over Droms RAAGs with trivial center, namely,

\begin{theorem}\label{Theorem Section 7.2}
Let $\Gamma_{1},\dots, \Gamma_{n}$ be limit groups over Droms RAAGs with trivial center and let $S$ be the kernel of an epimorphism $\phi \colon \Gamma_{1} \times \cdots \times \Gamma_{n} \mapsto \mathbb{Z}$. If the restriction of $\phi$ to each of the $\Gamma_{i}$ is an epimorphism, then $H_{n}(S;\mathbb{Q})$ has infinite dimension.
\end{theorem}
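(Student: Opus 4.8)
The plan is to reduce Theorem~\ref{Theorem Section 7.2} to the free-group case already established in \cite[Theorem 7.2]{something1} by pushing forward along a suitable homomorphism, and then (if that fails) to run the same homological argument from scratch using the tools available in the excerpt. The cleanest route exploits Property~\ref{Property (6)} (limit groups over Droms RAAGs are of type $F_\infty$, so in particular of type $FP_n(\mathbb{Z})$) together with Proposition~\ref{Prev Proposition}, which gives that $H_j(\text{ker }\phi;\mathbb{Z})$ is finitely generated for $j\leq n-1$. So the only homology group in the relevant range that can be infinite-dimensional is $H_n$, and the task is to show it genuinely is.

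First I would set up the inductive scaffolding exactly as in \cite{something1}. Write $\Delta = \Gamma_1\times\cdots\times\Gamma_n$ and $\psi_i$ for the restriction $\phi|_{\Gamma_i}$, which is an epimorphism onto $\mathbb{Z}$ by hypothesis; let $K_i = \ker\psi_i \lhd \Gamma_i$. One compares $S = \ker\phi$ with the obvious product $K_1\times\cdots\times K_n$, which has infinite index in $S$ (it is $S\cap(\Gamma_1\times\cdots\times\Gamma_{n-1}\times K_n)$ type phenomena), and uses the LHS spectral sequence, or more concretely a Mayer--Vietoris/Wang sequence built from the infinite-cyclic quotient, to relate $H_*(S;\mathbb{Q})$ to the homology of the factors. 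The key algebraic input is that each $\Gamma_i$, having trivial center, is genuinely non-abelian, and by Property~\ref{Property (5)} (applied through the residual-Droms structure) any such $\Gamma_i$ surjects a non-abelian free group, so $H_1(K_i;\mathbb{Q})$ is infinite-dimensional whenever $K_i$ is infinitely generated — and $K_i$ is infinitely generated precisely because $\Gamma_i$ is non-abelian hyperbolic-like with $\Gamma_i/K_i\cong\mathbb{Z}$ (a finitely generated normal subgroup of infinite index in such a group would contradict Theorem~\ref{Theorem Section 5} applied to $\Gamma_i$, or one argues directly via the HNN/graph-of-groups structure from Proposition~\ref{Proposition 1}).

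The engine of the proof is then a Künneth-type computation: $H_n(K_1\times\cdots\times K_n;\mathbb{Q})$ contains the tensor product $H_1(K_1;\mathbb{Q})\otimes\cdots\otimes H_1(K_n;\mathbb{Q})$, which is infinite-dimensional since each factor is. One must promote this to a statement about $H_n(S;\mathbb{Q})$ itself. Here I would use the short exact sequence $1\to (K_1\times\cdots\times K_n)\to S\to Q\to 1$ where $Q\cong\mathbb{Z}^{n-1}$ (the image of $S$ in the product of the $\mathbb{Z}$-quotients), feed it into the LHS spectral sequence with $\mathbb{Q}$ coefficients, and observe that the $E^2$ page has $E^2_{0,n} = H_0(Q; H_n(K_1\times\cdots\times K_n;\mathbb{Q}))$, which is the coinvariants of an infinite-dimensional module under a $\mathbb{Z}^{n-1}$-action — and the structure of that action (it shifts the Laurent-polynomial gradings coming from each $H_1(K_i;\mathbb{Q})$ as a module over $\mathbb{Q}[t_i^{\pm1}]$) is exactly the one handled in \cite[Theorem 7.2]{something1}, so the coinvariants remain infinite-dimensional and survive to $E^\infty$ because no differential can land in or leave bidegree $(0,n)$ for dimension reasons on the relevant pages.

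\textbf{The main obstacle} I anticipate is not the spectral-sequence bookkeeping but verifying that $H_1(K_i;\mathbb{Q})$ is infinite-dimensional \emph{as a $\mathbb{Q}[t_i^{\pm1}]$-module of the right shape} — i.e. that it is not merely infinite-dimensional but fails to be finitely generated over the group ring of the quotient, which is what makes the coinvariants infinite-dimensional rather than just the module. In the free-group case this is handled using the explicit tree/JSJ structure of limit groups; for Droms RAAGs I would invoke Proposition~\ref{Proposition 1} to get an acylindrical graph-of-groups decomposition of $\Gamma_i$ (or a Droms-RAAG identification), and argue that the action of $\Gamma_i$ on the associated Bass--Serre tree forces $K_i$ to contain a non-abelian free group on which $t_i$ acts with infinitely many distinct orbit-representatives of a generating set, exactly as in \cite[Theorem 7.2]{something1}. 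Once that module-theoretic claim is in place, everything else is a transcription of the Bridson--Howie--Miller--Short argument, using Property~\ref{Property (6)} in place of finite presentability/$F_\infty$ for limit groups over free groups, and Property~\ref{Property (5)} in place of \cite[Theorem 2]{normalizer}.
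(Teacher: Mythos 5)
The paper's proof proceeds by induction on $n$: it peels off one factor at a time via the short exact sequence $1\to S_{n-1}\to S\xrightarrow{p_n}\Gamma_n\to 1$ where $S_{n-1}=\ker(\phi|_{\Gamma_1\times\cdots\times\Gamma_{n-1}})$, locates an infinite-dimensional summand $H_1(L_n;\mathbb{Q})$ inside $E^2_{1,n-1}=H_1(\Gamma_n;H_{n-1}(S_{n-1};\mathbb{Q}))$, and controls the incoming differentials because their sources involve $H_{n-r}(S_{n-1};\mathbb{Q})$ for $r\ge 2$, which Proposition~\ref{Prev Proposition} guarantees are finite-dimensional. Your approach instead uses the short exact sequence $1\to K_1\times\cdots\times K_n\to S\to\mathbb{Z}^{n-1}\to 1$ directly, with no induction. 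This is a genuinely different decomposition, and it does not obviously enjoy the same control.

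The concrete gap is your assertion that ``no differential can land in or leave bidegree $(0,n)$ for dimension reasons on the relevant pages.'' Outgoing differentials from $(0,n)$ do vanish, but for $n\ge 3$ the incoming differentials $d^r\colon E^r_{r,\,n-r+1}\to E^r_{0,n}$ are live for $2\le r\le n-1$, since $\mathbb{Z}^{n-1}$ has homological dimension $n-1$. And unlike in the paper's setup, the sources of these differentials are \emph{not} finite-dimensional: by K\"unneth, $H_{n-r+1}(K_1\times\cdots\times K_n;\mathbb{Q})$ already contains $H_1(K_1;\mathbb{Q})\otimes H_0(K_2;\mathbb{Q})\otimes\cdots$, which is infinite-dimensional because each $K_i$ is infinitely generated. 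So the argument that $E^2_{0,n}$ survives to $E^\infty_{0,n}$ cannot be a dimension count; you would need to analyze the differentials explicitly or show that the interesting part of $E^2_{0,n}$ sits outside their images. The advantage of the paper's inductive SES is precisely that Proposition~\ref{Prev Proposition} collapses all the competing terms to finite dimension, which is what lets the infinite-dimensional piece at $(1,n-1)$ persist. Your proposal also leaves the coinvariants computation (showing $H_0(\mathbb{Z}^{n-1};H_1(K_1;\mathbb{Q})\otimes\cdots\otimes H_1(K_n;\mathbb{Q}))$ is infinite-dimensional) explicitly unresolved, labeling it ``the main obstacle''; that module-theoretic step is essential and, combined with the differential issue above, means the proof is not complete as written, even though the setup and the use of Properties~\ref{Property (5)}, \ref{Property (6)} and Theorem~\ref{Theorem Section 5} are the right ingredients.
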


The proof is an adaptation of the proof from \cite{something1} to the setting of limit groups over Droms RAAGs:

\begin{proof}
The proof is by induction on $n$. If $n=1$, the group $S=\text{ker }\phi$ is a normal subgroup of a limit group over a Droms RAAG with trivial center. If $H_{1}(S;\mathbb{Q})$ were finite dimensional, $S$ would be finitely generated by Property~\ref{Property (5)}. Then, $S$ would have finite index in $\Gamma_{1}$ (Theorem~\ref{Theorem Section 5}) and this is a contradiction because $\phi$ is an epimorphism.

If $n>2$, let us consider the Lyndon–-Hochschild–-Serre spectral sequence for
\[ 
\begin{tikzcd}
1 \arrow{r}  & S_{n-1}= \text{ker }(p_{n}) \arrow{r} & S \arrow{r}{p_{n}} & \Gamma_{n} \arrow{r} & 1.
\end{tikzcd} 
\]
By Proposition~\ref{Prev Proposition}, $H_{j}(S;\mathbb{Q})$ is finite dimensional for $j<n$, so in order to prove that $H_{n}(S; \mathbb{Q})$ is infinite dimensional, it suffices to show that $ H_{1}(\Gamma_{n}; H_{n-1}(S_{n-1}; \mathbb{Q}))$ is infinite dimensional. By using theory of modules over a PID and Shapiro's Lemma, it can be shown that $ H_{1}(\Gamma_{n}; H_{n-1}(S_{n-1}; \mathbb{Q}))$ is the direct sum of two homology groups, say $A$ and $B$, and that $B \cong H_{1}(L_{n}; \mathbb{Q})$. Recall that $L_{n}= S \cap \Gamma_{n}$, so it is the kernel of the epimorphism $\phi_{| \Gamma_{n}}$. Thus, as it has infinite index in $\Gamma_{n}$, $L_{n}$ is infinitely generated (Theorem~\ref{Theorem Section 5}). In conclusion, $H_{1}(L_{n}; \mathbb{Q})$ is infinite dimensional by Property~\ref{Property (5)}.
\end{proof}

Let us end the section by proving Theorem~\ref{Theorem Section 7}.

\begin{proof}[Proof of Theorem~\ref{Theorem Section 7}]
Suppose that the restriction of $\phi$ to some $\Gamma_{i}$ is trivial, for example to $\Gamma_{1}$. Then, $\phi \colon \Gamma_{2}\times \cdots \times \Gamma_{n} \mapsto \mathbb{Z}$ is an epimorphism. Thus, we may assume that the restriction of $\phi$ to each of the factors is non-trivial. Then, $\phi(\Gamma_{i})=m\mathbb{Z}$ for some non-zero integer $m$. We may replace $\Gamma_{i}$ by $\Delta_{i}= \phi^{-1}(m\mathbb{Z})$. Note that $\Delta_{i}$ has finite index in $\Gamma_{i}$ and that the restriction of $\phi$ to $\Delta_{1}\times \cdots \times \Delta_{n}$ satisfies the conditions of Theorem~\ref{Theorem Section 7.2}.
\end{proof}

\section{Proof of Theorem~\ref{Theorem C}}
\label{Section 8}

Let us recall the statement of Theorem~\ref{Theorem C}:

\begin{theorem}\label{Theorem C}
Let $\Gamma_{1},\dots,\Gamma_{n}$ be limit groups over Droms RAAGs with trivial center and let $S< \Gamma_{1}\times \cdots \times \Gamma_{n}$ be a finitely generated full subdirect product. Then either:\\[3pt]
(1) $S$ is of finite index; or\\[3pt]
(2) $S$ is of infinite index and has a finite index subgroup $S_{0}<S$ such that $H_{j}(S_{0}; \mathbb{Q})$ has infinite dimension for some $j\leq n$.
\end{theorem}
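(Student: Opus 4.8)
The plan is to follow the strategy of Bridson--Howie--Miller--Short, feeding in the structural facts assembled above. First I would reduce, exactly as in Proposition~\ref{Proposition 2}, to the standing situation: $n\geq 2$, every projection $p_{i}\colon S\to\Gamma_{i}$ surjective, every $L_{i}=S\cap\Gamma_{i}$ non-trivial, every $\Gamma_{i}$ of trivial centre, and every $\Gamma_{i}$ an HNN extension over an infinite cyclic (or trivial) subgroup with stable letter $t_{i}\in L_{i}$. If $S$ has finite index, conclusion (1) holds; and if $S$ has infinite index but some finite-index $S_{1}<S$ already has $H_{j}(S_{1};\mathbb{Q})$ infinite dimensional for some $j\leq n$, conclusion (2) holds. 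So the real work is to show that the remaining case --- $S$ of infinite index, with $H_{j}(S_{1};\mathbb{Q})$ finite dimensional for every finite-index $S_{1}<S$ and every $j\leq n$ --- cannot occur.

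In that case Theorem~\ref{Theorem Section 6} applies, so each $\Gamma_{j}/L_{j}$ is virtually nilpotent; hence $G:=\prod_{j}\Gamma_{j}/L_{j}$ is finitely generated virtually nilpotent, $\overline{S}:=S/\prod_{j}L_{j}$ is a subgroup of $G$, and since $\prod_{j}L_{j}\leq S$ we have $[\prod_{j}\Gamma_{j}:S]=[G:\overline{S}]$. So it suffices to prove that $\overline{S}$ has finite index in $G$, i.e.\ that the Hirsch lengths satisfy $h(\overline{S})=h(G)$. I would then pass to finite-index subgroups: replacing each $\Gamma_{j}$ by an appropriate finite-index subgroup (still a limit group over a Droms RAAG by Property~\ref{Property (2)}) and, where needed, splitting off a free-abelian direct factor as in the proof of Proposition~\ref{Proposition 2}(4), and replacing $S$ by a finite-index subgroup (which preserves the standing homological hypothesis), one may assume that $G$ is nilpotent; its lower central series $G=\gamma_{1}G\supseteq\gamma_{2}G\supseteq\cdots$ is what drives the induction.

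The core is an induction on the nilpotency class of $G$, equivalently on the corank $h(G)-h(\overline{S})$. Assume $\overline{S}$ has infinite index, so this number is positive. By Lemma~\ref{Nilpotent 1} and Corollary~\ref{Nilpotent 2} there are a finite-index subgroup $K\leq G$ and an epimorphism $f\colon K\twoheadrightarrow\mathbb{Z}$ with $\overline{S}\cap K\leq\ker f$. Pulling $f$ back along $\pi\colon\prod_{j}\Gamma_{j}\to G$ produces a finite-index subgroup $\widehat{K}\leq\prod_{j}\Gamma_{j}$ and an epimorphism $\theta\colon\widehat{K}\twoheadrightarrow\mathbb{Z}$ with $\prod_{j}L_{j}\leq\ker\theta$, with $S\cap\widehat{K}$ of finite index in $S$, and with $S\cap\widehat{K}\leq\ker\theta$. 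Replacing $\widehat{K}$ by the finite-index direct product $\prod_{j}(\Gamma_{j}\cap\widehat{K})$ --- once more a product of limit groups over Droms RAAGs, of trivial centre after peeling off a free-abelian factor --- and rescaling $\theta$ so that its restriction to each factor is surjective, Theorem~\ref{Theorem Section 7}, in the sharp form of Theorem~\ref{Theorem Section 7.2}, shows that $\ker\theta$ has a finite-index subgroup with infinite-dimensional $H_{k}$ for some $k\leq n$. If $\ker\theta$ is commensurable with $S$ --- which is automatic when the corank is $1$ --- this is a finite-index subgroup of $S$ with infinite-dimensional $k$-th homology, contradicting the standing hypothesis and completing the proof. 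Otherwise $\overline{S}\cap K$ has infinite index in the finitely generated nilpotent group $\ker f$, which has strictly smaller Hirsch length than $G$; one then repeats the analysis with $\ker f$ in the role of $G$ --- now at the cost of replacing $\prod_{j}\Gamma_{j}$ by a subgroup $P$ (possibly of infinite index) with $\ker\theta\trianglelefteq P$ and $P/\ker\theta\cong\mathbb{Z}$ --- invokes the inductive hypothesis, and transports the infinite-dimensional homology back to $S$ via the Wang (Lyndon--Hochschild--Serre) sequence of $1\to\ker\theta\to P\to\mathbb{Z}\to1$, using Proposition~\ref{Prev Proposition} and Property~\ref{Property (5)} to keep the intervening homology groups under control.

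The routine part will be the spectral-sequence bookkeeping and the standard structure theory of finitely generated virtually nilpotent (indeed polycyclic) groups. The main obstacle --- and the reason a genuine induction rather than a single appeal to Theorem~\ref{Theorem Section 7} is needed --- is the tension between being of finite index in the nilpotent quotient $G$ and being of finite index in $\prod_{j}\Gamma_{j}$: an epimorphism onto $\mathbb{Z}$ with kernel commensurable to $\overline{S}$ can be realised on a finite-index subgroup of $\prod_{j}\Gamma_{j}$ only in corank $1$, so in higher corank one must iterate down the lower central series while simultaneously keeping (i) the ambient group, up to finite index, a direct product of limit groups over Droms RAAGs with trivial centre --- this is where Property~\ref{Property (2)} and the centre-splitting of Proposition~\ref{Proposition 2}(4) get used repeatedly --- and (ii) the successive kernels commensurable with $S$, so that the infinite-dimensional homology finally produced genuinely lies in a finite-index subgroup of $S$ and yields conclusion (2).
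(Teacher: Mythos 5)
Your setup is exactly right: reduce via Proposition~\ref{Proposition 2}, invoke Theorem~\ref{Theorem Section 6} to make $\Gamma/L$ virtually nilpotent, use Lemma~\ref{Nilpotent 1} to land $S$ inside the kernel $N$ of an epimorphism from a finite-index subgroup $\Lambda\leq\Gamma$ to $\mathbb{Z}$, and then feed Theorem~\ref{Theorem Section 7} the kernel $N$ to produce a finite-index $N_{0}\leq N$ with $H_{j}(N_{0};\mathbb{Q})$ infinite dimensional for some $j\leq n$. Where you go off track is the bridge from $N_{0}$ back to $S$. Your plan is to iterate the application of Lemma~\ref{Nilpotent 1} by hand, inducting on the corank $h(G)-h(\overline{S})$, and in corank greater than one to ``repeat the analysis with $\ker f$ in the role of $G$'' after replacing $\prod_{j}\Gamma_{j}$ by a subgroup $P$ of possibly infinite index with $P/\ker\theta\cong\mathbb{Z}$. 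That step does not close: once you pass to $P$ and $\ker\theta$, you have left the category of direct products of limit groups over Droms RAAGs, so Theorem~\ref{Theorem Section 7} (and Theorem~\ref{Theorem Section 7.2}) can no longer be reapplied, and the Wang sequence of $1\to\ker\theta\to P\to\mathbb{Z}\to1$ cannot by itself transport infinite-dimensional homology to a finite-index subgroup of $S$, because $P$ is an essentially unconstrained subgroup whose homology you do not control. You have also not formulated a precise inductive hypothesis that survives this change of ambient group.

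The fix is already in your hands --- you cite Corollary~\ref{Nilpotent 2} but use it only as a restatement of Lemma~\ref{Nilpotent 1}. The intended use is different and does all the bridging in one step: apply Corollary~\ref{Nilpotent 2} to the virtually nilpotent quotient to obtain a subgroup $S_{0}\leq S\cap N_{0}$ of finite index in $S$ together with a subnormal chain $S_{0}\triangleleft S_{1}\triangleleft\cdots\triangleleft S_{k}=N_{0}$ in which every successive quotient $S_{i+1}/S_{i}$ is finite or cyclic. Then a short Lyndon--Hochschild--Serre argument (the paper's Lemma 8.3) shows that for a normal inclusion $S_{i}\triangleleft S_{i+1}$ with finite or cyclic quotient, finite dimensionality of $H_{j}(S_{i};\mathbb{Q})$ for all $j\leq n$ forces finite dimensionality of $H_{j}(S_{i+1};\mathbb{Q})$ for all $j\leq n$: since $S_{i+1}/S_{i}$ has homological dimension $0$ or $1$, the $E^{2}$-page has only two nonzero columns, both of finite dimension. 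Iterating up the chain, the standing assumption that every finite-index $S_{0}<S$ has finite-dimensional $H_{j}$ for $j\leq n$ would imply the same for $N_{0}$, contradicting Theorem~\ref{Theorem Section 7}. This avoids the iteration on corank entirely, never leaves the class of direct products of limit groups, and replaces your informal Wang-sequence step by the precise spectral-sequence lemma that the paper isolates.
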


\begin{proof}
Let $\Gamma= \Gamma_{1} \times \cdots \times \Gamma_{n}$ and $L=L_{1}\times \cdots \times L_{n}$. We only need to consider the case when $S$ has infinite index in $\Gamma$. By contradiction, suppose that for all the subgroups $S_{0}$ of finite index in $S$ and for all $0\leq j \leq n$, $H_{j}(S_{0};\mathbb{Q})$ has finite dimension.

By Theorem~\ref{Theorem Section 6}, $\Gamma_{i}/L_{i}$ is virtually nilpotent, for all $i\in \{1,\dots, n\}$. Thus, $\Gamma/L$ is virtually nilpotent. By applying Lemma~\ref{Nilpotent 1} to $\Gamma /L$, there is a finite index subgroup $\Lambda$ in $\Gamma$ containing $L$ and an epimorphism $f\colon \Lambda / L \mapsto \mathbb{Z}$ such that $\Lambda / L \cap S / L \subseteq \text{ker }f.$ Define $g$ to be $f \circ \pi$, where $\pi \colon \Lambda \mapsto \Lambda /L$ is the projection map. By definition, $g$ is an epimorphism and $\Lambda \cap S$ is contained in $\text{ker }g$.

We replace $S$ by $S\cap \Lambda$. Let $p_{i}\colon \Gamma \mapsto \Gamma_{i}$ be the $i$-th projection map. Then, we also replace $\Gamma_{i}$ by $p_{i}(\Lambda)$. By using this argument we ensure that $L\subseteq S \subseteq N$, where $N$ is the kernel of an epimorphism $\Lambda \mapsto \mathbb{Z}$. By Theorem~\ref{Theorem Section 7}, there is a finite index subgroup $N_{0}$ in $N$ and $j\leq n$ such that $H_{j}(N_{0};\mathbb{Q})$ has infinite dimension. By Corollary~\ref{Nilpotent 2}, there is a subgroup $S_{0}$ contained in $S\cap N_{0}$, which has finite index in $S$ and a series $S_{0} \triangleleft S_{1}\triangleleft \cdots \triangleleft S_{k}=N_{0}$ with $S_{i+1}/S_{i}$ finite or cyclic for each $i$.

We now use the following lemma to reach a contradiction.

\begin{lemma}\cite[Lemma 8.3]{something1}
Let $S_{0}$ be a normal subgroup in $S_{1}$ with $S_{1} /S_{0}$ finite or cyclic. If $H_{j}(S_{0};\mathbb{Q})$ is finite dimensional for $0\leq j \leq n$, then $H_{j}(S_{1};\mathbb{Q})$ is finite dimensional for $0\leq j \leq n$.
\end{lemma}

\begin{proof}
Consider the Lyndon–-Hochschild–-Serre spectral sequence in homology for the short exact sequence
\[ 
\begin{tikzcd}
1 \arrow{r}  &S_{0} \arrow{r} & S_{1} \arrow{r} & S_{1}/S_{0} \arrow{r} & 1.
\end{tikzcd} 
\]
It is known (see, for instance, \cite[Chapter 3, Corollary 10.2]{something5} and \cite[Chapter 8, Exercise 6.6]{something5}) that $S_{1}/S_{0}$ has homological dimension $0$ if it is finite, and $1$ if it is infinite cyclic. Hence, $E^2_{p,q}=0$ for all $p\geq 2$. Therefore, the spectral sequence stabilizes at the $E^2$ page, so \[\dim{H_{j}(S_{1};\mathbb{Q})}=\dim{H_{0}(S_{1}/S_{0};H_{j}(S_{0};\mathbb{Q}))}+\dim{H_{1}(S_{1}/S_{0};H_{j-1}(S_{0};\mathbb{Q}))}.\]
Since $S_{1}/S_{0}$ is of type $\text{FP}_{1}$, both dimensions are finite.
\end{proof}

Coming back to the main proof, by repeatedly applying this lemma, we obtain that $H_{j}(N_{0};\mathbb{Q})$ has finite dimension for all $j\leq n$, contradicting Theorem~\ref{Theorem Section 7}.
\end{proof}

\section{Proof of Theorem~\ref{Theorem B}}
\label{Section 9}

In this section we prove Theorem~\ref{Theorem B}. Let us first recall the statement:

\begin{theorem}\label{Theorem B}
Let $\Gamma_{1},\dots, \Gamma_{n}$ be limit groups over Droms RAAGs. Let $S$ be a finitely generated subgroup in $\Gamma_{1}\times \cdots \times \Gamma_{n}$ and set $L_{i}= S \cap \Gamma_{i}$ for $i\in \{1,\dots,n\}$.

If $L_{i}$ is finitely generated for $1 \leq i \leq r$ and not finitely generated for $i>r$, then there is a subgroup of finite index $S_{0}<S$ such that $S_{0}= S_{1}\times S_{2}$, where\\[5pt]
(1) $S_{1}$ is the direct product of limit groups over Droms RAAGs,\\[3pt]
(2) If $n>r$, $H_{k}(S_{2};\mathbb{Q})$ is infinite dimensional for some $k\leq n-r$.
\end{theorem}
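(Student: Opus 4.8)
The plan is to follow the proof of Theorem~B in \cite{something1}, feeding in Theorem~\ref{Theorem Section 5}, Theorem~\ref{Theorem C} and Properties~\ref{Property (2)}, \ref{Property (5)}, \ref{Property (6)} at each point where the free-group argument uses the structure theory of limit groups. First I would do the standard reductions. Since $S$ is finitely generated, each $p_{i}(S)$ is a finitely generated subgroup of a limit group over a Droms RAAG, hence itself one by Property~\ref{Property (2)}; replacing $\Gamma_{i}$ by $p_{i}(S)$ we may assume $S$ is a subdirect product, so each $L_{i}$ is normal in $\Gamma_{i}$. If some $L_{i}$ is trivial --- which forces $i\le r$ --- then $S$ embeds in the product of the remaining factors, and deleting that factor lowers both $n$ and $r$ by one while leaving $n-r$ and the statement unchanged; iterating, we may assume $S$ is a \emph{full} subdirect product. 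Finally, using Proposition~\ref{Proposition 1} we write $\Gamma_{i}\cong\mathbb{Z}^{m_{i}}\times\Gamma_{i}^{\circ}$ with $\Gamma_{i}^{\circ}$ a limit group over a Droms RAAG of trivial centre (possibly trivial); the free abelian factors $\mathbb{Z}^{m_{i}}$ will be absorbed into the ``$S_{1}$'' part at the end, since $\mathbb{Z}^{m}$ is itself a limit group over a Droms RAAG.

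Next I would handle the coordinates $i\le r$. Here $L_{i}$ is finitely generated, nontrivial and normal in $\Gamma_{i}$. Set $M_{i}:=L_{i}\cap\Gamma_{i}^{\circ}$, a normal subgroup of $\Gamma_{i}^{\circ}$. If $M_{i}\neq 1$, then since $\Gamma_{i}^{\circ}$ lies in $\mathcal{C}$ (Property~\ref{Property (3)}) and has trivial centre, Theorem~\ref{Theorem Section 5} forces $[\Gamma_{i}^{\circ}:M_{i}]<\infty$; if $M_{i}=1$, then $L_{i}$ centralises $\Gamma_{i}^{\circ}$ and so lies in the central factor $\mathbb{Z}^{m_{i}}$. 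In either case $L_{i}$ is commensurable with a direct factor of a finite-index subgroup of $\Gamma_{i}$. Reproducing the argument of \cite{something1}, one passes to a finite-index subgroup $S_{0}<S$ (after a corresponding finite-index reduction in each $\Gamma_{i}$) that splits as an internal direct product $S_{0}=S_{1}\times S_{2}$ with $S_{1}$ carrying the first $r$ coordinates and containing a finite-index subgroup isomorphic to $M_{1}\times\cdots\times M_{r}$ times a free abelian group. Since each $M_{i}$ is a finitely generated subgroup of a limit group over a Droms RAAG (Property~\ref{Property (2)}), and free abelian groups are limit groups over Droms RAAGs, after one further finite-index reduction $S_{1}$ is a direct product of limit groups over Droms RAAGs, which is conclusion~(1). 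If $n=r$ we are done.

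Now suppose $n>r$. After discarding the abelian parts, $S_{2}$ is a finitely generated full subdirect product in a product of the $n-r$ trivial-centre limit groups over Droms RAAGs $\Gamma_{r+1}^{\circ},\dots,\Gamma_{n}^{\circ}$, and $S_{2}\cap\Gamma_{j}^{\circ}$ is commensurable with $L_{j}\cap\Gamma_{j}^{\circ}$, which is not finitely generated because $L_{j}$ is not (for $j>r$). Hence no finite-index subgroup of $S_{2}$ can have finite index in $\Gamma_{r+1}^{\circ}\times\cdots\times\Gamma_{n}^{\circ}$, since that would make its intersections with the factors finitely generated. Therefore $S_{2}$ falls into case~(2) of Theorem~\ref{Theorem C}: there is a finite-index subgroup $S_{2}'<S_{2}$ with $H_{k}(S_{2}';\mathbb{Q})$ infinite-dimensional for some $k\le n-r$. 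Replacing $S_{0}$ by $S_{1}\times S_{2}'$, which is still of finite index in $S$, gives conclusion~(2).

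The part I expect to be the main obstacle is producing the genuine \emph{internal} direct product decomposition $S_{0}=S_{1}\times S_{2}$ after passing to a finite-index subgroup, together with the bookkeeping needed to isolate and reattach the central free abelian factors $\mathbb{Z}^{m_{i}}$: these do not split off as global direct factors and only re-emerge once the pieces are recombined into $S_{1}$, so this is precisely the portion of the argument of \cite{something1} that has to be carried out by hand rather than merely quoted, verifying at each appeal to the structure theory that Theorem~\ref{Theorem Section 5} and Properties~\ref{Property (2)}, \ref{Property (5)}, \ref{Property (6)} supply what the free-group proof obtains from the classical theory of limit groups.
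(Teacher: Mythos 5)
Your plan diverges from the paper at the point where you try to absorb the central free abelian factors, and that divergence introduces a genuine gap. You set $M_{i}:=L_{i}\cap\Gamma_{i}^{\circ}$ and claim that when $M_{i}\neq 1$, Theorem~\ref{Theorem Section 5} forces $[\Gamma_{i}^{\circ}:M_{i}]<\infty$. But Theorem~\ref{Theorem Section 5} requires a \emph{finitely generated} subgroup $G$ with $1\neq M_{i}<G<\Gamma_{i}^{\circ}$, and $M_{i}$ itself need not be finitely generated even when $L_{i}$ is: intersecting a finitely generated subgroup with a direct factor can destroy finite generation. For a concrete instance, take $\Gamma_{i}=\mathbb{Z}\times F_{2}$ with $\mathbb{Z}=\langle z\rangle$, $F_{2}=\langle a,b\rangle$, and $L_{i}=\langle (z,a),(z,b)\rangle$. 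Then $L_{i}$ is free of rank two and normal in $\Gamma_{i}$, but $L_{i}\cap F_{2}=\ker\big(L_{i}\to\mathbb{Z}\big)$ is a normal subgroup of infinite index in a free group of rank two, hence free of infinite rank. So the implication ``$M_{i}\neq 1\ \Rightarrow\ [\Gamma_{i}^{\circ}:M_{i}]<\infty$'' is false, and with it the commensurability statement that feeds your internal splitting of $S_{0}$. This is a real obstruction, separate from (and prior to) the bookkeeping difficulty you flag in your last paragraph.

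The paper sidesteps this by organising the centre reduction at the level of $S$ rather than at the level of each $L_{i}$. It passes to the image $S'=\pi(S)$ in $\Gamma_{1}'\times\cdots\times\Gamma_{n}'$ (the product of the trivial-centre parts) and then proves, using the subdirect-product structure, that $S\cap\Gamma_{i}$ is finitely generated if and only if $\pi(S)\cap\Gamma_{i}'$ is. The crux is the identity $\pi(S)\cap\Gamma_{1}'\cong\pi(S\cap\Gamma_{1})\times A$, where $A$ is shown to be isomorphic to a quotient of a subgroup of $\mathbb{Z}^{m_{2}+\cdots+m_{n}}$ --- hence automatically finitely generated abelian. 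This is exactly what is needed to move the hypothesis ``$L_{i}$ finitely generated for $i\le r$'' across the projection $\pi$, after which every factor has trivial centre, $L_{i}^{S'}$ is normal, non-trivial and finitely generated, Theorem~\ref{Theorem Section 5} applies directly, and the direct-product splitting $\hat S_{0}=\Delta\times\hat S_{2}$ with $\Delta=L_{1}^{S'}\times\cdots\times L_{r}^{S'}$ drops out cleanly. Your treatment of the case $n>r$ via Theorem~\ref{Theorem C} matches the paper once this reduction is in place, so the missing ingredient is precisely the ``$f.g.$\ is preserved under $\pi$'' equivalence, not the splitting bookkeeping per se.
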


We first reduce to the case that the limit groups $\Gamma_{i}$ have trivial center. Assume that for $i\in \{1,\dots, n\}$, $\Gamma_{i}= \mathbb{Z}^{m_{i}} \times \Gamma_{i}^\prime$ with $m_{i}\in \mathbb{N} \cup \{0\}$ and $\Gamma_{i}^\prime$ is a limit group over a Droms RAAG with trivial center. Then, \[ \Gamma_{1}\times \cdots \times \Gamma_{n}= \mathbb{Z}^{m_{1}+\dots+m_{n}} \times (\Gamma_{1}^\prime\times \cdots \times \Gamma_{n}^\prime),\]
\[ S \cong (S \cap \mathbb{Z}^{m_{1}+\dots+m_{n}}) \times \pi(S),\] where $\pi$ is the projection map $\mathbb{Z}^{m_{1}+\dots+m_{n}} \times (\Gamma_{1}^\prime\times \cdots \times \Gamma_{n}^\prime) \mapsto \Gamma_{1}^\prime\times \cdots \times \Gamma_{n}^\prime.$

Since $S \cap \mathbb{Z}^{m_{1}+\dots+m_{n}}$ is a limit group over a Droms RAAG, it suffices to work with the finitely generated subgroup $\pi(S) < \Gamma_{1}^\prime\times \cdots \times \Gamma_{n}^\prime$.

Let us show that for $i\in \{1,\dots,n\}$, $S \cap \Gamma_{i}$ is finitely generated if and only if $\pi(S) \cap \Gamma_{i}^\prime$ is finitely generated. In order to distinguish both groups, we denote them by ${L_{i}}^S$ and ${L_{i}}^{\pi(S)}$, respectively. We prove it for $i=1$.

By definition, \[{L_{1}}^{\pi(S)}= \pi(S) \cap \Gamma_{1}^\prime= \{s\in \Gamma_{1}^\prime \mid (sz_{1},z_{2},\dots,z_{n})\in S, z_{i}\in \mathbb{Z}^{m_{i}}\},\]
\[\pi({L_{1}}^S)= \pi(S \cap \Gamma_{1})= \{ s\in \Gamma_{1}^\prime \mid (sz_{1},1,\dots,1)\in S, z_{1}\in \mathbb{Z}^{m_{1}} \}.\]
We claim that ${L_{1}}^{\pi(S)}= \pi({L_{1}}^S) \times A$, where $A$ is an abelian group. Let us first check that \[{L_{1}}^{\pi(S)} \slash \pi({L_{1}}^S) \cong p(S \cap (\Gamma_{1} \times \mathbb{Z}^{m_{2}} \times \cdots \times \mathbb{Z}^{m_{n}})) \slash p(S \cap (\mathbb{Z}^{m_{1}} \times \cdots \times \mathbb{Z}^{m_{n}})),\] where $p$ is the projection homomorphism $p\colon \Gamma_{1}\times \cdots \times \Gamma_{n} \mapsto \Gamma_{2}\times \cdots \times \Gamma_{n}$. It is easy to show that the homomorphism \[f \colon {L_{1}}^{\pi(S)} \mapsto p(S \cap (\Gamma_{1} \times \mathbb{Z}^{m_{2}} \times \cdots \times \mathbb{Z}^{m_{n}})) \slash p(S \cap (\mathbb{Z}^{m_{1}} \times \cdots \times \mathbb{Z}^{m_{n}}))\] defined by \[f(s)=(z_{2},\dots,z_{n}) p(S \cap (\mathbb{Z}^{m_{1}} \times \cdots \times \mathbb{Z}^{m_{n}})) \quad \text{if} \quad (sz_{1},z_{2},\dots,z_{n})\in S,\]
is well-defined, surjective and $\text{ker }f= \pi({L_{1}}^S)$. Therefore, by the First Isomorphism Theorem, we obtain the desired isomorphism. Note also that the group \[p(S \cap (\Gamma_{1} \times \mathbb{Z}^{m_{2}} \times \cdots \times \mathbb{Z}^{m_{n}})) \slash p(S \cap (\mathbb{Z}^{m_{1}} \times \cdots \times \mathbb{Z}^{m_{n}}))\] is an abelian group.

Finally, the elements of $\pi({L_{1}}^S)$ commute with the elements of ${L_{1}}^{\pi(S)} \slash \pi({L_{1}}^S)$, so it turns out that ${L_{1}}^{\pi(S)}= \pi({L_{1}}^S) \times A$, where $A$ is an abelian group. In particular, if ${L_{1}}^S$ is finitely generated, ${L_{1}}^{\pi(S)}$ is finitely generated.

For the converse, suppose that ${L_{1}}^{\pi(S)}$ is finitely generated. Then, $\pi({L_{1}}^S)$ is finitely generated and by taking into account the short exact sequence 
\[ 
\begin{tikzcd}
1 \arrow{r}  & {L_{1}}^{S} \cap \mathbb{Z}^{m_{1}+\dots +m_{n}} \arrow{r} & L_{1}^S \arrow{r} & \pi({L_{1}}^S) \arrow{r} & 1,
\end{tikzcd} 
\]
we have that ${L_{1}}^S$ is finitely generated if and only if $\pi({L_{1}}^S)$ is.

Thus, we can work with the subgroup $S^\prime= \pi(S) <\Gamma_{1}^\prime\times \cdots \times \Gamma_{n}^\prime.$ As in Proposition~\ref{Proposition 2}, we can assume that the projection maps $p_{i} \colon S^\prime \mapsto \Gamma_{i}^\prime$ are surjective and that the subgroups $L_{i}^{S^\prime}= S^\prime \cap \Gamma_{i}^\prime$ are non-trivial.

For $1 \leq i \leq r$, the subgroup $L_{i}^{S^\prime}$ is non-trivial, finitely generated and normal in $\Gamma_{i}^\prime$, so by Theorem~\ref{Theorem Section 5}, it has finite index in $\Gamma_{i}^\prime$. Let us define $p_{r}$ to be the homomorphism $ \Gamma_{1}^\prime\times \cdots \times \Gamma_{n}^\prime \mapsto \Gamma_{1}^\prime \times \cdots \times \Gamma_{r}^\prime$.

Since $\Delta= {L_{1}}^{S^\prime}\times \cdots \times {L_{r}}^{S^\prime}$ has finite index in $\Gamma_{1}^\prime \times \cdots \times \Gamma_{r}^\prime$, the subgroup $\hat{S_{0}}= {p_{r}}^{-1}(\Delta) \cap S^\prime$ has finite index in $S^\prime$. In addition, $\hat{S_{0}}= \Delta \times \hat{S_{2}}$, where \[\hat{S_{2}}= S^\prime \cap (\Gamma_{r+1}^\prime \times \cdots \times \Gamma_{n}^\prime).\]
Finally, by Theorem~\ref{Theorem C}, $\hat{S_{2}}$ has a finite index subgroup $S_{2}$ with $H_{k}(S_{2};\mathbb{Q})$ infinite dimensional for some $k\leq n-r$, so the same holds for $S$.

\section{Finitely presented residually Droms RAAGs}
\label{Section 10}

The goal of this section is to understand finitely presented residually Droms RAAGs. As we discussed in Section~\ref{Subsection}, these are precisely coordinate groups over Droms RAAGs, and coordinate groups are just finitely generated subgroups of direct products of limit groups. Thus, we focus on finitely presented subgroups of direct products of limit groups over Droms RAAGs. This section is based on the earlier work \cite{Bridson3} of Bridson, Howie, Miller and Short.

Let us prove Theorem~\ref{Theorem Section 10}:

\begin{theorem}\label{Theorem Section 10}
Let $S$ be a finitely generated group that is residually a Droms RAAG. The following are equivalent:\\[3pt]
(1) $S$ is finitely presentable;\\[3pt]
(2) $S$ is of type $FP_{2}(\mathbb{Q})$;\\[3pt]
(3) $\dim H_{2}(S_{0};\mathbb{Q})$ is finite for all subgroups $S_{0}<S$ of finite index;\\[3pt]
(4) There exists a neat embedding $S \hookrightarrow \Gamma_{0}\times \cdots \times \Gamma_{n}$ into a product of limit groups over Droms RAAGs such that the image of $S$ under the projection to $\Gamma_{i}\times \Gamma_{j}$ has finite index for $0\leq i<j \leq n$;\\[3pt]
(5) For every neat embedding $S \hookrightarrow \Gamma_{0}\times \cdots \times \Gamma_{n}$ into a product of limit groups over Droms RAAGs the image of $S$ under the projection to $\Gamma_{i}\times \Gamma_{j}$ has finite index for $0\leq i<j \leq n$.
\end{theorem}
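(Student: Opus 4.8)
The plan is to prove the cycle of implications $(1)\Rightarrow(2)\Rightarrow(3)\Rightarrow(5)\Rightarrow(4)\Rightarrow(1)$. The implication $(1)\Rightarrow(2)$ is the standard fact that a finitely presented group is of type $FP_{2}$ over every commutative ring, in particular over $\mathbb{Q}$. For $(2)\Rightarrow(3)$ one recalls that $FP_{2}(\mathbb{Q})$ passes to finite index subgroups and that a group of type $FP_{2}(\mathbb{Q})$ has $H_{2}(\,\cdot\,;\mathbb{Q})$ finitely generated, hence finite dimensional; so if $S$ is of type $FP_{2}(\mathbb{Q})$ then $\dim H_{2}(S_{0};\mathbb{Q})<\infty$ for every finite index subgroup $S_{0}<S$.

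For $(3)\Rightarrow(5)$ I would fix an arbitrary neat embedding $S\hookrightarrow\Gamma_{0}\times\cdots\times\Gamma_{n}$ and treat the pairs in two groups. If one index is $0$, then since $S\cap\Gamma_{0}$ has finite index in $\Gamma_{0}$ the subgroup $N_{j,0}=p_{0}(\ker(p_{j}|_{S}))$ of $\Gamma_{0}$ contains $S\cap\Gamma_{0}$, hence has finite index in $\Gamma_{0}$; as the image $p_{0j}(S)$ surjects onto $\Gamma_{j}$ and contains $N_{j,0}\times 1$, it has finite index in $\Gamma_{0}\times\Gamma_{j}$, with no use of $(3)$. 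For a pair $1\leq i<j\leq n$ I would pass to a finite index subgroup and perform the reductions of Proposition~\ref{Proposition 2} (splitting off the abelian factor $\Gamma_{0}$ as in Section~\ref{Section 9}), which preserves hypothesis $(3)$; in the resulting situation the argument that proves Theorem~\ref{Theorem Section 6}, namely the analogues of \cite[Lemma~6.2, Proposition~6.3]{something1} in which the role of ``finite dimensional $H_{1}$ implies finitely generated'' is played by Property~\ref{Property (5)} and the finite-index conclusion comes from Theorem~\ref{Theorem Section 5}, shows that $N_{i,j}$ has finite index in $\Gamma_{j}$ for all $i\neq j$. Since $N_{j,i}\times N_{i,j}\subseteq p_{ij}(S)$ and both factors have finite index, $p_{ij}(S)$ has finite index in $\Gamma_{i}\times\Gamma_{j}$. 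As the neat embedding was arbitrary this is exactly $(5)$.

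The implication $(5)\Rightarrow(4)$ reduces to the existence of a neat embedding. By \cite{Baumslag} the group $S$ embeds in a direct product of limit groups over Droms RAAGs; replacing each factor by the image of $S$ makes the embedding subdirect, discarding factors met trivially makes it full, and using Property~\ref{Property (2)} together with Proposition~\ref{Proposition 1} one splits off the free abelian centres of all factors and gathers them into a single abelian factor, after which a change of basis in that $\mathbb{Z}^{N}$ arranges that the intersection of $S$ with it has finite index while the remaining factors have trivial centre. This is a neat embedding, and $(5)$ applied to it gives $(4)$.

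The main obstacle is $(4)\Rightarrow(1)$, which I would prove exactly along the lines of \cite{Bridson3}. After splitting off the abelian factor as above one is reduced to $S$ a finitely generated full subdirect product of limit groups over Droms RAAGs $\Gamma_{1},\dots,\Gamma_{n}$ with trivial centre that virtually surjects onto every pair of coordinates. One inducts on $n$: writing $q\colon S\to\Gamma_{1}\times\cdots\times\Gamma_{n-1}$ for the projection, its image $\overline{S}=q(S)$ is again such a subdirect product, finitely presented by induction, and $S$ is precisely the fibre product $\overline{S}\times_{Q}\Gamma_{n}$ over $Q=\overline{S}/q(K_{n})\cong\Gamma_{n}/L_{n}$, where $K_{n}=\ker(p_{n}|_{S})$. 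The asymmetric $1$--$2$--$3$ Theorem of Baumslag, Bridson, Miller and Short then gives that $S$ is finitely presented, provided that $\overline{S}$ and $\Gamma_{n}$ are finitely presented (induction and Property~\ref{Property (1)}), that the normal subgroup $q(K_{n})\triangleleft\overline{S}$ is finitely generated, and that $Q$ is of type $F_{3}$. The delicate point, imported from \cite{Bridson3}, is the simultaneous bookkeeping in the induction showing that all projection kernels of such a subdirect product are finitely generated; once $K_{n}$ is known to be finitely generated, Theorem~\ref{Theorem Section 5} forces $Q=\Gamma_{n}/L_{n}$ to be finite (a nontrivial finitely generated normal subgroup of a trivial-centre limit group over a Droms RAAG has finite index), hence of type $F_{\infty}$ a fortiori. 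Throughout, the facts special to Droms RAAGs that replace the corresponding statements about limit groups over free groups are Properties~\ref{Property (1)}, \ref{Property (2)} and~\ref{Property (6)} and Theorems~\ref{Theorem Section 5} and~\ref{Theorem C}; I expect essentially all of the genuine work to lie in this finite-generation step, the remainder being formal once Sections~\ref{Section 5}--\ref{Section 9} are in place.
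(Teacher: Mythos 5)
Your cycle of implications and the handling of $(1)\Rightarrow(2)$, $(2)\Rightarrow(3)$, $(3)\Rightarrow(5)$ (splitting off the pairs involving the abelian factor $\Gamma_0$, then passing to $q(S)<\Gamma_1\times\cdots\times\Gamma_n$ and invoking the Section~\ref{Section 6} machinery built on Property~\ref{Property (5)} and Theorem~\ref{Theorem Section 5}) and $(5)\Rightarrow(4)$ (existence of a neat embedding) all match the paper's proof. The only genuine divergence is $(4)\Rightarrow(1)$: the paper invokes \cite[Theorem E]{Bridson3} as a black box, whereas you reconstruct that result from the asymmetric $1$--$2$--$3$ theorem and an induction on the number of factors. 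Your sketch is the right skeleton — this is essentially how that theorem is proved in \cite{Bridson3} — and you correctly flag that the finite generation of the projection kernels is the heart of the matter and must be carried along in the induction. However, note a small slip in your deduction that $Q$ is finite: you write that ``once $K_n$ is known to be finitely generated, Theorem~\ref{Theorem Section 5} forces $Q=\Gamma_n/L_n$ to be finite,'' but $K_n=\ker(p_n|_S)$ is not $L_n=S\cap\Gamma_n$, and Theorem~\ref{Theorem Section 5} requires the latter to be non-trivial and finitely generated. The inductive bookkeeping therefore has to deliver finite generation of $L_n$ as well as of $K_n$. Packaged as a citation to Theorem E, as the paper does, this step costs nothing; your expansion is faithful but buys no simplification and introduces that extra care.
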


\begin{proof}
We first check that for every finitely generated coordinate group over a Droms RAAG, there is a neat embedding.

Clearly, there is an embedding $S \hookrightarrow \Gamma_{0} \times \cdots \times \Gamma_{n}$ where $\Gamma_{i}$ is a limit group over a Droms RAAG. Each group $\Gamma_{i}$ is of the form $\mathbb{Z}^{m_{i}}\times \Gamma_{i}^\prime$ with $m_{i}\in \mathbb{N}\cup \{0\}$ and $\Gamma_{i}^\prime$ is a limit group over a Droms RAAG with trivial center. Therefore, we may assume that $S$ is a subgroup of the direct product $\Gamma_{0}\times \cdots \times \Gamma_{n}$ where $\Gamma_{0}$ is a free abelian group and $\Gamma_{i}$ is a limit group over a Droms RAAG with trivial center for $i>0$.

By the basis extension property for free abelian groups, there is a decomposition of $\Gamma_{0}$ as a direct sum $M_{0} \oplus R_{0}$ where $L_{0}= S \cap \Gamma_{0}$ has finite index in $M_{0}$. Since the intersection $S \cap R_{0}$ is trivial, the projection homomorphism $f \colon M_{0} \oplus R_{0} \times \Gamma_{1}\times \cdots \times \Gamma_{n} \mapsto M_{0} \times \Gamma_{1}\times \cdots \times \Gamma_{n}$ descends to a monomorphism $f_{| S} \colon S \mapsto f(S)$. Thus, $S$ is isomorphic to a subgroup of $M_{0} \times \Gamma_{1} \times \cdots \times \Gamma_{n}$ and we may now assume that $L_{0}$ has finite index in $\Gamma_{0}$. Finally, we may also suppose that $S$ is a full subdirect product as we did in Proposition~\ref{Proposition 2}.

Let us denote each projection map $\Gamma_{0}\times \cdots \times \Gamma_{n} \mapsto \Gamma_{i} \times \Gamma_{j}$ by $p_{ij}$, $0\leq i<j \leq n$.

The implications $(1) \implies (2)$, $(2) \implies (3)$ are clear. Since neat embeddings exist, $(5) \implies (4)$. By \cite[Theorem E]{Bridson3}, (4) implies (1). Thus it suffices to prove that (3) implies (5).

Let us argue first that for any $i>0$, $p_{0i}(S)$ has finite index in $\Gamma_{0}\times \Gamma_{i}$. Since $\Gamma_{0}$ is free abelian and the projection map $p_{i} \colon S \mapsto \Gamma_{i}$ is surjective, $p_{0i}(S)$ is isomorphic to $(p_{0i}(S) \cap \Gamma_{0}) \times \Gamma_{i}$. The subgroup $L_{0}$ is contained in $p_{0i}(S)\cap \Gamma_{0}$, so $p_{0i}(S)$ has finite index in $\Gamma_{0}\times \Gamma_{i}$.

Second, let us show that for $1\leq i < j \leq n$, $p_{ij}(S)$ has finite index in $\Gamma_{i}\times \Gamma_{j}$. There is an exact sequence
\[ 
\begin{tikzcd}
1 \arrow{r}  &\Gamma_{0} \arrow{r} & \Gamma_{0}\times \cdots \times \Gamma_{n} \arrow{r}{q} & \Gamma_{1}\times \cdots \times \Gamma_{n} \arrow{r} & 1,
\end{tikzcd} 
\]
and this exact sequence descends to another exact sequence for $S$:
\[ 
\begin{tikzcd}
1 \arrow{r}  &L_{0} \arrow{r} & S \arrow{r} & q(S) \arrow{r} & 1.
\end{tikzcd} 
\]
Thus, $S \slash L_{0} \cong q(S) < \Gamma_{1} \times \cdots \times \Gamma_{n}$.

Let us take $\overline{S_{0}}$ a finite index subgroup in $S \slash L_{0}$. Then, $\overline{S_{0}}$ is of the form $S_{0}L_{0} \slash L_{0}$ with $S_{0}L_{0}$ of finite index in $S$. The group $L_{0}$ is finitely generated and by hypothesis $\dim H_{2}(S_{0}L_{0}; \mathbb{Q})$ is finite.

Hence, $H_{2}(\overline{S_{0}}; \mathbb{Q})$ is finite dimensional for all finite index subgroups $\overline{S_{0}}$ in $S \slash L_{0}$. By Section~\ref{Section 6}, $N_{i,j}$ has finite index in $\Gamma_{j}$. But $N_{j,i}\times N_{i,j} < p_{ij}(S)$, so $p_{ij}(S)$ has finite index in $\Gamma_{i}\times \Gamma_{j}$.
\end{proof}

We now want to prove Theorem~\ref{Theorem end}. By Theorem~\ref{Theorem Section 10}, any subgroup of the direct product of limit groups over Droms RAAGs that contains a finitely presentable full subdirect product is again finitely presentable. Theorem~\ref{Theorem end} generalizes this.

\begin{theorem}\label{Theorem end}
Let $\Gamma_{1}\times \cdots \times \Gamma_{k}$ be the direct product of limit groups over Droms RAAGs, where $\Gamma_{1}$ is free abelian and $\Gamma_{i}$ is a limit group over a Droms RAAG with trivial center, for $i\in \{2,\dots,k\}$. Let $n\in \mathbb{N} \setminus \{1\}$, $S < \Gamma_{1}\times \cdots \times \Gamma_{k}$ be a full subdirect product and let $T < \Gamma_{1}\times \cdots \times \Gamma_{k}$ be a subgroup that contains $S$. If $S$ is of type $FP_{n}(\mathbb{Q})$, then so is $T$.
\end{theorem}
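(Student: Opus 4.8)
The plan is to deduce Theorem~\ref{Theorem end} from the dichotomy of Theorem~\ref{Theorem C} by running that theorem's proof for $T$ in place of $S$, importing the hypothesis on $S$ through the inclusion $S\le T$. First I would perform the usual reductions: factoring out the free abelian direct factors of the $\Gamma_i$ — exactly as in Section~\ref{Section 9} and in the first part of the proof of Theorem~\ref{Theorem Section 10}, which is carried out by applying a single homomorphism of the ambient product and hence respects $S\le T$, the full‑subdirect property, and finite presentability — one reduces to the case in which every $\Gamma_i$ has trivial center (and $T$ is still a full subdirect product, since it contains one). If $T$ has finite index in $\Gamma=\Gamma_1\times\cdots\times\Gamma_k$ then $T$ is of type $F_\infty$ by Property~\ref{Property (6)} and there is nothing to prove, so assume henceforth that $T$, and therefore $S$, has infinite index in $\Gamma$.

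Next I would observe that $T$ is automatically of type $FP_2(\mathbb{Q})$: since $S$ is of type $FP_2(\mathbb{Q})$, Theorem~\ref{Theorem Section 10} applied to $S$ gives that the projection of $S$ to each $\Gamma_i\times\Gamma_j$ has finite index, hence so does the projection of $T$; the embedding of $T$ is neat, so \cite[Theorem E]{Bridson3} shows $T$ is finitely presented. In particular $T$ is finitely generated, and Theorem~\ref{Theorem Section 6} applies to both $S$ and $T$, so $\Gamma_i/(S\cap\Gamma_i)$ and $\Gamma_i/(T\cap\Gamma_i)$ are virtually nilpotent for every $i$; write $L^S=\prod_i(S\cap\Gamma_i)\le L^T=\prod_i(T\cap\Gamma_i)$.

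Now I would run the proof of Theorem~\ref{Theorem C} with $T$ in the role of $S$. Applying Lemma~\ref{Nilpotent 1} to the virtually nilpotent group $\Gamma/L^T$ with the infinite‑index subgroup $T/L^T$ yields a finite index subgroup $\Lambda<\Gamma$ containing $L^T$ and an epimorphism $g\colon\Lambda\to\mathbb{Z}$ with $T\cap\Lambda\subseteq\ker g$; after replacing the $\Gamma_i$, $S$ and $T$ by appropriate finite index subgroups, exactly as in the proofs of Theorems~\ref{Theorem Section 7} and~\ref{Theorem C}, we may assume $L^S\le S\le T\subseteq N:=\ker g$ with $g\colon\Gamma\to\mathbb{Z}$ an epimorphism. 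By Theorem~\ref{Theorem Section 7} some finite index subgroup $N_0<N$ has $H_m(N_0;\mathbb{Q})$ infinite dimensional, where — by inspection of the proof of Theorem~\ref{Theorem Section 7} — $m$ may be taken to be the number of factors on which $g$ restricts non‑trivially. The key point is that, because $g$ is trivial on $T\cap\Lambda$ and $S\le T$, it is a fortiori trivial on $S\cap\Lambda$ and on $L^S$; thus $g$ is precisely an epimorphism of the type arising in the proof of Theorem~\ref{Theorem C} for $S$, and since $S$ is of type $FP_n(\mathbb{Q})$ the argument there — building, via Corollary~\ref{Nilpotent 2}, a subnormal chain with finite or cyclic quotients from a finite index subgroup of $S$ up to $N_0$, and propagating finite dimensionality of $H_j(\,\cdot\,;\mathbb{Q})$ along it with the spectral‑sequence lemma \cite[Lemma 8.3]{something1} — forces $m>n$. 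Consequently Theorem~\ref{Theorem C}'s machinery, applied to $T$, can only ever witness infinite‑dimensional rational homology in degrees $>n$; combined with the same chain‑and‑lemma propagation this yields that $H_j(T_0;\mathbb{Q})$ is finite dimensional for every $j\le n$ and every finite index subgroup $T_0<T$, whence, by an $FP_n$ analogue of the implication $(3)\Rightarrow(1)$ of Theorem~\ref{Theorem Section 10}, $T$ is of type $FP_n(\mathbb{Q})$.

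The hard part is the final implication: upgrading "$H_j(T_0;\mathbb{Q})$ finite dimensional for all $j\le n$ and all finite index $T_0<T$" to "$T$ is of type $FP_n(\mathbb{Q})$", together with the companion assertion that Theorem~\ref{Theorem C}'s argument really detects \emph{every} low‑degree homological obstruction of finite index subgroups of $T$ rather than merely producing one via a single epimorphism to $\mathbb{Z}$. For $n=2$ both statements are contained in Theorem~\ref{Theorem Section 10}, resting on the asymmetric $1$--$2$--$3$ theorem \cite[Theorem E]{Bridson3}; for general $n$ one needs the higher analogue — essentially a virtual‑surjection‑to‑$n$‑tuples criterion for type $FP_n$ — and the real work is to check that the requisite higher asymmetric finiteness machinery applies to subdirect products of limit groups over Droms RAAGs, using that these are of type $F_\infty$ (Property~\ref{Property (6)}) and the structural Proposition~\ref{Proposition 1}. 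The remaining difficulties are purely organisational: verifying that the several finite index reductions (factoring out abelian factors, passing to $p_i(\Lambda)$‑type subgroups, intersecting $S$ and $T$ with $\Lambda$) can be carried through simultaneously for $S$ and $T$ while preserving $S\le T$, the full‑subdirect property, and trivial centers.
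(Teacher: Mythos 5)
There is a genuine gap in your argument, and you flag it yourself: you reduce the problem to the claim that $H_j(T_0;\mathbb{Q})$ is finite dimensional for all $j\le n$ and all finite index $T_0<T$, and then need an ``$FP_n$ analogue of $(3)\Rightarrow(1)$ of Theorem~\ref{Theorem Section 10}'' — that is, a higher asymmetric $1$--$2$--$3$ criterion — which is not available in the paper and whose proof you do not supply. Moreover, the intermediate step that ``running Theorem~\ref{Theorem C}'s machinery for $S$ forces $m>n$'' does not hold as stated: Theorem~\ref{Theorem Section 7} produces an infinite-dimensional $H_k(N_0;\mathbb{Q})$ for \emph{some} $k\le$ (number of factors), and since $S$ has infinite index there is no contradiction to avoid; the epimorphism to $\mathbb{Z}$ does not single out a degree above $n$. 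So your route through Theorem~\ref{Theorem C}, Theorem~\ref{Theorem Section 7} and a hypothetical homology-to-$FP_n$ upgrade does not close.

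The paper's proof avoids this entirely by a much more elementary device. Since $\Gamma_1/L_1$ is abelian and (by $FP_2(\mathbb{Q})$ for $S$ and Theorem~\ref{Theorem Section 6}) each $\Gamma_i/L_i$ for $i\ge 2$ is virtually nilpotent, $D/L$ is a finitely generated virtually nilpotent group, where $D=\Gamma_1\times\cdots\times\Gamma_k$ and $L=\prod_i(S\cap\Gamma_i)$. Hence $T/L$ is a subgroup of a polycyclic-by-finite group, in particular finitely generated and virtually nilpotent. Applying Corollary~\ref{Nilpotent 2} to the pair $S/L\le T/L$ yields a finite index subgroup $S_0<S$ (containing $L$) and a subnormal chain $S_0\triangleleft S_1\triangleleft\cdots\triangleleft S_l=T$ with each $S_{i+1}/S_i$ finite or infinite cyclic. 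Since $FP_n(\mathbb{Q})$ passes to finite index subgroups, $S_0$ is $FP_n(\mathbb{Q})$; and since finite and infinite cyclic groups are of type $FP_n(\mathbb{Q})$, the standard closure of $FP_n$ under extensions by $FP_n$ quotients carries the property up the chain to $T$. Thus the chain goes directly from $S$ to $T$, not to the kernel of an auxiliary epimorphism to $\mathbb{Z}$, and one never needs to recover $FP_n$ from homology finiteness. You should replace the Theorem~\ref{Theorem C}/Theorem~\ref{Theorem Section 7} detour by this direct chain argument.

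Two smaller remarks: the reduction to trivial centers is already built into the hypotheses of Theorem~\ref{Theorem end}, so that step is unnecessary; and your observation that $T$ is automatically finitely presented via Theorem~\ref{Theorem Section 10} and \cite[Theorem E]{Bridson3}, while true and potentially useful elsewhere, is not needed for the paper's argument.
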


\begin{proof}

The quotient group $\Gamma_{1} \slash L_{1}$ is abelian, so in particular, nilpotent.

Moreover, if $S$ is of type $FP_{n}(\mathbb{Q})$ for $n\geq 2$, it is in particular of type $FP_{2}(\mathbb{Q})$. Then, by Theorem~\ref{Theorem Section 6}, $\Gamma_{i} /L_{i}$ is virtually nilpotent for $i\in \{2,\dots,k\}$. Thus, $D \slash L$ is virtually nilpotent, where $D=\Gamma_{1}\times \cdots \times \Gamma_{k}$ and $L= (S\cap \Gamma_{1})\times \cdots \times (S\cap \Gamma_{k})$.

By Corollary~\ref{Nilpotent 2}, there is a finite index subgroup $S_{0}< S$ and a subnormal chain \[S_{0}\triangleleft S_{1}\triangleleft \dots \triangleleft S_{l}=T,\] such that each quotient $S_{i+1}/S_{i}$ is either finite or infinite cyclic.

Since $S$ is of type $FP_{n}(\mathbb{Q})$ and $S_{0}$ has finite index in $S$, $S_{0}$ is also of type $FP_{n}(\mathbb{Q})$. Note that there is a short exact sequence \[ 1 \rightarrow S_{0} \rightarrow S_{1} \rightarrow S_{1}/S_{0} \rightarrow 1.\] Moreover, $S_{1}/S_{0}$ is of type $FP_{n}(\mathbb{Q})$ because it is infinite cyclic or a finite group. Therefore, $S_{1}$ is of type $FP_{n}(\mathbb{Q})$. By iterating this argument, we obtain that $T$ is of type $FP_{n}(\mathbb{Q})$.
\end{proof}

Finally, we focus on algorithmic problems. Let us start with the multiple conjugacy problem for finitely presented residually Droms RAAGs. 

The \emph{multiple conjugacy problem} for a finitely generated group $G$ asks if there is an algorithm that, given an integer $l$ and two $l$-tuples of elements of $G$, say $x=(x_{1},\dots,x_{l})$ and $y=(y_{1},\dots,y_{l})$, can determine if there exists $g\in G$ such that $g x_{i} g^{-1}= y_{i}$ in $G$, for $i\in \{1,\dots,l\}$.

When solving the multiple conjugacy problem for finitely presented residually free groups (see \cite{Bridson3}), the authors first show a result for bicombable groups. Fundamental groups of compact non-positively curved spaces are bicombable groups, so in particular, limit groups over Droms RAAGs (see \cite[Corollary 9.5]{Montse}).

\begin{proposition}{\cite[Proposition 7.5]{Bridson3}}
Let $\Gamma$ be a bicombable group, let $H < \Gamma$ be a subgroup, and suppose that there exists a subgroup $L<H$ normal in $\Gamma$ such that $\Gamma \slash L$ is nilpotent. Then $H$ has a solvable multiple conjugacy problem.
\end{proposition}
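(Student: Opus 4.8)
The plan is to solve the multiple conjugacy problem for $H$ by first solving it in the bicombable group $\Gamma$, where the set of conjugators of one tuple by another is a computable coset of a computable centraliser, and then to decide whether that coset meets $H$ by pushing the whole question into the nilpotent quotient $\Gamma/L$, where it becomes an instance of a decidable problem about polycyclic groups.

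First I would record what bicombability gives at the level of $\Gamma$. For a bicombable group the conjugacy problem is solvable, centralisers of elements are finitely generated, and a finite generating set for the centraliser of any given element can be computed; iterating over the entries of a tuple one obtains an algorithm which, given $l$-tuples $x=(x_{1},\dots,x_{l})$ and $y=(y_{1},\dots,y_{l})$ in $\Gamma$, decides whether some $g_{0}\in\Gamma$ satisfies $g_{0}x_{i}g_{0}^{-1}=y_{i}$ for all $i$ and, if so, returns such a $g_{0}$ together with a finite generating set for the simultaneous centraliser $Z:=C_{\Gamma}(x_{1},\dots,x_{l})=\bigcap_{i}C_{\Gamma}(x_{i})$. (In the iteration one passes to centralisers of already-handled entries, which are quasiconvex and hence again bicombable, so the procedure stays inside the class.) Given tuples $x,y$ in $H$ (given through a finite generating set of $H$, as in all the applications), run this on them as tuples in $\Gamma$. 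If there is no conjugator in $\Gamma$ there is none in $H$, so output ``no''; otherwise the set of all conjugators in $\Gamma$ is exactly the coset $g_{0}Z$, and the remaining task is to decide whether $g_{0}Z\cap H\neq\varnothing$.

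Now I would use the hypotheses on $L$. Since $L$ is normal in $\Gamma$ and contained in $H$, the subgroup $H$ is the full preimage of $\bar H:=H/L$ under the projection $\pi\colon\Gamma\to Q:=\Gamma/L$; in particular $g\in H$ if and only if $\pi(g)\in\bar H$. Hence
\[ g_{0}Z\cap H\neq\varnothing \iff \pi(g_{0})\,\pi(Z)\cap\bar H\neq\varnothing \iff \pi(g_{0})\in\bar H\,\pi(Z), \]
the last two conditions taking place in $Q$, where $\pi(Z)$ is a subgroup. All the data on the right are computable: from a finite presentation of $\Gamma$ (bicombable groups are finitely presented) and a description of $L$ one obtains a finite presentation of $Q$ and the map $\pi$, and then $\pi(g_{0})$, a finite generating set of $\bar H$ (images of generators of $H$), and a finite generating set of $\pi(Z)$ (images of the generators of $Z$ found above) are all computable. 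Finally, $Q$ is a finitely generated nilpotent group, hence polycyclic, and for polycyclic groups the coset-intersection problem --- equivalently, deciding whether a given element lies in the product of two finitely generated subgroups --- is solvable by the standard algorithmic theory of polycyclic groups. Applying this to $\bar H$, $\pi(Z)$ and $\pi(g_{0})$ decides whether $g_{0}Z\cap H\neq\varnothing$, hence whether $x$ and $y$ are conjugate in $H$.

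I expect the main obstacle to be the first step: verifying that bicombability yields a solvable multiple conjugacy problem in $\Gamma$ together with finitely generated, effectively computable simultaneous centralisers. For a single element this is the standard consequence of the two-sided fellow-traveller property, but organising the iteration over a tuple --- keeping track of conjugators modulo previously obtained centralisers, and checking that these auxiliary subgroups remain in a class for which the argument can be repeated --- is the genuinely technical point, and it is the only place where the combing hypothesis is used. Everything downstream is formal: the passage to $Q$ uses only that $L$ is normal in $\Gamma$ and contained in $H$, and termination of the algorithm uses only that $Q$ is polycyclic.
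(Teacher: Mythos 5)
Your argument is correct and follows essentially the same route as the cited proof in \cite{Bridson3}: use bicombability (i.e.\ semihyperbolicity in the sense of Alonso--Bridson) to decide multiple conjugacy in $\Gamma$ and to compute a finite generating set for the simultaneous centraliser $Z$, observe that the set of $H$-conjugators is $g_{0}Z\cap H$, and then decide this coset-intersection question in the finitely generated nilpotent (hence polycyclic) quotient $\Gamma/L$, using that $L\triangleleft\Gamma$ with $L<H$ makes $H$ the full preimage of $H/L$.
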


Second, they state a result that relates the decidability of the multiple conjugacy problem for a finite index subgroup and the whole group:

\begin{lemma}{\cite[Lemma 7.2]{Bridson3}}
Suppose $G$ is a group in which roots are unique and $H<G$ is a subgroup of finite index. If the multiple conjugacy problem for $H$ is solvable, then the multiple conjugacy problem for $G$ is solvable.
\end{lemma}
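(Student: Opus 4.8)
The plan is to use uniqueness of roots in $G$ to reduce a single instance of the multiple conjugacy problem in $G$ to finitely many instances in $H$, after replacing each element by a power that is forced to lie in $H$.

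Let $m=[G:H]$ and fix a right transversal $g_{1}=1,g_{2},\dots,g_{m}$ for $H$ in $G$, so $G=\bigsqcup_{j}Hg_{j}$. Let $N=\bigcap_{g\in G}gHg^{-1}$ be the normal core of $H$, so $N\triangleleft G$ and $G/N$ embeds in a finite symmetric group; hence every element of $G/N$ has order dividing $m!$, and therefore $w^{m!}\in N\subseteq H$ for every $w\in G$. Given two $l$-tuples $x=(x_{1},\dots,x_{l})$ and $y=(y_{1},\dots,y_{l})$ of elements of $G$, I would first record the key equivalence: $x$ and $y$ are conjugate in $G$ if and only if the tuples $x^{m!}=(x_{1}^{m!},\dots,x_{l}^{m!})$ and $y^{m!}=(y_{1}^{m!},\dots,y_{l}^{m!})$ are conjugate in $G$. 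The forward implication is immediate; for the converse, if $g x_{i}^{m!}g^{-1}=y_{i}^{m!}$ for all $i$, then $(g x_{i}g^{-1})^{m!}=y_{i}^{m!}=(y_{i})^{m!}$, so $g x_{i}g^{-1}$ and $y_{i}$ are two $m!$-th roots of the same element of $G$, and uniqueness of roots forces $g x_{i}g^{-1}=y_{i}$.

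Next I would translate conjugacy of $x^{m!}$ and $y^{m!}$ in $G$ into a statement about $H$. Writing a prospective conjugator as $g=hg_{j}$ with $h\in H$ and $j\in\{1,\dots,m\}$, the equations $g x_{i}^{m!}g^{-1}=y_{i}^{m!}$ become $h\,(g_{j}x_{i}^{m!}g_{j}^{-1})\,h^{-1}=y_{i}^{m!}$ for all $i$. Since $x_{i}^{m!}\in N$ and $N$ is normal, each $g_{j}x_{i}^{m!}g_{j}^{-1}$ lies in $N\subseteq H$, and likewise $y_{i}^{m!}\in N\subseteq H$; so for each fixed $j$ this is exactly an instance of the multiple conjugacy problem for $H$. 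The algorithm for $G$ therefore computes $m$, a transversal, and the elements $g_{j}x_{i}^{m!}g_{j}^{-1}$ and $y_{i}^{m!}$ as words in a finite generating set of $H$ (via a coset enumeration / Reidemeister--Schreier rewriting), then for each $j=1,\dots,m$ runs the given $H$-algorithm on the pair of $l$-tuples $\big(g_{j}x_{1}^{m!}g_{j}^{-1},\dots,g_{j}x_{l}^{m!}g_{j}^{-1}\big)$ and $\big(y_{1}^{m!},\dots,y_{l}^{m!}\big)$, and returns a positive answer precisely when some run does. By the equivalences above this decides whether $x$ and $y$ are conjugate in $G$; moreover, if a run succeeds with witness $h\in H$ then $hg_{j}$ conjugates $x$ to $y$, so a conjugator is produced as well.

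The argument contains no serious obstacle; the single genuinely important point is the passage from $m!$-th powers back to the original elements, which is where, and only where, the hypothesis that roots are unique in $G$ is used — without it the reduction collapses. The remaining content is routine effectivity bookkeeping: computing coset representatives and rewriting elements of $N$, given as words in the generators of $G$, as words in the generators of the finite-index subgroup $H$, both of which are standard once $G$ is taken to be finitely generated and $H$ is presented concretely.
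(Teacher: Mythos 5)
Your proof is correct, and it takes what is essentially the standard route (the same one used in the cited \cite[Lemma 7.2]{Bridson3}, which the paper itself does not reprove but simply invokes). The key observation — that raising to the $m!$-th power forces every element into the normal core $N=\bigcap_{g\in G}gHg^{-1}\subseteq H$, and that uniqueness of roots in $G$ lets you pass back from $g x_i^{m!}g^{-1}=y_i^{m!}$ to $gx_ig^{-1}=y_i$ — is exactly the crux, and you identify it and use it correctly. The reduction to $m$ instances of the multiple conjugacy problem in $H$ via the coset decomposition $g=hg_j$ is sound, and it is genuinely important (as you note) that $g_jx_i^{m!}g_j^{-1}\in N$ because $N$ is normal, so the inputs you hand to the $H$-algorithm really do lie in $H$. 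The only caveat, which you flag appropriately, is the effectivity bookkeeping: one needs $G$ finitely generated and $H$ described concretely enough to carry out Reidemeister--Schreier rewriting and to compute the transversal and $m!$, but this is standard in the setting where the lemma is applied.
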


In order to apply this lemma, we need to check that limit groups over Droms RAAGs have unique roots. Recall that a group $G$ is said to have \emph{unique roots} if for all $x,y\in G$ and $n\neq 0$, one has $x=y \iff x^n=y^n$. In \cite{Duchamp} it is shown that RAAGs have unique roots. Hence it suffices to show that if $H$ has unique roots and $G$ is fully residually $H$, then $G$ has unique roots:

\begin{lemma}
Let $H$ be a group in which roots are unique and let $G$ be fully residually $H$. Then $G$ has unique roots.
\end{lemma}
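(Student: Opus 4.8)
The plan is to transfer uniqueness of roots from $H$ to $G$ by separating a single element with a homomorphism, using that $G$ is (fully) residually $H$. First I would dispose of the trivial implication: if $x=y$ then obviously $x^n=y^n$, so the content is the converse. Thus it suffices to show: if $x,y\in G$ and $n\neq 0$ satisfy $x^n=y^n$, then $x=y$. A quick reduction handles the sign of $n$: if $n<0$, taking inverses of both sides of $x^n=y^n$ yields $x^{-n}=y^{-n}$ with $-n>0$, so we may assume $n>0$.

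The main step is a proof by contradiction. Suppose $x\neq y$; then $x^{-1}y\neq 1$. Since $G$ is fully residually $H$ (applied to the one-element set $\{x^{-1}y\}$ — only residual $H$-ness is actually needed), there is a homomorphism $\varphi\colon G\mapsto H$ with $\varphi(x^{-1}y)\neq 1$, that is, $\varphi(x)\neq\varphi(y)$. On the other hand, $\varphi$ is a homomorphism, so $\varphi(x)^n=\varphi(x^n)=\varphi(y^n)=\varphi(y)^n$ in $H$. As $H$ has unique roots and $n\neq 0$, this forces $\varphi(x)=\varphi(y)$, contradicting the choice of $\varphi$. Hence $x=y$, which completes the proof that roots are unique in $G$.

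There is no substantial obstacle here: the only points requiring a moment's care are the reduction from negative to positive exponents and the observation that a homomorphism to $H$ which is non-trivial on $x^{-1}y$ is precisely one with $\varphi(x)\neq\varphi(y)$; everything else is immediate from the hypothesis that $H$ has unique roots.
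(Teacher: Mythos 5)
Your argument is exactly the paper's: pick a homomorphism $\varphi\colon G\to H$ non-trivial on $x^{-1}y$ (equivalently with $\varphi(x)\neq\varphi(y)$), observe $\varphi(x)^n=\varphi(y)^n$, and contradict unique roots in $H$. Your extra remarks (the trivial direction, the reduction to $n>0$, and that residual $H$-ness suffices) are correct but the core is identical to the paper's proof.
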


\begin{proof}
Suppose that there are two elements $x\neq y\in G$ such that $x^n=y^n$. Since $G$ is fully residually $H$, there is a homomorphism $\phi \colon G \mapsto H$ such that $\phi(x)\neq \phi(y)$. However, $\phi(x)^n=\phi(y)^n$ and this contradicts the fact that $H$ has unique roots.
\end{proof}

Building on the previous results and Theorem~\ref{Theorem Section 6}, the proof of Theorem~\ref{Theorem Conjugacy} is the same as the proof of \cite[Theorem 7.4]{Bridson3}:

\begin{theorem}\label{Theorem Conjugacy}
The multiple conjugacy problem is solvable in every finitely presented group that is residually a Droms RAAG.
\end{theorem}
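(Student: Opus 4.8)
The plan is to reproduce the proof of \cite[Theorem 7.4]{Bridson3}, substituting the Droms-RAAG analogues of the ingredients used there; all of these analogues are available above. Let $S$ be a finitely presented group that is residually a Droms RAAG. By \cite{Baumslag}, $S$ embeds as a finitely generated subgroup of a direct product $\Gamma_{1}\times\cdots\times\Gamma_{n}$ of limit groups over Droms RAAGs. Performing the reductions of Proposition~\ref{Proposition 2} — replacing each $\Gamma_{i}$ by $p_{i}(S)$, which stays a limit group over a Droms RAAG by Property~\ref{Property (2)}; discarding any factor $\Gamma_{i}$ with $S\cap\Gamma_{i}=1$; splitting off the abelian part $\mathbb{Z}^{m_{i}}$ of each $\Gamma_{i}$ using Proposition~\ref{Proposition 1}, exactly as in the proof of Theorem~\ref{Theorem Section 10}; and passing to a finite-index subgroup so that the HNN splittings of Proposition~\ref{Proposition 2}(5) are available — we are reduced to the following situation: a finite-index subgroup $S'\leq S$ is a full subdirect product in $\Gamma=\Gamma_{1}\times\cdots\times\Gamma_{n}$, where each $\Gamma_{i}$ is either free abelian or a centreless limit group over a Droms RAAG satisfying the hypotheses of Proposition~\ref{Proposition 2}. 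Set $L_{i}=S'\cap\Gamma_{i}$ and $L=L_{1}\times\cdots\times L_{n}$; since $S'$ is subdirect each $L_{i}$ is normal in $\Gamma_{i}$, so $L\trianglelefteq\Gamma$, and since $L_{i}\subseteq S'$ we get $L\subseteq S'$.

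Next I would produce the nilpotent quotient. Being finitely presented, $S'$ is of type $FP_{2}(\mathbb{Q})$, so Theorem~\ref{Theorem Section 6} gives that $\Gamma_{i}\slash L_{i}$ is virtually nilpotent for each centreless factor, while for each free abelian factor $\Gamma_{i}\slash L_{i}$ is abelian, hence nilpotent; thus $\Gamma\slash L=\prod_{i}\Gamma_{i}\slash L_{i}$ is virtually nilpotent. Let $\Gamma_{0}\leq\Gamma$ be the preimage of a finite-index nilpotent subgroup of $\Gamma\slash L$: then $\Gamma_{0}$ has finite index in $\Gamma$, contains $L$, and $\Gamma_{0}\slash L$ is nilpotent. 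Put $S_{0}=S'\cap\Gamma_{0}$, a finite-index subgroup of $S'$ (hence of $S$) with $L\subseteq S_{0}$.

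Finally I would invoke bicombability and unique roots. By \cite[Corollary 9.5]{Montse} limit groups over Droms RAAGs are fundamental groups of compact non-positively curved spaces, hence bicombable, and bicombability is inherited by direct products and by finite-index subgroups, so $\Gamma_{0}$ is bicombable. Applying \cite[Proposition 7.5]{Bridson3} with ambient group $\Gamma_{0}$, subgroup $S_{0}$, and the normal subgroup $L\trianglelefteq\Gamma_{0}$ — for which $L\subseteq S_{0}$ and $\Gamma_{0}\slash L$ is nilpotent — we conclude that $S_{0}$ has solvable multiple conjugacy problem. It remains to lift this to $S$. RAAGs have unique roots by \cite{Duchamp}, this property passes to groups that are fully residually a RAAG by the preceding lemma, and unique roots are clearly inherited by direct products and by subgroups; hence $S\leq\Gamma_{1}\times\cdots\times\Gamma_{n}$ has unique roots. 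Since $S_{0}$ has finite index in $S$, \cite[Lemma 7.2]{Bridson3} now gives that the multiple conjugacy problem is solvable in $S$. The argument introduces no genuinely new difficulty: the work is entirely in checking that the hypotheses of \cite[Theorem 7.4]{Bridson3} transfer — bicombability of the factors via \cite[Corollary 9.5]{Montse}, a normal subgroup with virtually nilpotent quotient via Theorem~\ref{Theorem Section 6}, and unique roots via \cite{Duchamp} together with the preceding lemma — and the only point requiring a little care is the passage from virtual nilpotence to genuine nilpotence of the quotient, which forces the descent to the finite-index subgroup $\Gamma_{0}$ before \cite[Proposition 7.5]{Bridson3} can be applied.
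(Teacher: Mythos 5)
Your proposal is correct and follows exactly the route the paper intends: the paper's "proof" is just the assertion that Bridson--Howie--Miller--Short's argument for \cite[Theorem 7.4]{Bridson3} goes through once one has bicombability of the factors (via \cite[Corollary 9.5]{Montse}), a normal subgroup with virtually nilpotent quotient (Theorem~\ref{Theorem Section 6}), and unique roots (via \cite{Duchamp} and the preceding lemma), and you have fleshed out precisely that chain of reductions. In particular you correctly isolate the one nontrivial point the paper glosses over — passing to a finite-index $\Gamma_{0}$ so that $\Gamma_{0}/L$ is genuinely (not just virtually) nilpotent before invoking \cite[Proposition 7.5]{Bridson3}, and then recovering $S$ from $S_{0}$ via \cite[Lemma 7.2]{Bridson3} and unique roots.
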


We conclude the paper by pointing out that the results that we have established for limit groups over Droms RAAGs allow us to apply the arguments of \cite{Separability} to prove the following results:

\begin{theorem}\label{Theorem Retract}
If $G$ is a finitely generated group that is residually a Droms RAAG and $H < G$ is a subgroup of type $FP_{\infty}(\mathbb{Q})$, then $H$ is a virtual retract of $G$.
\end{theorem}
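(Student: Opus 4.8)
The plan is to transplant the argument of Bridson and Wilton from \cite{Separability}, checking that each property of limit groups over free groups invoked there has a counterpart for limit groups over Droms RAAGs among the facts already assembled (Properties~\ref{Property (2)},~\ref{Property (4)} and~\ref{Property (6)}, together with Theorem~\ref{Theorem C} and the reductions of Proposition~\ref{Proposition 2}). First I would fix the ambient geometry. Since $G$ is finitely generated and residually a Droms RAAG, by \cite{Baumslag} and the normalisation carried out at the start of the proof of Theorem~\ref{Theorem Section 10} we may assume that $G\hookrightarrow \Gamma_{0}\times\Gamma_{1}\times\cdots\times\Gamma_{n}$ is a neat full subdirect product, with $\Gamma_{0}$ free abelian, $G\cap\Gamma_{0}$ of finite index in $\Gamma_{0}$, and each $\Gamma_{i}$ $(i\geq 1)$ a limit group over a Droms RAAG with trivial centre. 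Being of type $FP_{\infty}(\mathbb{Q})$, the subgroup $H$ is in particular finitely generated, so each projection $H_{i}:=p_{i}(H)\leq\Gamma_{i}$ is finitely generated, hence by Property~\ref{Property (2)} a limit group over a Droms RAAG when $i\geq 1$, and a finitely generated free abelian group when $i=0$.

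The local input, the analogue for Droms RAAGs of Wilton's theorem that finitely generated subgroups of limit groups over free groups are virtual retracts, is that \emph{$H_{i}$ is a virtual retract of $\Gamma_{i}$ for every $i$}. For $i=0$ this is elementary, since a finitely generated subgroup of $\mathbb{Z}^{m}$ lies with finite index in a direct summand. For $i\geq 1$ one argues by induction along the hierarchy of Proposition~\ref{Proposition 1} (equivalently along the class $\mathcal{C}$): split off the central free abelian factor and then use the acylindrical graph-of-groups decomposition over cyclic edge groups, together with the separability of the vertex groups and the combination theorem \cite[Theorem 3.6]{Burillo}, exactly as in the proof of Property~\ref{Property (4)} but now tracking retractions rather than merely separating a point from a subgroup. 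Granting this, choose for each $i$ a finite-index subgroup $\Gamma_{i}^{\prime}\leq\Gamma_{i}$ with $H_{i}\leq\Gamma_{i}^{\prime}$ and a retraction $r_{i}\colon\Gamma_{i}^{\prime}\to H_{i}$, and put $G^{\prime}:=G\cap(\Gamma_{0}^{\prime}\times\cdots\times\Gamma_{n}^{\prime})$. Then $G^{\prime}$ has finite index in $G$ and contains $H$ (because $p_{i}(H)=H_{i}\leq\Gamma_{i}^{\prime}$). Moreover, since $H_{i}\leq p_{i}(G^{\prime})\leq\Gamma_{i}^{\prime}$, the retraction $r_{i}$ sends $p_{i}(G^{\prime})$ onto $H_{i}$; hence $r:=(r_{i}\circ p_{i})_{i}\colon G^{\prime}\to \prod_{i}H_{i}$ is a homomorphism onto a subdirect product of $\prod_{i}H_{i}$ whose restriction to $H$ is the canonical embedding of $H$ into $\prod_{i}H_{i}$.

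Finally I would cut $G^{\prime}$ down so that the image of $r$ is precisely that of $H$. After discarding the indices $i$ for which $H\cap\Gamma_{i}$ is trivial (so that, relabelling, $H$ is a \emph{full} subdirect product of $P:=\prod_{i}H_{i}$), splitting off the central free abelian direct factors of $P$, and applying the reductions of Proposition~\ref{Proposition 2}, the image $H^{\flat}$ of $H$ becomes a full subdirect product of type $FP_{\infty}(\mathbb{Q})$ in a product of limit groups over Droms RAAGs with trivial centre; since type $FP_{\infty}(\mathbb{Q})$ passes to finite-index subgroups and forces every rational homology group to be finite dimensional, alternative~(2) of Theorem~\ref{Theorem C} cannot occur, so $H^{\flat}$ has finite index in $P$. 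Consequently $[\,r(G^{\prime}):H^{\flat}\,]<\infty$, so $G^{\prime\prime}:=r^{-1}(H^{\flat})$ has finite index in $G^{\prime}$, hence in $G$, contains $H$, and composing $r|_{G^{\prime\prime}}$ with the inverse of the canonical isomorphism $H\xrightarrow{\ \sim\ }H^{\flat}$ yields a retraction $G^{\prime\prime}\to H$; this exhibits $H$ as a virtual retract of $G$. I expect the main obstacle to be precisely the local virtual-retract statement for finitely generated subgroups of limit groups over Droms RAAGs: \cite[Theorem 3.6]{Burillo} only yields cyclic subgroup separability, so one must verify that the hierarchical building blocks --- Droms RAAGs themselves and the amalgams over cyclic subgroups occurring in Proposition~\ref{Proposition 1} --- are virtually special (or at least admit retractions onto arbitrary finitely generated subgroups of finite-index overgroups), after which the retractions can be propagated up the hierarchy. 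The remaining work is bookkeeping: keeping the successive reductions to full subdirect products and to trivial centre compatible with the construction of $r$, as is done in \cite{Separability}.
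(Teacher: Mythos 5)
Your approach mirrors the paper's, which devotes only a short paragraph to Theorem~\ref{Theorem Retract}: it asserts that the reduction in \cite[Section 2]{Separability} goes through for residually Droms RAAGs and that Theorem~\ref{Theorem A} then plays the role that Theorem A of \cite{something1} plays in \cite[Section 3]{Separability}. Your more explicit reconstruction --- retract each factor onto $p_{i}(H)$, assemble the product retraction on a finite-index subgroup $G^{\prime}$ of $G$, and then use Theorem~\ref{Theorem C} together with the reductions of Proposition~\ref{Proposition 2} to see that the image of $H$ has finite index in the (reduced) image of $G^{\prime}$, so that the preimage of that image of $H$ gives the desired virtual retraction --- follows the shape of Bridson and Wilton's argument faithfully and is a reasonable fleshing-out of what the paper leaves implicit.

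The concern you raise at the end is exactly the right one, and the paper does not address it. Bridson and Wilton's argument relies on Wilton's theorem (Hall's theorem for limit groups, \cite{Wilton}) as the local input: finitely generated subgroups of limit groups over free groups are virtual retracts. Your middle paragraph invokes the Droms RAAG analogue of that statement when you assume each $H_{i}$ is a virtual retract of $\Gamma_{i}$, but this analogue is established nowhere in the paper. The only local separability statement the paper proves is Property~\ref{Property (4)} (cyclic subgroup separability via \cite[Theorem~3.6]{Burillo}), which is much weaker both in the class of subgroups it treats (cyclic, not all finitely generated) and in the conclusion it gives (separability, not virtual retraction). The paper lists \cite{Wilton} in its bibliography but never cites it, nor any Droms RAAG analogue of it, so the chain of justification has a hole precisely where you locate it: one must first verify --- by showing the relevant ICE groups are virtually special, or by a direct combination argument for retractions over cyclic amalgams, or by some other route --- that finitely generated subgroups of limit groups over Droms RAAGs are virtual retracts. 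Your proposal correctly isolates this as the missing local ingredient and is appropriately honest about leaving it open; as written, it is therefore an accurate reconstruction of the intended argument rather than a complete proof.
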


\begin{theorem}\label{Theorem Separability}
If $G$ is a finitely generated group that is residually a Droms RAAG and $H$ is a finitely presentable subgroup of $G$ then $H$ is separable in $G$.
\end{theorem}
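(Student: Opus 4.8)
The plan is to run the argument of Bridson and Wilton from \cite{Separability}, substituting for each appeal to a structural property of limit groups over free groups the corresponding property of limit groups over Droms RAAGs established above. The two structural facts doing the real work will be that, after passing to a finite-index subgroup, a finitely presented full subdirect product of limit groups over Droms RAAGs has all of the quotients $\Gamma_i/(S\cap\Gamma_i)$ virtually nilpotent (Theorem~\ref{Theorem Section 6}, together with Theorem~\ref{Theorem Section 10}), and that limit groups over Droms RAAGs are cyclic subgroup separable (Property~\ref{Property (4)}) and of type $F_\infty$ (Property~\ref{Property (6)}).

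First I would put $H$ into standard position. Being finitely presentable, $H$ is finitely generated, and a finitely generated subgroup of a group that is residually a Droms RAAG is again residually a Droms RAAG; so $H$ is itself a finitely presented group that is residually a Droms RAAG, and by \cite{Baumslag} it is a full subdirect product of limit groups over Droms RAAGs. Applying Theorem~\ref{Theorem Section 10} to $H$ we may take the embedding to be neat, say $H\hookrightarrow \Gamma_{0}\times\cdots\times\Gamma_{n}$ with $\Gamma_{0}$ free abelian, $H\cap\Gamma_{0}$ of finite index in $\Gamma_{0}$, each $\Gamma_{i}$ ($i\ge 1$) of trivial centre, and each projection of $H$ to $\Gamma_{i}\times\Gamma_{j}$ of finite index. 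Replacing each $\Gamma_{i}$ by a suitable finite-index subgroup as in the proof of Proposition~\ref{Proposition 2} (which only changes $H$ by finite index) I may also assume that each $\Gamma_{i}$ with $i\ge 1$ splits as a HNN extension over a cyclic or trivial subgroup with stable letter in $H\cap\Gamma_{i}$. Now $H$ is finitely presentable, hence so is the image $\overline H$ of $H$ in $\Gamma_{1}\times\cdots\times\Gamma_{n}$, which is a quotient of $H$ by the finitely generated normal subgroup $H\cap\Gamma_{0}$ and is a full subdirect product with $\overline H\cap\Gamma_{i}=L_{i}:=H\cap\Gamma_{i}$; since $\overline H$ and all of its finite-index subgroups are of type $FP_{2}(\mathbb{Q})$, Theorem~\ref{Theorem Section 6} applies and shows $\Gamma_{i}/L_{i}$ is virtually nilpotent for $i\ge 1$, while $\Gamma_{0}/L_{0}$ is finite. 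So $\Gamma_{i}/L_{i}$ is virtually nilpotent for every $i$.

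Next I would deduce separability of $H$ in its ambient product $D=\Gamma_{0}\times\cdots\times\Gamma_{n}$. Put $L=L_{0}\times\cdots\times L_{n}$. One always has $L\le H$ (each $L_{i}$ lies in $H$ and these generate $L$ in $D$), and $L$ is normal in $D$ with $D/L\cong\prod_{i}(\Gamma_{i}/L_{i})$ finitely generated and virtually nilpotent, hence subgroup separable. Since $H/L$ is a finitely generated subgroup of $D/L$ it is separable there, and pulling back finite quotients along $D\to D/L$ shows $H$ is separable in $D$, hence in every subgroup of $D$ that contains it. To finish one must reconcile this neat embedding of $H$ with an embedding of $G$ into a (possibly different) product of limit groups over Droms RAAGs provided by \cite{Baumslag}: following \cite{Separability}, an induction on the number of factors reduces the separations required in the individual limit-group factors to subgroups that are of finite index, or normal with residually finite quotient, or virtually cyclic — the first two cases need nothing beyond what has been used, the third is Property~\ref{Property (4)}, and the homological inputs of the induction are covered by Property~\ref{Property (6)} — so that $H$ is separable in $G$. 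Finally, the subgroup of $H$ actually used has finite index in the original $H$, and a finite-index overgroup of a separable subgroup is again separable (being a finite union of cosets of a subgroup closed in the profinite topology of $G$), so the original $H$ is separable in $G$.

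The step I expect to be the main obstacle is this last reconciliation. The structure theory of Section~\ref{Section 10} and Theorem~\ref{Theorem Section 6} describe $H$, not $G$, so one has to argue that the finite quotients of $D$ that witness separability of $H$ descend from finite quotients of $G$, and check that in our setting the only subgroup separations of the limit-group factors that the argument of \cite{Separability} actually requires are of virtually cyclic subgroups or of normal subgroups with virtually nilpotent quotient — so that the weaker Property~\ref{Property (4)}, rather than full subgroup separability of limit groups (which is not available here), suffices. Carrying this bookkeeping out carefully, using the virtually nilpotent quotient structure to control each factor, is the heart of the proof.
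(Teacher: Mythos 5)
Your high-level plan — run the Bridson--Wilton argument with Droms-RAAG substitutions, using Theorem~\ref{Theorem Section 6} for the virtually nilpotent quotient and the structure theory of Section~\ref{Section 10} to normalize the embedding of $H$ — agrees with the paper. The gap lies precisely in the reconciliation step you flag: separating a point that does not already lie in $p_1(H)\times\cdots\times p_n(H)$, i.e.\ separating the finitely generated projection $p_i(H)$ inside each factor $\Gamma_i$. You propose that cyclic subgroup separability (Property~\ref{Property (4)}) suffices for this, but $p_i(H)$ is in general nowhere near cyclic, so Property~\ref{Property (4)} is the wrong tool and by itself leaves a genuine hole.

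What Bridson--Wilton use at this point is Wilton's result that finitely generated subgroups of limit groups over free groups are virtual retracts and hence separable; the paper's substitute is Theorem~\ref{Theorem Retract}, stated immediately before the present theorem, which itself rests on Theorem~\ref{Theorem A}. Since $p_i(H)$ is a finitely generated subgroup of the limit group $\Gamma_i$, it is again a limit group over a Droms RAAG (Property~\ref{Property (2)}) and of type $F_\infty$ (Property~\ref{Property (6)}), so Theorem~\ref{Theorem Retract} applies to make it a virtual retract of $\Gamma_i$, hence separable. Thus the separability you worry is ``not available here'' is in fact available — via Theorem~\ref{Theorem Retract} rather than Property~\ref{Property (4)} — and this is exactly what the paper means when it notes that \cite[Theorem A]{Separability} is replaced by Theorem~\ref{Theorem A} in the reduction. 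Your instinct that this step is where the real work hides was right; the fix is to invoke Theorem~\ref{Theorem Retract} there. (A minor phrasing issue: you speak of finite quotients of $D$ ``descending'' to $G$, but that direction is automatic; the true issue is that the neat ambient product for $H$ need not contain $G$, which is what the virtual-retract separation of $p_i(H)$ in the original factors resolves.)
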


These two results are the analogous of \cite[Theorem A]{Separability} and \cite[Theorem B]{Separability}.

In \cite[Section 2]{Separability}, \cite[Theorem A]{Separability} and \cite[Theorem B]{Separability} are reduced to the case where $G$ is a direct product of limit groups over free groups and $H<G$ is a subdirect product. In our case, the same reduction can be applied since finitely generated residually Droms RAAGs embed as subdirect products in direct products of limit groups over Droms RAAGs. 

It is then shown, in \cite[Section 3]{Separability}, that \cite[Theorem A]{Separability} follows from \cite[Theorem A]{something1}. In our case, this is analogous to using Theorem~\ref{Theorem A}.

Finally, \cite[Theorem B]{Separability} requires \cite[Theorem 4.2]{something1}, which in the case of limit groups over Droms RAAGs corresponds to Theorem~\ref{Theorem Section 6}.

\end{document}